\title
{Pointwise convergence of bilinear polynomial averages over the primes}
\author[B. Krause]{Ben Krause}
\address{BK: Department of Mathematics, University of Bristol, BS8 1QU, UK}
\email{ben.krause@bristol.ac.uk}
\author[H. Mousavi]{Hamed Mousavi}
\address{HM: Department of Mathematics, University of Bristol, BS8 1QU, UK}
\email{gj23799@bristol.ac.uk}
\author[T. Tao]{Terence Tao}
\address{TT: Department of Mathematics, University of California, Los Angeles, CA 90095-1555,USA}
\email{tao@math.ucla.edu}
\author[J. Ter\"{a}v\"{a}inen]{Joni Ter\"{a}v\"{a}inen}
\address{Department of Pure Mathematics and Mathematical Statistics, University of Cambridge, Cambridge CB3 0WB, UK}
\email{joni.p.teravainen@gmail.com}
\subjclass[2020]{37A30, 37A44, 37A46, 11B30}
\newcommand{\R}{\mathbb{R}}
\newcommand{\N}{\mathbb{N}}
\newcommand{\C}{\mathbb{C}}
\newcommand{\Z}{\mathbb{Z}}
\newcommand{\T}{\mathrm{T}}
\newcommand{\Q}{\mathbb{Q}}
\newcommand{\A}{\mathbb{A}}
\newcommand{\V}{\mathbf{V}}
\newcommand{\E}{\mathbb{E}}
\newcommand{\n}{\mathrm{n}}
\newcommand{\D}{\mathbb{D}}
\newcommand{\I}{\mathbb{I}}
\newcommand{\Height}{\mathrm{h}}
\newcommand{\Avg}{\mathrm{A}}
\newcommand{\B}{\mathrm{B}}
\newcommand{\M}{\mathcal{M}}
\newcommand{\F}{\mathcal{F}}
\newcommand{\eps}{\varepsilon}
\newcommand{\Log}{\operatorname{Log}}
\newcommand{\Cramer}{\operatorname{Cram\acute{e}r}}
\newcommand{\HB}{\operatorname{HB}}
\newcommand{\ind}[1]{\mathds{1}_{{#1}}}
\renewcommand{\mod}{{\ \mathrm{mod}\ }}
\let\oldpmod\pmod
\renewcommand{\pmod}[1]{\hspace{-0.12cm}\oldpmod {#1}}
\theoremstyle{plain}
\newtheorem{theorem}{Theorem}[section]
\newtheorem{proposition}[theorem]{Proposition}
\newtheorem{lemma}[theorem]{Lemma}
\newtheorem{corollary}[theorem]{Corollary}
\theoremstyle{remark}
\newtheorem*{remark}{Remark}
\numberwithin{equation}{section}
\begin{document}

\begin{abstract}
We show that on a $\sigma$-finite measure preserving system $X = (X,\nu, T)$, the non-conventional ergodic averages
$$ \E_{n \in [N]} \Lambda(n) f(T^n x) g(T^{P(n)} x)$$
converge pointwise almost everywhere for $f \in L^{p_1}(X)$, $g \in L^{p_2}(X)$, and $1/p_1 + 1/p_2 \leq 1$, where $P$ is a polynomial with integer coefficients of degree at least $2$.  This had previously been established with the von Mangoldt weight $\Lambda$ replaced by the constant weight $1$ by the first and third authors with Mirek, and by the M\"obius weight $\mu$ by the fourth author.  The proof is based on combining tools from both of these papers, together with several Gowers norm and polynomial averaging operator estimates on approximants to the von Mangoldt function of ``Cram\'er'' and ``Heath-Brown'' type.
\end{abstract}

\maketitle

\section{Introduction}

Throughout this paper,  $P \in \Z[\n]$ denotes a polynomial with integer coefficients of some degree $d \geq 2$ in one indeterminate $\n$; a typical case to keep in mind is the quadratic polynomial $P(\n) = \n^2$.

Define a \emph{measure-preserving system} to be a triple $X = (X,\nu,T)$, where $X = (X,\nu)$ is a $\sigma$-finite measure space, and $T \colon X \to X$ is an invertible bimeasurable map which is measure-preserving in the sense that $\nu(T^{-1}(E)) = \nu(E)$ for all measurable $E$.  It is common in the literature to restrict to finite measure systems, and to normalize $\nu(X)=1$; but our results will not require any hypothesis of finite measure. Given functions $f,g \colon X \to \C$, a scale $N \geq 1$, and a weight function $w \colon \N \to \C$, we can then define the non-conventional averaging operator
$$ \Avg_{N, w; X}(f,g)(x) \coloneqq \E_{n \in [N]} w(n) f(T^n x) g(T^{P(n)} x)$$
for any $x \in X$ (see Section \ref{notation-sec} for our averaging notation).

\subsection{Unweighted ergodic averages}

In the unweighted case $w=1$, the following ergodic theorem was recently proven by two of the authors with Mirek.

\begin{theorem}[Unweighted ergodic theorem]\label{unweight-thm}\cite[Theorem 1.17]{KMT0} Let $(X,\nu,T)$ be a measure-preserving system, and let $f \in L^{p_1}(X)$, $g \in L^{p_2}(X)$ for some $1 < p_1,p_2 < \infty$ with $\frac{1}{p_1} + \frac{1}{p_2} = \frac{1}{p} \leq 1$.
\begin{itemize}
\item[(i)] (Mean ergodic theorem)  The averages $\Avg_{N,1;X}(f,g)$ converge in $L^p(X)$ norm.
\item[(ii)] (Pointwise ergodic theorem)  The averages $\Avg_{N,1;X}(f,g)$ converge pointwise almost everywhere.
\item[(iii)] (Maximal ergodic theorem)  One has
$$ \| (\Avg_{N,1;X}(f,g))_{N \in \Z^+} \|_{L^p(X; \ell^\infty)} \lesssim_{p_1,p_2,P} \|f\|_{L^{p_1}(X)} \|g\|_{L^{p_2}(X)}$$
(see Section \ref{asymptotic-sec} for our asymptotic notation conventions).
\item[(iv)] (Variational ergodic theorem)  If $r>2$ and $\lambda>1$, one has
$$ \| (\Avg_{N,1;X}(f,g))_{N \in \D} \|_{L^p(X; \V^r)} \lesssim_{p_1,p_2,r,P,\lambda} \|f\|_{L^{p_1}(X)} \|g\|_{L^{p_2}(X)}$$
whenever $\D \subset [1,+\infty)$ is finite and $\lambda$-lacunary (see Section \ref{var-def} for the definition of $\lambda$-lacunarity and the variational norm $\V^r$).
\end{itemize}
\end{theorem}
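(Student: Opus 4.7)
The plan is to focus on establishing part (iv), the variational ergodic theorem, since it formally implies the remaining three: an $L^p(X; \V^r)$-bound with finite $r$ yields almost-everywhere convergence along any $\lambda$-lacunary sequence, and a short-variation argument between consecutive lacunary scales upgrades this to the full pointwise convergence (ii); the maximal inequality (iii) is dominated by the variation norm, and the mean convergence (i) follows by density together with the maximal bound.

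The first reduction is the Calder\'on transference principle, which replaces an arbitrary $\sigma$-finite measure-preserving system $(X,\nu,T)$ by the integer shift system on $\Z$ with counting measure. One is then tasked with proving
\[
\bigl\| (\Avg_{N,1;\Z}(f,g))_{N \in \D} \bigr\|_{\ell^p(\Z; \V^r)} \lesssim \|f\|_{\ell^{p_1}(\Z)} \|g\|_{\ell^{p_2}(\Z)}
\]
for all finite $\lambda$-lacunary $\D \subset [1,\infty)$. The proof then proceeds via a Hardy--Littlewood circle-method decomposition of the Fourier multiplier associated with the linear coordinate $n$: choosing a slowly growing threshold $Q = Q(N)$, one splits the torus into \emph{major arcs} (short neighbourhoods of rationals $a/q$ with $q \leq Q$) and \emph{minor arcs} (their complement), and analyses the two contributions separately.

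The minor arc contribution is the main analytic obstacle. Here I would use a bilinear Weyl/van der Corput argument combined with a PET-type reduction \`a la Bergelson--Leibman and quantitative control by $U^s$-Gowers norms of the inputs on intervals of length $N$; exploiting that $\deg P \geq 2$ forces genuine polynomial oscillation, one extracts an $\ell^{p_1} \times \ell^{p_2} \to \ell^p$ bound with a power saving in $N$ strong enough to be summed into the variation norm. The hardest part is extracting a \emph{bilinear} (not merely single-function Weyl) gain, uniform in $N$ and across the full Banach triangle $1/p_1+1/p_2 \leq 1$.

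The major arc contribution is approximated by a superposition over $a/q$ of Gauss-sum coefficients multiplied by a continuous bilinear averaging operator acting on rationally-modulated inputs; for this continuous model one obtains $\V^r$ estimates via an Ionescu--Wainger multiplier theorem, a Rademacher--Menshov long-variation argument along lacunary scales, and short-variation square-function estimates between consecutive scales (available since $r > 2$), with summation over $a/q$ controlled by standard bounds on complete Gauss sums. Combining the major and minor arc contributions yields the desired variational estimate, and transferring back to $(X,\nu,T)$ completes the proof.
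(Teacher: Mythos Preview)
Your high-level architecture (reduce to (iv); transfer to $\Z$; circle-method split; Ionescu--Wainger and Rademacher--Menshov on the major arcs) matches the paper's review of the \cite{KMT0} proof. However, your minor-arc plan has a genuine gap.

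You propose to control the minor arcs by ``a bilinear Weyl/van der Corput argument combined with a PET-type reduction \`a la Bergelson--Leibman and quantitative control by $U^s$-Gowers norms of the inputs''. A PET/van der Corput iteration would indeed bound the form by a $U^s$-norm of $f$ or $g$ for some $s$ depending on $\deg P$, but this is not what is needed: to feed into the major-arc machinery one must show that if the bilinear form is large then $f$ (respectively $g$) correlates with a \emph{single} Fourier phase with small denominator, i.e.\ a $U^1$-type inverse statement with polynomial-in-$\delta$ losses. Passing from $U^s$-control to this conclusion is precisely the Peluse--Prendiville \emph{degree-lowering} argument, which exploits the specific shape $(n,P(n))$ with $\deg P\ge 2$ to show the pattern has \emph{true complexity zero}. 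Without this step, PET alone gives no usable quantitative inverse theorem, and your minor-arc estimate \eqref{single} cannot be established. The paper is explicit that the single-scale estimate comes from ``Peluse--Prendiville estimates \cite{PP} with a discrete $\ell^p$ improving inequality from \cite{koselacey}, together with a Hahn--Banach argument''; none of these ingredients appear in your plan.

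Your major-arc sketch is also too thin. After approximating by bilinear model operators, \cite{KMT0} must split the lacunary scales into a \emph{small-scale} regime (handled by a two-parameter Rademacher--Menshov inequality and shifted Calder\'on--Zygmund theory) and a \emph{large-scale} regime, where the problem is transferred to the adelic integers $\A_\Z=\R\times\hat\Z$ and ultimately reduced to $L^2(\Z_p)\times L^2(\Z_p)\to L^q(\Z_p)$ bounds for a $p$-adic averaging operator, with the operator norm required to be \emph{exactly} $1$ for large primes. Your proposal to sum over $a/q$ ``controlled by standard bounds on complete Gauss sums'' does not account for this, and a naive summation would diverge (the product $\prod_p(1+O(1/p))$ is infinite).
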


We very briefly review the main ingredients of the proof of Theorem \ref{unweight-thm}.  Part (iv) is the main estimate, which easily implies the other three claims.  By some standard sparsification and transference arguments, as well as dyadic decompositions, it sufficed to prove the variant estimate
$$ \| (\tilde \Avg_{N,1}(f,g))_{N \in \D} \|_{\ell^p(\Z; \V^r)} \lesssim_{p_1,p_2,r,P,\lambda} \|f\|_{\ell^{p_1}(\Z)} \|g\|_{\ell^{p_2}(\Z)}$$
where
\begin{equation}\label{avg-upper-def}
    \tilde \Avg_{N, w}(f,g)(x) \coloneqq \E_{n \in [N]} w(n) f(x+n) g(x+P(n)) \ind{n>N/2}
\end{equation}
is the ``upper half'' of $\Avg_{N,w;X}$ when $X$ is the integers $\Z$ with the usual shift $T \colon n \mapsto n+1$ and counting measure $\nu$.

A crucial observation was that the averages $\tilde \Avg_{N,1}$ are ``complexity zero'' in the sense that they are small when the Fourier transform of $f$ or $g$ vanish on ``major arcs''.  Indeed, in \cite[Theorem 5.12]{KMT0} the single-scale minor arc estimate
\begin{equation}\label{single}
     \| \tilde \Avg_{N,1}(f,g) \|_{\ell^1(\Z)} \lesssim_{C_1} ( 2^{-cl} + \langle \Log N \rangle^{-cC_1} ) \|f\|_{\ell^2(\Z)} \|g\|_{\ell^2(\Z)}
\end{equation}
was proven for $N \geq 1$, $l \in \N$, and $f,g \in \ell^2(\Z)$ with either the Fourier transform $\F_\Z f$ of $f$ vanishing on the major arc set $\M_{\leq l, \leq -\Log N + l}$ or the Fourier transform $\F_\Z g$ of $g$ vanishing on the major arc set $\M_{\leq l, \leq -d\Log N + dl}$; we refer the reader to Section \ref{notation-sec} for the definition of the various terms and symbols introduced here.  This minor arc estimate was proven by combining Peluse--Prendiville estimates \cite{PP} with a discrete $\ell^p$ improving inequality from \cite{koselacey}, together with a Hahn--Banach argument.

Using \eqref{single}, one could now focus attention to major arcs.  After some routine manipulations involving Ionescu--Wainger multiplier theory \cite{IW}, the task reduced to controlling the $\ell^p(\Z; \V^r)$ norm of tuples of the form
\begin{equation}\label{afg-1}
     (\tilde \Avg_{N,1}(F_N, G_N))_{N \in \I}
\end{equation}
where $\I$ is a certain $\lambda$-lacunary set (bounded from below by certain bounds, but not from above), and $F_N, G_N$ are various frequency localizations of $f,g$ respectively to major arcs (see \cite[Theorem 5.30]{KMT0} for a precise statement).  By estimation of the bilinear symbol of the averaging operator $\tilde \Avg_{N,1}$, one could approximate this tuple by another tuple
\begin{equation}\label{afg-2} 
    (\B^{l_1, l_2, m_{\hat \Z}}_{(\varphi_N \otimes \tilde \varphi_N) \tilde m_{N,\R}}(F,G) )_{N \in \I}
\end{equation}
where $F,G$ are again some Fourier localizations of $f,g$ to major arcs, and $\B^{l_1, l_2, m_{\hat \Z}}_{(\varphi_N \otimes \tilde \varphi_N) \tilde m_{N,\R}}$ is a certain bilinear Fourier multiplier adapted to major arcs; see \cite[Proposition 7.13]{KMT0} for a precise statement.  At this stage it became necessary to split the set $\I$ of spatial averaging scales into the small scales $\I_{\leq}$ and large scales $\I_{>}$.  For the small scales, one could reduce matters to controlling another tuple
$$ (\B^{l_1,l_2, m_{\hat \Z}}_{m_*}(\T^{l_1}_{\varphi_{N,t,j_1}} F, \T^{l_2}_{\tilde{\varphi}_{N,t,j_2}} G ))_{N \in \I_{\leq}}$$
for another bilinear Fourier multiplier $B^{l_1,l_2, m_{\hat \Z}}_{m_*}$ and Fourier multipliers $T^{l_1}_{\varphi_N,t,j_1}$, $T^{l_2}_{\tilde{\varphi}_N,t,j_2}$, while for the large scales one instead considered tuples of the form
$$ (\B_{1 \otimes m_{\hat \Z}}( \T_{\varphi_{N,t,j_1} \otimes 1} F_\A, \T_{\tilde{\varphi}_{N,t,j_2} \otimes 1} G_\A ))_{N \in \I_{>}}$$
where $F_\A, G_\A$ were now defined on the ring $\A_\Z = \R \times \hat \Z$ of adelic integers rather than on the integers $\Z$.  See \cite[Theorem 7.28]{KMT0} for a precise statement of the estimates required on these tuples.

In the small-scale case, it was possible to apply a general two-parameter Radamacher--Menshov inequality \cite[Corollary 8.2]{KMT0} followed by some shifted Calder\'on--Zygmund theory \cite[Theorem B.1]{KMT0} to reduce matters to obtaining a good $\ell^{p_1}(\Z) \times \ell^{p_2}(\Z) \to \ell^p(\Z)$ estimate for the bilinear multiplier $\B^{l_1,l_2, m_{\hat \Z}}_{m_*}$ (see \cite[Lemma 8.6]{KMT0}), which was ultimately proven with the assistance of the minor arc estimate \eqref{single} and the approximation result in \cite[Proposition 7.13]{KMT0}.

In the large scale case, some interpolation and factorization arguments, together with a version of \eqref{single} on the profinite integers $\hat \Z$, reduced matters to establishing $L^2(\Z_p) \times L^2(\Z_p) \to L^q(\Z_p)$ bounds on the $p$-adic averaging operator
\begin{equation}\label{avg-zp}
 \Avg_{\Z_p}(f,g)(x) \coloneqq \E_{n \in \Z_p} f(x+n) g(x+P(n))
\end{equation}
for all primes $p$ and some $q>2$, with the operator norm required to be bounded by $1$ for $p$ large enough; see \cite[(10.3), (10.4)]{KMT0} for a precise statement.  The boundedness ultimately came from some distributional analysis of the level sets of $P$ on the $p$-adics (see \cite[Corollary C.2]{KMT0}); getting the bound of $1$ for large $p$ required some additional refined analysis in which one again uses (a $p$-adic version of) the minor arc estimate \eqref{single}.

\subsection{M\"obius-weighted ergodic averages}

More recently, another one of us \cite{joni} considered the non-conventional averaging operators $\Avg_{N,\mu;X}$ weighted by the M\"obius function $\mu$ instead of $1$.  Perhaps counter-intuitively, the convergence of ergodic averages weighted by $\mu$ is actually \emph{better} than that of the unweighted case, especially in light of the recent progress on quantitative Gowers uniformity of the M\"obius function \cite{gt-mobius, TT, leng-quadratic, leng-equidistribution, lss}.  For instance, as a special case of \cite[Theorem 1.2]{joni}, the following result was shown.

\begin{theorem}[M\"obius-weighted ergodic theorem]\label{mobius-ergodic}  Let  $X$ have finite measure, $f \in L^{p_1}(X)$, $g \in L^{p_2}(X)$ with $\frac{1}{p_1} + \frac{1}{p_2} < 1$, and let $A>0$.  Then
\begin{equation}\label{nnfg}
     \lim_{N \to\infty} (\log^A N) \Avg_{N,\mu;X}(f,g) = 0
\end{equation}
pointwise almost everywhere.
\end{theorem}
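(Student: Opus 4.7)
The plan is to combine a quantitative single-scale decay estimate for M\"obius-weighted bilinear polynomial averages with a Borel--Cantelli argument to produce pointwise almost everywhere decay. First, by Calder\'on's transference principle (using $|\mu|\leq 1$ so no positivity is needed), it suffices to prove on $\Z$ (with the standard shift and counting measure) that, for every $B > 0$,
$$\bigl\|\tilde \Avg_{N, \mu}(f, g)\bigr\|_{\ell^p(\Z)} \lesssim_B (\log N)^{-B} \|f\|_{\ell^{p_1}(\Z)} \|g\|_{\ell^{p_2}(\Z)},$$
where $\tilde \Avg_{N, \mu}$ is the truncated operator from \eqref{avg-upper-def}. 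Once this is established, Markov's inequality applied along a dyadic sequence $N_k = 2^k$ produces summable measures of bad sets, and Borel--Cantelli yields $(\log^A N_k) \Avg_{N_k, \mu; X}(f, g) \to 0$ almost everywhere. Intermediate scales $N \in [N_k, N_{k+1}]$ are handled by the decomposition $\Avg_{N,\mu;X} = (N_k/N)\Avg_{N_k,\mu;X} + R_N$ and by applying the same machinery to the tail $R_N$ over the short window $(N_k, N]$, with a trivial estimate controlled by Theorem~\ref{unweight-thm}(iii) for the very shortest windows.

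The heart of the argument is thus the single-scale $\ell^p$ estimate, which I would prove in two steps. The first is a bilinear polynomial Gowers-norm inverse theorem: via a PET degree-lowering induction in the spirit of Peluse--Prendiville (which also underlies the minor arc estimate \eqref{single}), one should show that for any bounded weight $w$,
$$\bigl\|\tilde \Avg_{N, w}(f,g)\bigr\|_{\ell^p(\Z)} \lesssim \bigl\|w \cdot \ind{[N/2,N]}\bigr\|_{U^s[N]}^c \, \|f\|_{\ell^{p_1}(\Z)} \|g\|_{\ell^{p_2}(\Z)}$$
up to acceptable logarithmic losses, for some $s = s(\deg P)$ and some $c = c(p_1, p_2) > 0$. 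The strict inequality $\tfrac{1}{p_1} + \tfrac{1}{p_2} < 1$ provides the slack needed to interpolate an $L^\infty \times L^\infty \to L^\infty$ version of the inverse theorem against the maximal bound of Theorem~\ref{unweight-thm}(iii). The second step invokes the quantitative Gowers uniformity of the M\"obius function established in \cite{gt-mobius, TT, leng-quadratic, leng-equidistribution, lss}: for every fixed $s$ and every $C > 0$,
$$\bigl\|\mu \cdot \ind{[N/2,N]}\bigr\|_{U^s[N]} \lesssim_C (\log N)^{-C}.$$
Choosing $C$ sufficiently large in terms of $B$ and $c$ closes the estimate.

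The principal obstacle is the weighted bilinear $U^s$-control in the first step. Extending the Peluse--Prendiville inverse machinery to the weighted setting, with a genuinely polynomial gain in the Gowers norm of $w$ (rather than a merely qualitative Bergelson--Leibman-type control), and in the mixed $\ell^{p_1} \times \ell^{p_2} \to \ell^p$ rather than purely $\ell^\infty$ regime, requires carefully tracking constants through the PET iteration: the gain exponent $c$ must be an honest positive number, and the logarithmic losses accumulated along the iteration must be polynomially bounded in $\log N$, so that they can be absorbed into the superpolynomial decay coming from the Gowers uniformity of $\mu$. The finite-measure hypothesis on $X$ plays no essential role beyond enabling the interpolation between distinct $L^p$ spaces, while the strictness of $\frac{1}{p_1} + \frac{1}{p_2} < 1$ is what makes this interpolation actually deliver the superpolynomial decay at the target exponent.
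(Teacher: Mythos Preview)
Your outline is broadly correct and parallels the paper's summary of the argument from \cite{joni}: a single-scale quantitative bound on $\Z$ controlling the weighted average by a uniformity norm of the weight, the known superpolylogarithmic decay of that norm for $\mu$, then transference and Borel--Cantelli/sparsification. The substantive difference is the choice of norm and how the controlling inequality is obtained. The paper uses the \emph{little} Gowers norm $u^{d+1}[N]$ of \eqref{little-gowers} rather than the full $U^s$, and the controlling inequality is simply the off-the-shelf bound \eqref{unif-est} from \cite[Theorem~4.1]{joni}, which comes out of Peluse's degree-lowering inverse theorem: a large average forces the weight $\theta$ to correlate with a single polynomial phase, and that is exactly what $u^{d+1}$ detects. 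So the ``principal obstacle'' you flag --- setting up a weighted PET iteration that delivers honest polynomial control in $\|w\|_{U^s}$ with only polylogarithmic losses --- is bypassed entirely; control in the much weaker $u^{d+1}$ norm suffices and is already established. Correspondingly, the input $\|\mu\|_{u^{d+1}[N]}\lesssim_A(\log N)^{-A}$ is just the classical exponential-sum bound for $\sum_{n\le N}\mu(n)e(Q(n))$ with polynomial $Q$ (for which the citation of \cite{gt-mobius} is already more than enough), and none of the deep quantitative $U^s$-inverse machinery of \cite{TT,leng-equidistribution,lss} is required here. Your interpolation against the unweighted maximal inequality using the slack $\tfrac{1}{p_1}+\tfrac{1}{p_2}<1$, and the dyadic Borel--Cantelli passage to all scales, are what the paper means by ``standard sparsification and transference arguments''.
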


The ingredients used to prove Theorem \ref{mobius-ergodic} are somewhat different from those used to prove Theorem \ref{unweight-thm}; a key input was \cite[Theorem 4.1]{joni}, which in our context establishes the bound
\begin{equation}\label{unif-est}  
    | \E_{x \in [-CN^d, CN^d]} \Avg_{N,\theta;\Z}(f,g)(x) h(x) | \lesssim_{C,P} (N^{-1} + \|\theta\|_{u^{d+1}[N]})^{1/K} \end{equation}
for all $1$-bounded $f,g,h, \theta$ and some $1 \leq K \lesssim_d 1$, where the ``little'' Gowers uniformity norm $\|\theta\|_{u^{d+1}[N]}$ is defined as
\begin{equation}\label{little-gowers} \|\theta\|_{u^{d+1}[N]} \coloneqq \sup_{\deg Q \leq d} |\E_{n \in [N]} \theta(n) e(-Q(n))| \end{equation}
where $Q$ ranges over all polynomials of degree at most $d$ with real coefficients, and $e(x) \coloneqq e^{2\pi i x}$.  The results of \cite{gt-mobius} show that
$\|\mu\|_{u^{d+1}[N]}$ decays faster than any power of $\log N$, and the claim then follows by standard sparsification and transference arguments.

\subsection{Prime-weighted ergodic averages}

In this paper we combine the methods of \cite{joni} and \cite{KMT0}, together with some additional arguments, to obtain a non-conventional ergodic theorem in which the weight is selected to be the von Mangoldt function $\Lambda$, defined by
\begin{align*}
\Lambda(n)=\begin{cases}\log p,\quad n \textnormal{ is a power of a prime } p,\\
0,\quad \quad \,\,\,\textnormal{otherwise}.\end{cases}    
\end{align*}
 More specifically, we show the following. 

\begin{theorem}[Main theorem]\label{main-thm}  Let $(X,\nu,T)$ be a measure-preserving system, and let $f \in L^{p_1}(X)$, $g \in L^{p_2}(X)$ for some $1 < p_1,p_2 < \infty$ with $\frac{1}{p_1} + \frac{1}{p_2} \leq 1$.  Then the averages $\Avg_{N,\Lambda;X}(f,g)$ converge pointwise almost everywhere.  In fact, one has the variational estimate
\begin{equation}\label{variational}
    \| (\Avg_{N,\Lambda;X}(f,g))_{N \in \D} \|_{L^p(X; \V^r)} \lesssim_{p_1,p_2,p,r,P,\lambda} \|f\|_{L^{p_1}(X)} \|g\|_{L^{p_2}(X)}
\end{equation}
whenever $\lambda > 1$, $p \geq 1$ and $r>2$ with $\frac{1}{p_1} + \frac{1}{p_2} = \frac{1}{p}$, and $\D\subset [1,+\infty)$ is finite and $\lambda$-lacunary.
\end{theorem}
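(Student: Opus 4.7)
The plan is to layer the strategies of \cite{KMT0} and \cite{joni}: write the von Mangoldt weight as a structured approximant plus a Gowers-uniform remainder, handle the approximant by the bilinear ergodic machinery already developed in \cite{KMT0}, and control the remainder by the polynomial averaging bound \eqref{unif-est} combined with the quantitative Gowers-uniformity estimates for $\Lambda$. After the usual sparsification and transference reductions, the task becomes the $\Z$-analog of \eqref{variational} for the truncated operators $\tilde\Avg_{N,\Lambda}$ of \eqref{avg-upper-def} at $\lambda$-lacunary scales $N \in \D$.

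I would decompose $\Lambda = \Lambda^\sharp + r$, where $\Lambda^\sharp$ is a Cram\'er-type approximant such as $\Lambda_{\Cramer,w}(n) = \frac{W}{\phi(W)} \ind{(n,W)=1}$ with $W = \prod_{p\le w} p$ and $w$ a slowly growing parameter, augmented where needed by a Heath-Brown-type local model $\Lambda_{\HB}$ that matches $\Lambda$ on short progressions. For the residue $r = \Lambda - \Lambda^\sharp$, the quantitative Gowers-uniformity results of \cite{gt-mobius,TT,leng-quadratic,leng-equidistribution,lss} show that $\|r\|_{u^{d+1}[N]}$ decays faster than any power of $\log N$; inserting this into \eqref{unif-est} (and dualizing in the cut-off $h$ to recover an operator norm from the trilinear bound) yields a super-polynomially decaying single-scale estimate of the form $\|\tilde\Avg_{N,r}(f,g)\|_{\ell^p} \lesssim_A (\log N)^{-A} \|f\|_{\ell^{p_1}}\|g\|_{\ell^{p_2}}$ for every $A>0$ after interpolation with the trivial $\ell^\infty$ bound. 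Since a $\lambda$-lacunary set $\D \cap [1,N]$ has cardinality $O(\log N)$, the trivial bound $\|( a_N )_N\|_{\V^r} \leq 2 \sum_N |a_N|$ already suffices to sum the variational contribution from $r$.

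For the structured weight $\Lambda^\sharp$, the approximant is periodic of period $W$ and hence ``complexity zero'' in the sense used in \cite{KMT0}. I would re-run the scheme sketched in the introduction: first, prove the Peluse--Prendiville-type minor-arc estimate analogous to \eqref{single} for $\tilde\Avg_{N,\Lambda^\sharp}$ (replacing $1$ by the periodic weight in the proof of \cite[Theorem 5.12]{KMT0} introduces only harmless $w$-dependent constants, since one can decompose into residues mod $W$ and apply the known $\ell^p$-improving and Hahn--Banach inputs); next, perform the Ionescu--Wainger major-arc decomposition and replace the resulting tuples as in \eqref{afg-1}--\eqref{afg-2}, now carrying a periodic symbol coming from $\Lambda^\sharp$; finally, split the spatial scales into small and large and invoke, respectively, the two-parameter Rademacher--Menshov plus shifted Calder\'on--Zygmund argument of \cite[\S 8]{KMT0}, and the adelic reduction to $p$-adic averaging bounds for \eqref{avg-zp} from \cite[\S 10]{KMT0}. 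For the finitely many small primes $p \le w$ the $p$-adic operator has to be replaced by its coprime-weighted variant $\E_{n \in \Z_p} \ind{(n,p)=1} f(x+n) g(x+P(n))$, a mild modification handled by the same level-set analysis.

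The main obstacle I anticipate is the calibration of $\Lambda^\sharp$: the parameter $w$ must be chosen large enough that $r$ genuinely has small $u^{d+1}[N]$ norm at every relevant scale (for which a pure Cram\'er approximant is inadequate at short scales and a Heath-Brown-type correction is required), yet small enough that the implied constants on the structured side, which depend on $W/\phi(W)$ and on the $w$-dependent Ionescu--Wainger major arc set, remain usable. Threading this needle, and in particular making the entire \cite{KMT0} major-arc/$p$-adic analysis quantitatively uniform in the periodic profile of $\Lambda^\sharp$, is where the novel Cram\'er and Heath-Brown estimates advertised in the abstract must be deployed, and is the principal technical heart of the proof.
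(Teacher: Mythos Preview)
Your overall architecture matches the paper's: split $\Lambda = \Lambda_N + (\Lambda - \Lambda_N)$ with a scale-dependent Cram\'er approximant $\Lambda_N = \Lambda_{\Cramer,w}$, $w = \exp(\Log^{1/C_0} N)$, dispose of the remainder via \eqref{unif-est} plus interpolation, and re-run the \cite{KMT0} machinery on the structured piece. However, two of your proposed implementations would not go through as written.

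The serious gap is your route to the minor-arc estimate for $\tilde\Avg_{N,\Lambda^\sharp}$. You propose to ``decompose into residues mod $W$'' and then quote the unweighted \cite[Theorem 5.12]{KMT0} on each progression. But with $w = \exp(\Log^{1/C_0} N)$ one has $W \approx e^w$, astronomically larger than any power of $N$; splitting into $\varphi(W)$ residue classes destroys any usable bound, and the constants are not ``harmless''. The paper's solution is genuinely different and is one of its main technical novelties: inside the proof of the weighted inverse theorem (Proposition~\ref{lip-wt}) one first uses the Gowers-norm \emph{stability} of the Cram\'er model (Lemma~\ref{cramer-stable}) together with \eqref{unif-est} to telescope $\Lambda_{\Cramer,w}$ down to $\Lambda_{\Cramer,w'}$ with $w' \lesssim \delta^{-O(1)}$ now small, and then invokes Proposition~\ref{comparison} to swap this for the Heath-Brown approximant $\Lambda_{\HB,w'}$, which \emph{is} a short Type~I sum $\sum_{d\mid n,\, d<w'}\lambda_d$. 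Only after expanding the Ramanujan sums in $\Lambda_{\HB,w'}$ and absorbing each phase $e(-rn/q)$ into $f$ and $h$ does one land back on the unweighted Proposition~\ref{lip}. The Cram\'er-to-Heath-Brown switch is essential precisely because $\Lambda_{\Cramer,w}$ is not itself a short Type~I sum, so your ``residues mod $W$'' shortcut is not available.

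A second, smaller misconception is in the $p$-adic step: the coprime-weighted operator $\Avg_{\Z_p^\times}$ does not arise only for ``the finitely many small primes $p \le w$'' but for \emph{every} prime, since the arithmetic symbol becomes $m_{\hat\Z^\times}(\alpha_1,\alpha_2) = \E_{n \in (\Z/q\Z)^\times} e(\alpha_1 n + \alpha_2 P(n))$ and the adelic factorization restricts to $\Z_p^\times$ at all $p$. The delicate issue the paper addresses is that one needs $\|\Avg_{\Z_p^\times}\|_{L^2 \times L^2 \to L^q} \le 1$ \emph{exactly} (not $1 + O(1/p)$) for large $p$, because these constants are multiplied over all primes; the naive comparison $|\Avg_{\Z_p^\times}| \le \frac{p}{p-1}\Avg_{\Z_p}$ is therefore insufficient, and a refinement of the energy-decrement argument from \cite[\S 10]{KMT0} is required.
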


The range of $r$ here is optimal, as will be mentioned in Subsection~\ref{sec:sharpness}. It is possible to extend the the range of $(p_1,p_2)$ slightly beyond duality, see the discussion in Subsection~\ref{sec:duality}. 

Using the fact that $\log n=\log N+O(\log M)$ for $n\in [N/M,N]$ and the prime number theorem, we have the following immediate corollary to Theorem~\ref{main-thm}.

\begin{corollary}\label{main-cor}
Let the assumptions be as in Theorem~\ref{main-thm}. Then the prime-weighted averages
$$ \frac{1}{N/\log N} \sum_{p \leq N} f(T^p x) g(T^{P(p)} x)$$
converge pointwise almost everywhere. 
\end{corollary}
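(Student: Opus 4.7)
The plan is to deduce Corollary~\ref{main-cor} from Theorem~\ref{main-thm} by two comparisons: first, strip off the negligible contribution of proper prime powers from $\Lambda$ to reduce to the sum over primes with weights $\log p$; then apply Abel summation to convert the $\log p$-weights into the normalization $1/(N/\log N)$, absorbing the prime number theorem $\pi(N) = (1+o(1))N/\log N$ into this normalization. Throughout, write $F_n(x) := f(T^n x)g(T^{P(n)}x)$ and let $L(x) := \lim_N \Avg_{N,\Lambda;X}(f,g)(x)$, whose a.e.\ existence is the content of Theorem~\ref{main-thm}.

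For the first step, write $\Avg_{N,\Lambda;X}(f,g)(x) = \tilde A_N(x) + E_N(x)$ with
$$\tilde A_N(x) := \frac{1}{N}\sum_{p \leq N}(\log p)\,F_p(x),$$
so that $E_N$ collects the contribution from $n = p^k$, $k \geq 2$. Since the set of such $n$ up to $N$ has cardinality $O(\sqrt N)$, each weighted by $O(\log N)$, measure-preservation plus H\"older gives $\|E_N\|_{L^p(X)} \lesssim N^{-1/2}\log N\,\|f\|_{L^{p_1}}\|g\|_{L^{p_2}}$. Summing $\|E_{2^k}\|_{L^p}^p$ over $k$ and invoking Borel--Cantelli yields $E_{2^k} \to 0$ a.e., and the monotonicity of the analogous nonnegative sum (i.e.\ with $|f|,|g|$ in place of $f,g$) extends this to $E_N \to 0$ a.e. Hence $\tilde A_N \to L$ a.e.

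For the second step, Abel summation with $a_n = (\log p)F_p(x)$ when $n = p$ is prime (and $a_n = 0$ otherwise) and weights $b_n = 1/\log n$ gives
$$\frac{1}{N/\log N}\sum_{p \leq N}F_p(x) = \tilde A_N(x) + \frac{\log N}{N}\sum_{k=2}^{N-1}\frac{\tilde A_k(x)}{(\log k)^2} + O\!\left(\frac{M(x)\log N}{N}\right),$$
where $M(x) := \sup_n |\tilde A_n(x)|$ and the $O$-term collects the absolutely convergent second-order tail $\sum_k 1/(k(\log k)^2)$. Using the standard asymptotic $\sum_{k \leq N}1/(\log k)^2 \sim N/(\log N)^2$, the middle term is $O(M(x)/\log N) \to 0$ a.e.\ as long as $M(x) < \infty$ a.e. This last fact is obtained by applying the variational estimate~\eqref{variational} to $|f|,|g|$ along the lacunary sequence $\{2^k\}$ to get $\sup_k \Avg_{2^k,\Lambda;X}(|f|,|g|) \in L^p(X)$, upgrading to $\sup_N$ via the nonnegativity monotonicity $\Avg_{N,\Lambda;X}(|f|,|g|) \leq 2\,\Avg_{2N,\Lambda;X}(|f|,|g|)$, and then using $|\tilde A_N| \leq \Avg_{N,\Lambda;X}(|f|,|g|) + |E_N|$. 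Together with $\tilde A_N \to L$, this gives the asserted a.e.\ convergence of the prime-weighted average to $L$. The main (minor) technical point is this bootstrap from lacunary variational control to an all-scales maximal bound, which is resolved by the above nonnegativity argument.
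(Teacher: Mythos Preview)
Your proof is correct and provides a careful, detailed implementation of what the paper compresses into a single sentence (``Using the fact that $\log n = \log N + O(\log M)$ for $n \in [N/M, N]$ and the prime number theorem''). The underlying idea is the same in both: pass from the $\Lambda$-weighted averages of Theorem~\ref{main-thm} to the prime-counting normalization by exploiting $\log p \approx \log N$. You implement this via Abel summation, which is the natural precise tool here; the paper's hint points toward a dyadic comparison argument that amounts to the same thing.

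The one place you go beyond the paper's sketch is in justifying that $M(x) = \sup_N |\tilde A_N(x)|$ is finite almost everywhere, by bootstrapping the lacunary variational bound \eqref{variational} (applied to $|f|,|g|$) to an all-scales maximal bound via positivity and monotonicity. This step is genuinely needed for your Abel-summation error terms, and your argument for it is clean. (A minor simplification: the inequality $|\tilde A_N| \leq \Avg_{N,\Lambda;X}(|f|,|g|)$ holds directly by the triangle inequality, so you do not need to invoke $|E_N|$ there.) The removal of prime powers via Borel--Cantelli along dyadic scales plus monotonicity is also correct and standard.
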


Previously, the pointwise convergence of ergodic averages over the primes was known only in the case of a single polynomial iterate. This case was established by Bourgain~\cite{bourgain-arithmetic} and Wierdl~\cite{wierdl} 
 for linear polynomials (with the latter work allowing $L^q$ functions for any $q>1$), and the case of an arbitrary single polynomial iterate was handled by Nair~\cite{nair1},~\cite{nair2}. We also mention that the problem of pointwise convergence of ergodic averages with more than one iterate was discussed by Frantzikinakis in~\cite[Problem 12]{frantzikinakis-survey}; the specific problem there about two linear iterates however remains open. 

 Let us also mention that the \emph{norm convergence} of non-conventional ergodic averages is now known for any number of polynomial iterates, thanks to the works of Frantzikinakis--Host--Kra~\cite{fhk} and Wooley--Ziegler~\cite{wooley-ziegler}.

\subsection{Methods of proof}

From a high-level perspective, Theorem \ref{main-thm} is proven by combining the methods used in \cite{KMT0} to prove Theorem \ref{unweight-thm} with the methods used in \cite{joni} to prove Theorem \ref{mobius-ergodic}.  However, several technical difficulties make the analysis delicate in places, as we shall now discuss.

The first issue arises when trying to approximate various frequency-localized averages (analogous to \eqref{afg-1}, but with the weight $1$ replaced by $\Lambda$) by certain bilinear model operators (analogous to \eqref{afg-2}, but with the symbol $m_{\hat \Z}$ replaced by a variant $m_{\hat \Z^\times}$).  It is important for the arguments in \cite{KMT0} that the error in this approximation gains a polynomial factor $N^{-c}$ in $N$, or at least a quasipolynomial factor $\exp(-\log^c N)$.  Using the von Mangoldt function as a weight, this is possible in the absence of Siegel zeroes (and in particular assuming the generalized Riemann hypothesis); however, the presence of a Siegel zero near a given scale $N$ requires one to add a scale-dependent correction term to the bilinear symbol $m_{\hat \Z}$ in order to obtain a satisfactory approximation at small scales.  While this correction term is ultimately manageable because of the Landau--Page theorem, it significantly complicates the analysis, in that one cannot simply repeat arguments from \cite{KMT0} verbatim.  See Section \ref{Remarks-sec} for further discussion.

In order to avoid this issue, we adapt some ideas from \cite{TT} and swap the von Mangoldt weight $\Lambda$ early in the argument with an approximant $\Lambda_N$ that is not sensitive to Siegel zeroes.  The arguments used in \cite{joni} to establish Theorem \ref{mobius-ergodic} allow one to do so provided that one has good control of the little Gowers uniformity norm in the sense that
$$    \|\Lambda - \Lambda_N\|_{u^{d+1}[N]} \lesssim \langle \Log N \rangle^{-A}
$$
for some large $A$.  One available choice of approximant is the \emph{Cram\'er(--Granville) approximant}
$$ \Lambda_{\Cramer, w}(n) \coloneqq \frac{W}{\varphi(W)} \ind{(n,W)=1}$$
for a suitable parameter $w$ and $W=\prod_{p\leq w}p$ (we end up selecting $w \coloneqq \exp(\Log^{1/C_0} N)$ for some large constant $C_0$); the required bounds follow for instance from the results in \cite{mstt} (which even extend to shorter intervals).  A useful fact, first observed in \cite{TT} and refined further here, is that these approximants are stable in Gowers uniformity norms with respect to the $w$ parameter; see Lemma \ref{cramer-stable} for a precise statement.

After using the arguments from \cite{joni} to replace $\Lambda$ by $\Lambda_N$, most of the arguments of \cite{KMT0} proceed with only minor changes; in particular, the analogue of the approximation of \eqref{afg-1} by \eqref{afg-2} is fairly routine, thanks in large part to the fundamental lemma of sieve theory; see the proof of Proposition \ref{mod-approx} in Section \ref{verifying-sec}.  We remark that Siegel zeroes play no role whatsoever in establishing this proposition, in contrast to what would have occurred if we retained the original weight $\Lambda$ instead of $\Lambda_N$. However, three components of the argument of Theorem \ref{main-thm} still require some additional care. The first is a \emph{polynomial improving estimate}
\begin{align*}
    \Big( \sum_{x\in \mathbb{Z}} |\E_{n\in [N]} (\Lambda(n)+\Lambda_N(n))f(x+P(n)) |^2 \Big)^{1/2}
   \lesssim N^{d(1/2-1/p)} \|f\|_{\ell^p(\Z)}
\end{align*}
for $p\in (2-c_P,2]$, with $c_P>0$ small (see Lemma~\ref{lp-improv}). This is eventually reduced to the analogous unweighted improving estimate using some properties of the  Cram\'{e}r approximant, in particular Corollary \ref{mean}.

The second component is the $p$-adic estimates, in which the averaging operator \eqref{avg-zp} ends up being replaced by the variant
$$
    \Avg_{\Z_p^\times}(f,g)(x) \coloneqq \E_{n \in \Z_p^\times} f(x+n) g(x+P(n)).
$$
It is necessary to bound the $L^2(\Z_p) \times L^2(\Z_p) \to L^q(\Z_p)$ norm of this operator by exactly the constant $1$ when $q>2$ is close to $2$ and $p$ is large; losing a multiplicative factor such as $1+O(1/p)$ would not be acceptable as one needs to multiply these constants over all primes $p$.  Fortunately, the effect of restricting to the invertible elements $\Z_p^\times$ of $\Z_p$ is not too severe, and the arguments from \cite{KMT0} can be adapted with only a modest amount of effort to avoid any losses of $O(1/p)$ in the constants.

The most delicate step is to adapt the single-scale estimate \eqref{single} to the weighted setting.  As the Peluse--Prendiville theory is somewhat complicated, our approach is to use the approximation theory from \cite{joni} to try to replace the approximant $\Lambda_N$ with an approximant closer to the constant weight $1$.  With the theory of the Cram\'er approximant from \cite{TT}, it is not too difficult to replace $\Lambda_N$ by a Cram\'er approximant $\Lambda_{\Cramer, w}$ for a smaller parameter $w$, with error terms polynomial in $w$.  However, a technical problem then arises: this approximant is not a pure ``Type I'' sum of the form $\sum_{d\mid n} \lambda_d$ for certain well-behaved weights $\lambda_d$, preventing one from removing the weight entirely.  To resolve this, we appeal to the theory from \cite{joni} once more to replace the Cram\'er approximant $\Lambda_{\Cramer, w}$ with a more Fourier-analytic approximant, which we call the \emph{Heath-Brown approximant} (as it was introduced by him in \cite{HB85}). This approximant is defined by
$$     \Lambda_{\HB,Q}(n) \coloneqq \sum_{q < Q} \frac{\mu(q)}{\varphi(q)} c_q(n) $$
where $Q$ is a parameter of similar size to $w$, and $c_q$ is a Ramanujan sum; roughly speaking, this approximant is the main term in the Fourier restriction of the von Mangoldt function to major arcs.  By using the analysis of the little Gowers uniformity norms of Type I sums from \cite{mat1}, we are able to show that $\Lambda_{\Cramer, w}$ is close in these norms to 
$\Lambda_{\HB,w}$, and then by the theory from \cite{joni} (and a dyadic decomposition), one can replace the former by the latter, at least for the purposes of proving an ``$\ell^\infty$'' Peluse--Prendiville inverse theorem for weighted averages.  As in \cite{KMT0}, it is also necessary to obtain a more delicate ``$\ell^2$'' inverse theorem, which requires a weighted version of the $\ell^p$ improving inequality from \cite{koselacey}, but this can be achieved by a variant of the arguments just presented.

\begin{remark}
The proof of Theorem~\ref{main-thm} quickly yields a version of Peluse's inverse theorem~\cite[Theorem 3.3]{Peluse} with prime weights. This was not needed for proving Theorem~\ref{main-thm} (what we did need was in essence a version with the weight function $\Lambda_N$; see Proposition~\ref{lip-wt}), but we believe such a result may be of independent interest, so we record it as Theorem~\ref{peluse-thm}. Some combinatorial applications of this result will be investigated in a future work.
\end{remark}

\begin{remark}
We expect the methods of this paper to be applicable to be applicable also to pointwise convergence of bilinear polynomial ergodic averages weighted by some other weights of arithmetic interest. The exact requirements for the weight are not so easy to axiomatize, but we need the weight to satisfy analogues of~\eqref{lambda-approx}--\eqref{imp}, as well as a suitable ``local-to-global'' factorization over the primes to be able to pass to the adeles. In particular, we expect the methods to be applicable to ergodic averages weighted by the divisor function $\tau$, but we will not pursue this problem here.
\end{remark}

\subsection{Acknowledgments} We thank the referee for careful reading of the paper. 

BK is supported by an EPSRC New Investigators grant and an ERC Starting grant.

TT is supported by NSF grant DMS-2347850.

JT is supported by European Union's Horizon
Europe research and innovation programme under Marie Sk\l{}odowska-Curie grant agreement No. 101058904 and ERC grant agreement No. 101162746, and Academy of Finland grant No. 362303. 

\section{Notation}\label{notation-sec}

\subsection{General notation}

Our notation largely follows \cite{KMT0}, though somewhat abridged, as some of the notation in \cite{KMT0} is only used to establish results or arguments that we are treating here as ``black boxes''.

We use $\Z_+ \coloneqq \{1,2,\dots\}$ to denote the positive integers and $\N \coloneqq \{0,1,2,\dots\}$ to denote the natural numbers.  

We use $\ind{E}$ to denote the indicator function of a set $E$. Similarly, if $S$ is a statement, we use $\ind{S}$ to denote its indicator, equal to $1$ if $S$ is true and $0$ if $S$ is false. Thus for instance $\ind{E}(x) = \ind{x \in E}$. We use $|E|$ to denote the cardinality of a set $E$, and adopt for $f\colon E\to \mathbb{C}$ the averaging notation
$$ \E_{n \in E} f(n) \coloneqq \frac{1}{|E|} \sum_{n \in E} f(n)$$
if $E$ is finite and non-empty.  We similarly define $L^p$ norms
$$ \|f\|_{L^p(E)} \coloneqq \left( \sum_{n \in E} |f(n)|^p\right)^{1/p}$$
for $0 < p < \infty$, with the usual convention that $\|f\|_{L^\infty(E)}$ is the (essential) supremum of $f$ on $E$.  One can extend these averaging conventions to other measurable spaces $E$ of positive finite measure (such as a $p$-adic group $\Z_p$ equipped with Haar probability measure), if $f$ (or $|f|^p$) is absolutely integrable, in the obvious fashion. When $X$ is equipped with counting measure, we will write  $\ell^p(X)$ or just $\ell^{p}$ in place of  $L^p(X)$.

Throughout, $p'$ denotes the dual exponent of $p\in [1,\infty]$, so $1/p+1/p'=1$.

If $f \colon X \to \C$, $g \colon Y \to \C$ are functions, we use $f \otimes g \colon X \times Y \to \C$ to denote the tensor product
$$ (f \otimes g) (x,y) \coloneqq f(x) g(y).$$

\subsection{Magnitudes and asymptotic notation}\label{asymptotic-sec}

We use the Japanese bracket notation
$$ \langle x \rangle \coloneqq (1 + |x|^2)^{1/2}$$
for any real or complex $x$.  We use $\lfloor x \rfloor$ to denote the greatest integer less than or equal to $x$. 
For any $N \geq 1$ we define the \emph{logarithmic scale} $\Log N$ of $N$ by the formula
\begin{equation}\label{log-scale}
 \Log N \coloneqq \lfloor \log N / \log 2 \rfloor
 \end{equation}
thus $\Log N$ is the unique natural number such that $2^{\Log N} \leq N < 2^{\Log N+1}$.

For any two quantities $A, B$ we will write
$A \lesssim B$, $B \gtrsim A$, or $A = O(B)$ to denote the bound
$|A| \leq CB$ for some absolute constant $C$.  If we need the implied constant $C$ to depend on additional parameters we will denote this by subscripts, thus for instance $A \lesssim_\rho B$ denotes the bound $|A| \leq C_\rho B$ for some $C_\rho$ depending on $\rho$.  We write $A \sim B$ for $A \lesssim B \lesssim A$.  To abbreviate the notation we will sometimes explicitly permit the implied constant to depend on certain fixed parameters (such as the polynomial $P$) when the issue of uniformity with respect to such parameters is not of relevance.  Due to our reliance in some places\footnote{Specifically, Siegel's theorem is used in \cite{mstt}, and we will use results from that paper to establish \eqref{lambda-approx}.} on tools based on Siegel's theorem, several of the implied constants in our arguments will be ineffective, but we will not track the effectivity of constants explicitly in this paper.

\subsection{Algebraic notation}
If $R$ is a commutative ring, we use $R^\times$ to denote the multiplicatively invertible elements of $R$.

\subsection{Number theoretic notation}

For any $N > 0$, $[N]$ denotes the discrete interval $[N] \coloneqq \{ n \in \Z_+\colon n \leq N \}$.  
If $q_1,q_2 \in \Z_+$, we write $q_1\mid q_2$ if $q_1$ divides $q_2$. If $a,q \in \Z_+$, we let $(a,q)$ denote the greatest common divisor of $a$ and $q$, and $[a,q]$ the least common multiple.

All sums and products over the symbol $p$ will be understood to be over primes; other sums will be understood to be over positive integers unless otherwise specified.

In addition to the von Mangoldt function $\Lambda(n)$ and M\"obius function $\mu(n)$ already introduced, we will also use the divisor function $\tau(n) \coloneqq \sum_{d\mid n} 1$ and the Euler totient function $\varphi(n) \coloneqq |(\Z/n\Z)^\times|$.

\subsection{Fourier analytic notation}\label{fourier-sec}

We write $e(\theta) \coloneqq e^{2\pi i \theta}$ for any real $\theta$, and also $\|\theta\|_{\R/\Z}$ for the distance from $\theta$ to the nearest integer.

For a prime $p$, we let $\Z_p$ be the ring of $p$-adic integers, defined as the inverse limit of the cyclic groups $\Z/p^j\Z$ for $j \in \N$; this is a compact abelian group equipped with a Haar probability measure.  Similarly, let $\hat \Z$ be the ring of profinite integers, defined as the inverse limit of the cyclic groups $\Z/Q\Z$ for all positive integers $Q$; this is again a compact abelian group with a Haar probability measure, being the direct product of the $\Z_p$.  We use $\E_{\Z_p}$ or $\E_{\hat \Z}$ to denote averaging with respect to these compact abelian groups.  Finally, we let $\A_\Z \coloneqq \R \times \hat \Z$ denote the ring of adelic integers, which is a locally compact abelian group.

We define some Fourier transforms on various locally compact abelian groups:
\begin{itemize}
\item[(i)] Given a summable function $f \colon \Z \to \C$, the Fourier transform $\F_\Z f \colon \R/\Z \to \C$ is defined by the formula
$$ \F_\Z f(\theta) \coloneqq \sum_{n \in \Z} f(n) e(-n\theta).$$
\item[(ii)] Given a Schwartz function $f \colon \R \to \C$, the Fourier transform $\F_\R f \colon \R \to \C$ is defined by the formula
$$ \F_\R f(\xi) \coloneqq \int_\R f(x) e(-x\xi) \ dx.$$ 
\item[(iii)] Given a function $f \colon \hat \Z \to \C$ which is \emph{Schwartz--Bruhat} in the sense that it factors through a function $f_Q \colon \Z/Q\Z \to \C$ on a cyclic group, we define the Fourier transform $\F_{\hat \Z} f \colon \Q/\Z \to \C$ by the formula
$$ \F_{\hat \Z} f\left(\frac{a}{Q} \mod 1\right) \coloneqq \E_{n \in \Z/Q\Z} f_Q(n) e(-an/Q)$$
for any integer $a$.
\item[(iv)] Given a function $f \colon \A_\Z \to \C$ which is Schwartz--Bruhat in the sense that it factors through a function $f_Q \colon \R \times \Z/Q\Z$ which is Schwartz in the first variable, we define the Fourier transform $\F_\A f \colon \R \times \Q/\Z \to \C$ by the formula
$$ \F_{\hat \A} f\left(\xi, \frac{a}{Q} \mod 1\right) \coloneqq \E_{n \in \Z/Q\Z} \int_\R f_Q(x,n) e(-x\xi - an/Q)\ dx$$for
 integer $a$ and $\xi \in \R$, and $\F_{\hat \A}$ vanishing otherwise.
\end{itemize}
We refer the reader to \cite[\S 4]{KMT0} for a further discussion of the Fourier transform on such locally compact abelian groups as $\Z$, $\R$, $\Z_p$, $\hat \Z$, $\Z/Q\Z$ or $\A_\Z$, and the various intertwining relationships between these transforms.

Given a Schwartz symbol $m \colon \R/\Z \to \C$, we define the Fourier multiplier $\T_m$ on $\ell^2(\Z)$ by the formula
$$ \T_m f(x) \coloneqq \int_{\R/\Z} m(\xi) \F_\Z f(\xi) e(x\xi) \ d\xi,$$
and similarly given a bilinear Schwartz symbol $m \colon \R/\Z \times \R/\Z \to \C$, define the bilinear Fourier multiplier $\B_m$ by the formula
$$ \B_m(f,g)(x) \coloneqq \int_{\R/\Z} \int_{\R/\Z} m(\xi,\eta) \F_\Z f(\xi) \F_\Z g(\eta) e(x(\xi+\eta)) \ d\xi d\eta.$$
Linear and bilinear multipliers are defined similarly for the other locally compact abelian groups defined here, and obey a certain operator calculus; again, we refer the reader to \cite[\S 4]{KMT0} for details, as we shall largely use facts and arguments about these operators from \cite{KMT0} as ``black boxes''.

We will need the Ionescu--Wainger Fourier multipliers on major arcs.  Again, we shall mostly be using these tools as ``black boxes'', so their definition and properties are not of critical importance in this paper; but for sake of completeness we recall the main definitions from \cite{KMT0}.  Given a small parameter $\rho$, it is possible to assign a \emph{Ionescu--Wainger height} $\Height(\alpha)=\Height_{\rho}(\alpha) \in 2^\N$ for each $\alpha \in \Q/\Z$; see \cite[Appendix A]{KMT0}.  Using this height, we define the Ionescu--Wainger arithmetic frequency sets
$$ (\Q/\Z)_{\leq l} \coloneqq \Height^{-1}([2^l]) = \{\alpha \in \Q/\Z \colon \Height(\alpha) \leq 2^l\}$$
and the Ionescu--Wainger major arcs
\begin{equation}\label{major-arcs}
{\mathcal M}_{\leq l, \leq k} \coloneqq \{ \xi + \alpha\colon \xi \in \R, |\xi| \leq 2^k, \alpha \in (\Q/\Z)_{\leq l} \},
\end{equation}
thus ${\mathcal M}_{\leq l, \leq k}$ is the union of arcs $[\alpha-2^k, \alpha+2^k]$ for $\alpha \in (\Q/\Z)_{\leq l}$; we will be focused on the regime where $k$ is sufficiently small that these arcs are disjoint, which happens whenever $k \leq -C_\rho 2^{\rho l}$.
We also use the variants
$$ (\Q/\Z)_l \coloneqq (\Q/\Z)_{\leq l} \backslash (\Q/\Z)_{\leq l-1} =  \Height^{-1}(2^l) = \{ \alpha \in \Q/\Z \colon \Height(\alpha)=2^l\},$$
and
$$  {\mathcal M}_{l, \leq k} \coloneqq  {\mathcal M}_{\leq l, \leq k} \backslash  {\mathcal M}_{\leq l-1, \leq k}$$
with the convention that $(\Q/\Z)_{\leq -1}$ and ${\mathcal M}_{\leq -1,k}$ are empty.

The  \emph{Ionescu--Wainger Fourier projection operator} $\Pi_{\leq l, \leq k}$ for any $(l,k) \in \N \times \Z$ is defined by the formula
$$ \Pi_{\leq l, \leq k} f(x) = \sum_{\alpha \in (\Q/\Z)_{\leq l}} \int_\R \eta(\theta/2^k) \F_\Z f(\alpha+\theta) e(-x(\alpha+\theta))\ d\theta$$
where $\eta$ is a smooth even function supported on $[-1,1]$ that equals $1$ on $[-1/2,1/2]$. We then define
$$ \Pi_{l, \leq k} \coloneqq \Pi_{\leq l,\leq k} - \Pi_{\leq l-1,\leq k}.$$
We refer the reader to \cite[\S 5, Appendix A]{KMT0} for the key properties of these projections, which can be viewed as analogues of Littlewood--Paley projection operators for major arcs.

\subsection{Variational norms}\label{var-def}

A sequence $1 \leq N_1 < \dots < N_k$ of positive reals is said to be \emph{$\lambda$-lacunary} for some $\lambda \geq 1$ if
$$ N_{j+1}/N_j > \lambda$$
for all $1 \leq j < k$.

For any finite dimensional normed vector space $(B,\|\cdot\|_B)$ and any sequence
 $(\mathfrak a_t)_{t\in\I}$ of elements of $B$ indexed by a totally
 ordered set $\I$, and any exponent $1 \leq r < \infty$, the
 $r$-variation seminorm is defined by the formula
 \begin{equation}\label{var-seminorm}
  \| (\mathfrak a_t)_{t \in \I} \|_{V^r(\I; B)} \coloneqq
 \sup_{J\in\Z_+} \sup_{\substack{t_{0} \leq \dotsb \leq t_{J}\\ t_{j}\in\I}}
\Big(\sum_{j=0}^{J-1}  \|\mathfrak a(t_{j+1})-\mathfrak a(t_{j})\|_B^{r} \Big)^{1/r},
 \end{equation}
where the  supremum is taken over all finite increasing sequences in $\I$, and is set by convention to equal zero if $\I$ is empty.

The $r$-variation norm for $1 \leq r < \infty$ is defined by
\begin{equation}\label{vardef}
  \| (\mathfrak a_t)_{t \in \I} \|_{\V^r(\I; B)} 
\coloneqq \sup_{t\in\I}\|\mathfrak a_t\|_B+
\| (\mathfrak a_t)_{t \in \I} \|_{V^r(\I;B)}.
\end{equation}
This clearly defines a norm on the space of functions from $\I$ to $B$.
If $B=\C$, then we will abbreviate $V^r(\I;X)$ to $V^r(\I)$ or $V^r$, and $\V^r(\I;X)$ to $ \V^r(\I)$ or $\V^r$. 

\subsection{Gowers norms}

In addition to the little Gowers uniformity norm $u^{d+1}[N]$ defined in \eqref{little-gowers}, we will also need the full Gowers norm $U^{d+1}[N]$ defined for functions $f \colon \Z \to \C$ as
$$ \|f\|_{U^{d+1}[N]} \coloneqq \| f \ind{[N]} \|_{U^{d+1}(\Z)} / \|\ind{[N]}\|_{U^{d+1}(\Z)}$$
where the $U^{d+1}(\Z)$ norm is defined for finitely supported functions by the formula
$$ \|f\|_{U^{d+1}(\Z)}^{2^{d+1}} \coloneqq \sum_{x,h_1,\dots,h_{d+1} \in \Z} \prod_{\omega \in \{0,1\}^{d+1}} {\mathcal C}^{\omega_1+\dots+\omega_{d+1}} f(x + \sum_{j=1}^{d+1} \omega_j h_j)$$
where $\omega = (\omega_1,\dots,\omega_{d+1})$, and ${\mathcal C}$ denotes the complex conjugation operator.  It is well known that
\begin{equation}\label{udn}
 \| f\|_{u^{d+1}[N]} \lesssim_d \|f\|_{U^{d+1}[N]};
\end{equation}
see, e.g., \cite[(2.2)]{gt-u3}.

Similar uniformity norms $u^{d+1}(I)$, $U^{d+1}(I)$ can then be defined for other intervals $I \subset \R$ than $[N]$ in the obvious fashion.

\section{High-level proof of theorem}

We now describe the high-level proof of Theorem \ref{main-thm}, reducing it to two key statements (Theorem~\ref{ssmae} and Proposition~\ref{mod-approx})  that we will prove in Section~\ref{verifying-sec}.  The arguments here will closely follow those of \cite{KMT0}, and some familiarity with the arguments in that paper would be highly recommended in order to follow the text in this section.

In the next section we shall introduce an approximant $\Lambda_N \colon \N \to \R$ to $\Lambda$ (depending on a parameter $C_0$) which enjoys the bound
\begin{equation}\label{lambda-approx}
    \|\Lambda - \Lambda_N\|_{u^{d+1}[N]} \lesssim_{A,C_0} \langle \Log N \rangle^{-A}
\end{equation}
for any $A>0$, as well as the pointwise bound
\begin{equation}\label{tau-n}
     \Lambda_N(n) \lesssim_{C_0} \langle \Log N \rangle^{O(1)},
\end{equation}
the $L^1$ bound
\begin{equation}\label{L1}
   \E_{n \in [N]} |\Lambda_N(n)| \lesssim_{C_0} 1,
\end{equation}
and finally the \emph{polynomial improving bound}
\begin{equation}\label{imp}
\Big\| \E_{n \in [N]} (\Lambda(n)+|\Lambda_N(n)|) |g(\cdot-P(n)+n)| \Big\|_{\ell^{p'}(\mathbb{Z})} \lesssim_{C_0} N^{d (1/p' - 1/p)} \|g\|_{\ell^p(\mathbb{Z})} 
\end{equation}
for all $u_P< p \leq 2$ and $g \in \ell^p(\Z)$, with $u_P < 2$ an exponent depending only on $P$, and $C>0$ a constant also depending only on $P$.

We shall also require further properties\footnote{Our choice of approximant $\Lambda_N$ will in fact be nonnegative, and although this is not crucial,  it makes it easier to establish the $L^1$ bound \eqref{L1} and the improving bound \eqref{imp}.} of $\Lambda_N$ in the sequel as needed.  

Arguing as in the proof of \cite[Proposition 3.2(i)]{KMT0} (inserting the nonnegative weight $\Lambda$ as necessary), we see that the pointwise convergence claim of Theorem~\ref{main-thm} follows from the ``H\"older variational estimate'' \eqref{variational}, so we focus now on this estimate.  Henceforth we fix $p_1,p_2,p,d,P,r,\lambda$, as well as the finite $\lambda$-lacunary set $\D$. We allow all constants to depend on $p_1, p_2, p, d, P, r, \lambda$ (but not on $\D$). As in \cite[\S 5]{KMT0}, we now select sufficiently large parameters
$$ 1 \lesssim C_0 \lesssim C_1 \lesssim C_2 \lesssim C_3.$$

By a routine application of Calder\'{o}n's transference principle (\cite[Theorem 3.2(ii)]{KMT0}), adapted to this weighted setting), it suffices to prove \eqref{variational} for the integer shift system $(\Z, |\cdot|, x \mapsto x-1)$, endowed with counting measure $|\cdot|$. Thus, our task is now to show that
$$
  \| (\Avg_{N,\Lambda;\Z}(f,g))_{N \in \D} \|_{\ell^p(\Z; \V^r)} \lesssim \|f\|_{\ell^{p_1}(\Z)} \|g\|_{\ell^{p_2}(\Z)}
$$
for all $f \in\ell^{p_1}(\Z)$ and $g \in \ell^{p_2}(\Z)$.  Arguing as in  the proof of \cite[Proposition 3.2(iii)]{KMT0} (inserting the weight $\Lambda$ as needed), it suffices to prove the ``upper half''
\begin{equation}\label{variational-Z}
  \| (\tilde \Avg_{N,\Lambda}(f,g))_{N \in \D} \|_{\ell^p(\Z; \V^r)} \lesssim \|f\|_{\ell^{p_1}(\Z)} \|g\|_{\ell^{p_2}(\Z)}
\end{equation}
of this estimate, where the averaging operators $\tilde \Avg_{N,w}$ were defined in \eqref{avg-upper-def}.

The next step is to replace the von Mangoldt weight $\Lambda$ by the approximant $\Lambda_N$.

\begin{lemma}[From $\Lambda$ to $\Lambda_N$] 
In order to prove~\eqref{variational-Z} (and hence~\eqref{variational}), it suffices to show that
\begin{equation}\label{atilde} \| (\tilde \Avg_{N,\Lambda_N}(f,g))_{N \in \D} \|_{\ell^{p}(\Z; \V^r)} \lesssim_{C_3} \|f\|_{\ell^{p_1}(\Z)} \|g\|_{\ell^{p_2}(\Z)}.
\end{equation}
\end{lemma}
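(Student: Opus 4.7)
The plan is to exchange $\Lambda$ for $\Lambda_N$ by absorbing the difference into an error term that is summable over the $\lambda$-lacunary set $\D$, using the smallness of $\Lambda - \Lambda_N$ in the $u^{d+1}[N]$-norm recorded in~\eqref{lambda-approx}. By the triangle inequality for the $\V^r$-seminorm, it suffices to bound the error average
\begin{equation*}
\bigl\| (\tilde \Avg_{N, \Lambda - \Lambda_N}(f, g))_{N \in \D}\bigr\|_{\ell^p(\Z; \V^r)} \lesssim \|f\|_{\ell^{p_1}(\Z)} \|g\|_{\ell^{p_2}(\Z)}.
\end{equation*}
For this error I would discard the variational structure via the crude bound $\|(a_N)_{N\in\D}\|_{\V^r} \leq 2 \sum_{N\in\D} |a_N|$, which reduces matters to the summable single-scale decay
\begin{equation}\label{plan-target}
\|\tilde \Avg_{N, \Lambda - \Lambda_N}(f, g)\|_{\ell^p(\Z)} \lesssim_A \langle \Log N\rangle^{-A}\, \|f\|_{\ell^{p_1}(\Z)} \|g\|_{\ell^{p_2}(\Z)}, \qquad A > 0.
\end{equation}
Since $\D$ is $\lambda$-lacunary, its $j$th element satisfies $\Log N \gtrsim_\lambda j$, so $\sum_{N \in \D} \langle \Log N\rangle^{-A}$ is summable once $A > 1$ and the bound on the error follows from \eqref{plan-target}.

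To prove \eqref{plan-target} I would follow the scheme used in \cite{joni} for the M\"obius weight, applied here to the normalized weight $\theta \coloneqq (\Lambda - \Lambda_N)/\langle \Log N\rangle^{C}$. For a suitable constant $C$, this $\theta$ is $1$-bounded on $[N]$ by \eqref{tau-n} combined with the trivial bound $\Lambda(n) \leq \log N$ on $[N]$, and by~\eqref{lambda-approx} it satisfies $\|\theta\|_{u^{d+1}[N]} \lesssim_{A'} \langle \Log N\rangle^{-A'}$ for any $A' > 0$. Applied with this $\theta$, the trilinear estimate \eqref{unif-est} yields an $\ell^\infty \times \ell^\infty \to \ell^1$ (localized) bound for $\tilde \Avg_{N, \Lambda - \Lambda_N}(f,g)$ with polynomial decay $\langle \Log N\rangle^{-A'/K}$.

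To upgrade from $\ell^\infty \times \ell^\infty \to \ell^1$ to the mixed norms $\ell^{p_1} \times \ell^{p_2} \to \ell^p$, I would interpolate this bound against a trivial maximal-type pointwise estimate of the form
\begin{equation*}
|\tilde \Avg_{N, \Lambda - \Lambda_N}(f,g)(x)| \lesssim \langle \Log N\rangle^{O(1)} (M_1 |f|^{p_1})^{1/p_1}(x)\,(M_P |g|^{p_2})^{1/p_2}(x),
\end{equation*}
where $M_1, M_P$ are Hardy--Littlewood maximal operators along the orbits $n$ and $P(n)$; this follows by H\"older's inequality together with the pointwise bounds \eqref{tau-n} and $\Lambda(n) \leq \log N$. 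The polylogarithmic loss from the $\Lambda_N$ pointwise bound is absorbed into the arbitrary decay exponent by taking $A'$ sufficiently large relative to $A$.

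The main obstacle I anticipate is precisely the final upgrade to mixed norms in \eqref{plan-target}: the input \eqref{unif-est} is tailored to $1$-bounded inputs and test functions, so going to genuine $\ell^{p_1}\times \ell^{p_2}\to \ell^p$ bounds without wasting the Gowers-norm decay requires care in how one interpolates. I expect to adapt the parallel reduction carried out in \cite{joni} in the M\"obius setting essentially verbatim, using \eqref{lambda-approx} in place of the $u^{d+1}[N]$-decay of $\mu$.
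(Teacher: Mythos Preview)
Your high-level strategy---reduce to a summable single-scale estimate \eqref{plan-target} via $\|(a_N)\|_{\V^r} \le 2\sum_N |a_N|$ and then invoke the Gowers-norm control \eqref{unif-est} together with \eqref{lambda-approx}---matches the paper exactly. The gap is in the step you yourself flag as the main obstacle: upgrading the localized $\ell^\infty \times \ell^\infty \to \ell^1$ bound from \eqref{unif-est} to an $\ell^{p_1} \times \ell^{p_2} \to \ell^p$ bound with logarithmic decay. Your proposed interpolation against a pointwise maximal estimate does not close. First, the maximal bound as written would require $M_1, M_P$ to act on $\ell^1$, which fails; but more importantly, even replacing it by the simpler crude H\"older bound $\|\tilde \Avg_{N,\Lambda-\Lambda_N}(f,g)\|_{\ell^p} \lesssim \|f\|_{\ell^{p_1}}\|g\|_{\ell^{p_2}}$ (which the paper in fact uses as one endpoint), the interpolation still fails: after localizing $f,g$ to intervals of length $\sim N^d$ (a step you omit but which is needed to apply \eqref{unif-est}), the $\ell^\infty\times\ell^\infty \to \ell^1$ bound carries an unavoidable factor of $N^d$, and bilinear interpolation against any endpoint with merely $O(\langle\Log N\rangle^{O(1)})$ norm retains a positive power of $N$ in the constant. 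The reduction in \cite{joni} does not circumvent this: that paper targets pointwise convergence on finite-measure systems, not a quantitative $\ell^p$ bound over $\Z$.

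The paper supplies the missing ingredient, namely the polynomial $\ell^p$-improving estimate \eqref{imp} (in the quantitative form of Lemma~\ref{lp-improv}). After localizing and reducing by interpolation with the crude H\"older bound to the case $p_1=p_2=2$, $p=1$, the paper dualizes to a \emph{linear} estimate $\|\tilde A^*_{N,\Lambda-\Lambda_N}(h,g)\|_{\ell^2} \lesssim_A \langle\Log N\rangle^{-A}\|g\|_{\ell^2}$ for the adjoint. This adjoint has two endpoints: an $\ell^q \to \ell^{q'}$ bound with norm $N^{d(1/q'-1/q)}$ from the improving estimate (a genuine $N$-\emph{gain}, since $q<2<q'$), and an $\ell^\infty\to\ell^1$ bound with norm $N^d\langle\Log N\rangle^{-A}$ from \eqref{unif-est}. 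Linear interpolation to $\ell^2\to\ell^2$ makes the $N$-exponents cancel exactly, leaving only the decay $\langle\Log N\rangle^{-A(1-q/2)}$. Without the improving gain there is no mechanism for this cancellation, and your scheme cannot produce \eqref{plan-target}.
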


\begin{proof}
Assuming~\eqref{atilde}, from the triangle inequality and the lacunarity of $\D$ we see that~\eqref{variational-Z} reduces to the single-scale estimate
\[ \| \tilde \Avg_{N,\Lambda - \Lambda_N}(f,g) \|_{\ell^{p}(\Z)} \lesssim_{C_3} \langle \Log N \rangle^{-2} \| f \|_{\ell^{p_1}(\Z)} \| g \|_{\ell^{p_2}(\Z)}.  \]
for each $N \in \D$.

Using the triangle and H\"older inequalities, the prime number theorem, and the hypothesis~\eqref{L1}, we may bound
\[ \| \tilde \Avg_{N,\Lambda - \Lambda_N}(f,g) \|_{\ell^{p}(\Z)} \lesssim_{C_0} \| f \|_{\ell^{p_1}(\Z)} \| g \|_{\ell^{p_2}(\Z)},  \]
so by interpolation (modifying the exponents $p_1,p_2,p$ as needed) it suffices to prove the $\ell^2 \times \ell^2 \to \ell^1$ bound
\begin{align}\label{eq:singlescale}
\| \tilde \Avg_{N,\Lambda - \Lambda_N}(f,g) \|_{\ell^{1}(\Z)} \lesssim_{A,C_3} \langle \Log N \rangle^{-A} \| f \|_{\ell^2(\Z)} \| g \|_{\ell^2(\Z)}
\end{align}
for any $A>0$.

We claim that it suffices to prove \eqref{eq:singlescale} when $f,g$ are supported on intervals of length $N^d$. 
Write
\begin{align*}
f=\sum_{i\in \Z}f_i,\quad g=\sum_{i\in \Z}g_i,\quad f_i=f\ind{(iN^d,(i+1)N^d]},\quad g_i=g\ind{(iN^d,(i+1)N^d]}.    
\end{align*}
Let $C=C_P$ be such that $\{P(n)\colon n\in [N]\}$ is contained in an interval of length $CN^d$. Supposing that~\eqref{eq:singlescale} holds whenever $f,g$ are supported on intervals of length $N^d$, by the triangle inequality and Cauchy--Schwarz we have
\begin{align*}
\| \tilde \Avg_{N,\Lambda - \Lambda_N}(f,g) \|_{\ell^{1}(\Z)} &\lesssim_{A,C_3} \langle \Log N \rangle^{-A} \sum_{\substack{i,j\in \Z\\|i-j|\leq C+1}} \| f_i \|_{\ell^2(\Z)} \| g_j \|_{\ell^2(\Z)}\\
&\lesssim_C \langle \Log N \rangle^{-A}\max_{k\in \Z}\sum_{i\in \Z}\| f_i \|_{\ell^2(\Z)} \| g_{i+k} \|_{\ell^2(\Z)}\\
&\leq \langle \Log  N\rangle^{-A} \max_{k\in \Z}\left(\sum_{i\in \Z}\| f_i \|_{\ell^2(\Z)}^2\right)^{1/2} \left(\sum_{i\in \Z}\| g_{i+k} \|_{\ell^2(\Z)}^2\right)^{1/2}\\
&\leq \langle \Log N \rangle^{-A}\|f\|_{\ell^2(\Z)}\|g\|_{\ell^2(\Z)}.  
\end{align*}
Assume henceforth that $f,g$ are supported on intervals of length $N^d$
in~\eqref{eq:singlescale}. By translation, we can further assume that $g$ is supported on $[N^d]$.

By duality, for some function $h\in \ell^{\infty}(\Z)$ with $|h|\leq 1$ we have
\begin{align}\label{eq:dual}
\| \tilde \Avg_{N,\Lambda - \Lambda_N}(f,g) \|_{\ell^{1}(\Z)} =\Big| \sum_{x\in \mathbb{Z}} h(x) \tilde A_{N,\Lambda-\Lambda_N}(f,g)(x)\Big|  = \Big| \sum_{x\in \mathbb{Z}} f(x) \tilde A_{N,\Lambda-\Lambda_N}^*(h,g)(x)\Big|,
\end{align} 
where 
\[ \tilde \Avg_{N,\Lambda-\Lambda_N}^*(h,g)(x) \coloneqq \mathbb{E}_{n\in [N]} (\Lambda-\Lambda_N)(n) h(x+n) g(x-P(n) +n)\]
is one of the adjoint averaging operators. By Cauchy--Schwarz, the desired estimate~\eqref{eq:singlescale} follows from~\eqref{eq:dual} if we show that  
\begin{align*}
\|\tilde A_{N,\Lambda-\Lambda_N}^*(h,g)(x)\|_{\ell^2(\Z)} \lesssim_{A,C_3} \langle \Log N \rangle^{-A} \|g\|_{\ell^2(\Z)}. \end{align*}
By~\eqref{imp} and the triangle inequality, for all $u_P<q\leq 2$ we have
\begin{align}\label{eq:int1}
\|\tilde A_{N,\Lambda-\Lambda_N}^*(h,g)\|_{\ell^{q'}(\Z)}\leq \|\tilde A_{N,\Lambda-\Lambda_N}^*(1,|g|)\|_{\ell^{q'}(\Z)}\lesssim N^{d(1/q'-1/q)}\|g\|_{\ell^q(\Z)}.    
\end{align}
On the other hand,~\cite[Theorem 4.1]{joni} (i.e,~\eqref{unif-est}), the assumption on the support of $g$, and the hypotheses \eqref{lambda-approx}, \eqref{tau-n}, we have
\begin{align}\label{eq:int2}
\| \tilde \Avg^*_{N,\Lambda - \Lambda_N}(h,g) \|_{\ell^{1}(\Z)} \lesssim_{A,C_3} \langle \Log N \rangle^{-A} N^d \| g \|_{\ell^\infty(\Z)}
\end{align}
for any $A>0$. Interpolating~\eqref{eq:int1} and~\eqref{eq:int2}, the claim~\eqref{eq:singlescale} follows. 
\end{proof}

With this lemma, we can now pass to the approximant $\Lambda_N$.

We are left with showing~\eqref{atilde}. Note from \eqref{L1} and the triangle and H\"older inequalities that $\tilde \Avg_{N,\Lambda_N}$ is bounded from $\ell^{p_1}(\Z) \times \ell^{p_2}(\Z)$ to $\ell^{p}(\Z)$ whenever $\frac{1}{p_1} + \frac{1}{p_2} = \frac{1}{p}$; the challenge is to estimate all the scales $N$ in $\D$ simultaneously in $\V^r$ norm.  We can restrict attention to scales $N \geq C_3$, since the contribution of the case $N < C_3$ can be handled just from the H\"older and triangle inequalities. The fact that the weight function $\Lambda_N$ now depends on $N$ will not significantly impact the arguments that follow.

As in \cite[\S 5]{KMT0}, we introduce the Ionescu--Wainger parameter
$$ \rho \coloneqq 1/C_1.$$
We use $c$ to denote various small positive constants that can depend on the fixed quantities $p_1, p_2, d, P, r$, but do not depend on $C_0,C_1,C_2,C_3$ (or $\rho$).  As reviewed in Section \ref{fourier-sec}, this allows us to create major arc sets $\M_{\leq l, \leq k}$, $\M_{l,\leq k}$ for $l \in \N$, $k \in \Z$, as well as associated Ionescu--Wainger multipliers $\Pi_{\leq l, \leq k}$, $\Pi_{l,\leq k}$.  As in \cite[(5.8)]{KMT0}, we say that the pair $(l,k)$ has \emph{good major arcs} if
$$ k \leq -C_\rho 2^{\rho l}$$
for some sufficiently large $C_\rho$ depending only on $\rho$.  This condition will always be satisfied in practice, and will ensure that the intervals $[\alpha-2^k, \alpha+2^k]$ that comprise $\M_{\leq l, \leq k}$ in \eqref{major-arcs} are disjoint, thus avoiding any difficulties arising from ``aliasing''.

In Section~\ref{verifying-sec}, we shall establish the following crucial variant of \cite[Theorem 5.12]{KMT0}.

\begin{theorem}[Single scale minor arc estimate]\label{ssmae}  Let $N \geq 1$, $l \in \N$, and suppose that $f,g \in \ell^2(\Z)$ obey one of the following two properties:
\begin{itemize}
    \item[(i)] $\F_\Z f$ vanishes on $\M_{\leq l, \leq -\Log N+l}$;
    \item[(ii)] $\F_\Z g$ vanishes on $\M_{\leq l, \leq -d\Log N + dl}$.
\end{itemize}
Then one has
$$ \| \tilde \Avg_{N, \Lambda_N}(f,g)\|_{\ell^1(\Z)} \lesssim_{C_1} (2^{-cl} + \langle \Log N \rangle^{-cC_1}) \|f\|_{\ell^2(\Z)} \|g\|_{\ell^2(\Z)}.$$
\end{theorem}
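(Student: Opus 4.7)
The strategy is to reduce Theorem~\ref{ssmae} to the unweighted minor arc estimate~\eqref{single} from \cite[Theorem 5.12]{KMT0}, by replacing the weight $\Lambda_N$ with successively simpler approximants and absorbing each error using the Gowers uniformity estimate~\eqref{unif-est} of \cite{joni}. Since the proof of \eqref{single} in \cite{KMT0} proceeds via a Peluse--Prendiville-type inverse theorem combined with the discrete $\ell^p$-improving inequality from \cite{koselacey} through a Hahn--Banach argument, it suffices to produce weighted analogues of each of these two ingredients, after which the Hahn--Banach scheme of \cite[\S 5]{KMT0} can be rerun essentially verbatim.

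The first step is to apply Lemma~\ref{cramer-stable} to replace $\Lambda_N$ (the Cram\'er approximant at the scale-dependent parameter $w_0 = \exp(\Log^{1/C_0} N)$) by the Cram\'er approximant $\Lambda_{\Cramer, w}$ at a much smaller parameter $w = \langle \Log N\rangle^{C_1}$. Dualising the $\ell^1$ bound against a $1$-bounded test function, localising $f,g$ to intervals of length $N^d$ (as in the preceding lemma), and invoking~\eqref{unif-est} applied to $\theta$ proportional to $\Lambda_N - \Lambda_{\Cramer, w}$ (normalised by~\eqref{tau-n}), the resulting error is controlled by $\langle \Log N\rangle^{-cC_1}$. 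Next, since $\Lambda_{\Cramer, w}$ is a Type I sum whose coefficients are awkward for direct Fourier analysis, one invokes the Gowers-norm estimates for Type I sums of \cite{mat1} to replace $\Lambda_{\Cramer, w}$ by the Heath--Brown approximant $\Lambda_{\HB, w} = \sum_{q<w} \frac{\mu(q)}{\varphi(q)} c_q(n)$ at the same $w$, again absorbing the error via~\eqref{unif-est}.

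The weight $\Lambda_{\HB,w}$ is now an explicit linear combination of arithmetic phases: expanding each Ramanujan sum $c_q(n) = \sum_{a \in (\Z/q\Z)^\times} e(an/q)$ and modulating by $e_{a/q}(y) \coloneqq e(ay/q)$ yields the decomposition
\[
\tilde \Avg_{N, \Lambda_{\HB,w}}(f,g)(x) = \sum_{q<w} \sum_{a\in(\Z/q\Z)^\times} \frac{\mu(q)}{\varphi(q)} e(-ax/q) \, \tilde\Avg_{N, 1}(e_{a/q} f, g)(x).
\]
Since $q < w \lesssim \langle \Log N\rangle^{C_1}$, each fraction $a/q$ has Ionescu--Wainger height $O_{\rho}(w^{O(1)})$, and the hypothesis that $\F_\Z f$ vanishes on $\M_{\leq l, \leq -\Log N + l}$ translates, after modulation, to $\F_\Z(e_{a/q}f)$ vanishing on a slightly enlarged major-arc set at a slightly higher level $l'' = l + O_\rho(\log w)$; this is still compatible with the hypothesis of~\eqref{single} (at level $l''$), provided $\rho = 1/C_1$ is small and $l \geq C_\rho \log\log N$, while the complementary regime $l = O(\log \log N)$ is handled trivially by the $L^1$ bound~\eqref{L1}. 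Applying~\eqref{single} termwise and using $\sum_{q<w} 1 \lesssim \langle \Log N\rangle^{C_1}$, the total is absorbed on the right-hand side by taking the effective exponent in \eqref{single} to be $C_2 \gg C_1$.

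The Hahn--Banach step additionally requires an $\ell^2$-variant of the inverse theorem, which in the unweighted case rests on the $\ell^p$-improving inequality of \cite{koselacey}; its weighted analogue is precisely the assumed bound~\eqref{imp}, and once in hand its role in the Hahn--Banach argument is essentially insensitive to the identity of the weight. The principal technical obstacle is the bookkeeping that threads the three successive Gowers-norm losses ($\Lambda_N \to \Lambda_{\Cramer, w}$, $\Lambda_{\Cramer, w} \to \Lambda_{\HB, w}$, and the modulation expansion of $\Lambda_{\HB, w}$) through the Ionescu--Wainger height budget at the rate $\langle \Log N\rangle^{-cC_1}$ demanded by the conclusion; this is precisely what dictates the parameter hierarchy $C_0 \ll C_1 \ll C_2 \ll C_3$ inherited from \cite{KMT0}.
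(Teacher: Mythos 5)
Your proposal correctly identifies the ingredients (Lemma~\ref{cramer-stable}, Proposition~\ref{comparison}, the Heath-Brown expansion, and the unweighted Peluse--Prendiville machinery of \cite{KMT0}), but the way it assembles them does not work, and it differs in a structurally important way from what the paper does.

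You propose to swap the weight \emph{at the level of the final minor arc estimate}: expand $\Lambda_{\HB,w}$ with $w = \langle\Log N\rangle^{C_1}$ fixed, rewrite $\tilde\Avg_{N,\Lambda_{\HB,w}}(f,g)$ as a sum of $\varphi(q)$-many modulated unweighted averages for each $q<w$, and apply \eqref{single} termwise. This fails the bookkeeping. Summing over $q < w$ (with $\sum_{a\in(\Z/q\Z)^\times}\frac{|\mu(q)|}{\varphi(q)} = O(1)$ per $q$) costs a multiplicative factor of order $w = \langle\Log N\rangle^{C_1}$, which overwhelms the gain $\langle\Log N\rangle^{-cC_1}$ coming from \eqref{single}; there is no free constant $C_2$ to ``take the effective exponent in \eqref{single} to be $C_2 \gg C_1$,'' since $C_1$ in \eqref{single} is the fixed Ionescu--Wainger parameter of the whole argument. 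The direction of your height bookkeeping is also reversed: modulating $f$ by $e(a\cdot/q)$ translates the Fourier vanishing set, and to obtain the hypothesis of \eqref{single} at some level $l'$ one needs $\M_{\leq l', \leq -\Log N + l'} - a/q \subset \M_{\leq l, \leq -\Log N + l}$, which forces $l' < l$ (loss), not $l'' = l + O_\rho(\log w)$ (gain). Finally, the ``small $l$'' regime is not trivial from \eqref{L1}: for $l \lesssim \Log\Log N$, the target bound is still $\langle\Log N\rangle^{-O(c)}$, whereas \eqref{L1} only gives $O(1)$.

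What the paper actually does is swap the weight \emph{inside the inverse theorem} (Proposition~\ref{lip-wt}), where the relevant scale $w$ is coupled to the correlation parameter $\delta$ via $w \lesssim \delta^{-O(1)}$. There the Heath-Brown expansion is handled by pigeonhole: if $|\langle \tilde\Avg_{N,\Lambda_{\HB,w}}(f,g),h\rangle| \gtrsim \delta N^d$, then by crude bounds \emph{one} of the $O(w^2) = \delta^{-O(1)}$-many modulated terms has size $\gtrsim \delta^{O(1)} N^d$, and the unweighted inverse theorem (Proposition~\ref{lip}) is applied only to that single modulation $e(-r\cdot/q)f$. This isolates the dominant arithmetic phase and avoids any divergent sum. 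The $\ell^p$-improving input via \eqref{imp} is then threaded through the Hahn--Banach argument of \cite[\S 6]{KMT0} exactly as you suggest; that part of your outline is correct. To repair your approach you would need to abandon the fixed-$w$ termwise strategy and instead prove a weighted inverse theorem by transfer, as in Proposition~\ref{lip-wt}.
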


As in \cite[(5.22)]{KMT0}, we introduce the scales
$$ l_{(N)} \coloneqq C_0 \Log \Log N$$
and repeat the arguments in \cite[\S 5]{KMT0} all the way to \cite[(5.25)]{KMT0}, inserting the weight $\Lambda_N$ as needed, to reduce to establishing the bound
$$ \| (\tilde \Avg_{N, \Lambda_N}(\Pi_{l_1, \leq -\Log N + l_{(N)}} f, \Pi_{l_2, \leq -d\Log N + dl_{(N)}} g))_{N \in \D; l_1, l_2 \leq l_{(N)}}\|_{\ell^{p_0}(\Z;\V^r)} \lesssim_{C_3} 2^{-\rho l} \|f\|_{\ell^2(\Z)} \|g\|_{\ell^2(\Z)}$$
for all $l_1,l_2 \in \N$ where $l \coloneqq \max(l_1,l_2)$.

Now we fix $l_1,l_2$, and (as in \cite[(5.26)]{KMT0}) introduce the quantity
\begin{equation}\label{u-def} u \coloneqq \lfloor C_2 2^{2\rho l} \rfloor.
\end{equation}
As in \cite[(5.27), (5.28)]{KMT0}, we introduce the frequency-localized functions
\begin{equation}\label{FN-def}
    F^{u,l_1,s_1}_N \coloneqq \begin{cases} \Pi_{l_1, \leq -\Log N + s_1} f - \Pi_{l_1, \leq -\Log N + s_1-1} f, & s_1 > -u \\
   \Pi_{l_1, \leq -\Log N - u} f, & s_1 = -u
   \end{cases}
   \end{equation}
   and
\begin{equation}\label{GN-def}
   G^{u,l_2,s_2}_N \coloneqq \begin{cases} \Pi_{l_2, \leq d(-\Log N+s_2)} g - \Pi_{l_2, \leq d(-\Log N+s_2-1)} g,  & s_2 > -u \\
   \Pi_{l_2, \leq d(-\Log N-u)} g, & s_2 = -u.
   \end{cases}
\end{equation}
for any integers $-u \leq s_1, s_2 \leq l_{(N)}$.  Arguing as in the text up to \cite[Theorem 5.30]{KMT0}, inserting the weight $\Lambda_N$ as necessary, it now suffices to establish the following. 

\begin{theorem}[Variational paraproduct estimates]\label{varp}  Let $l_1, l_2 \in \N$, $l \coloneqq \max(l_1,l_2)$, let $f,g \colon \Z \to \C$ be finitely supported, and define $u$ by \eqref{u-def}.  Let $s_1,s_2 \geq -u$, and then let $F_N \coloneqq F^{u,l_1,s_1}_N$, $G_N \coloneqq G^{u,l_2,s_2}_N$, $\I \coloneqq \I^{l,s_1,s_2}$ be defined respectively by \eqref{FN-def}, \eqref{GN-def}, and
$$ \I \coloneqq \{ N \in \D\colon l, s_1, s_2 \leq l_{(N)} \}.$$
Then
    \begin{multline}\label{all-all-weak}
    \| ( \tilde A_{N,\Lambda_N}( F_N, G_N ))_{N \in \I} \|_{\ell^p(\Z;\V^r)} \\
    \lesssim_{C_3} \langle \max(l,s_1,s_2) \rangle^{O(1)} 2^{O(\rho l)-c \max(l,s_1,s_2) \ind{p_1=p_2=2}}
    \|f\|_{\ell^{p_1}(\Z)} \|g\|_{\ell^{p_2}(\Z)}.
    \end{multline}
\end{theorem}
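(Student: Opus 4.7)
The plan is to follow the proof of \cite[Theorem 5.30]{KMT0} closely, with modifications to accommodate the weight $\Lambda_N$ in place of the constant weight $1$. First I would apply the approximation result (a weighted variant of \cite[Proposition 7.13]{KMT0}, to be established as Proposition~\ref{mod-approx} in Section~\ref{verifying-sec}) to approximate the tuple $(\tilde A_{N,\Lambda_N}(F_N,G_N))_{N\in\I}$ by a tuple of bilinear Fourier multipliers of the form $(\B^{l_1,l_2,m_{\hat\Z^\times}}_{(\varphi_N\otimes\tilde\varphi_N)\tilde m_{N,\R}}(F,G))_{N\in\I}$, where $F,G$ are suitable Fourier localizations of $f,g$ to the major arcs. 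The key structural change from \cite{KMT0} is that the arithmetic symbol $m_{\hat\Z}$ is replaced by $m_{\hat\Z^\times}$, reflecting the fact that $\Lambda_N$ is essentially concentrated on integers coprime to $W$; the approximation error should still gain a polynomial factor in $N$ thanks to the choice of $\Lambda_N$ (which is insensitive to Siegel zeroes by construction).

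Following \cite[\S 7]{KMT0}, I would then split $\I$ into small scales $\I_{\leq}$ and large scales $\I_{>}$ according to whether $N$ is below or above a suitable threshold depending on $l,s_1,s_2$. For the small scales $\I_{\leq}$, the strategy is to apply the two-parameter Rademacher--Menshov inequality (\cite[Corollary 8.2]{KMT0}) to convert the $\V^r$ norm into an essentially maximal-type square function, then appeal to shifted Calder\'on--Zygmund theory (\cite[Theorem B.1]{KMT0}) to reduce matters to an $\ell^{p_1}(\Z)\times\ell^{p_2}(\Z)\to\ell^p(\Z)$ estimate on the single bilinear multiplier $\B^{l_1,l_2,m_{\hat\Z^\times}}_{m_*}$, which in turn follows by combining the weighted minor arc estimate Theorem~\ref{ssmae} with the approximation of Proposition~\ref{mod-approx} (as in \cite[Lemma 8.6]{KMT0}).

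For the large scales $\I_{>}$, I would lift the problem to the adelic integers $\A_\Z$ as in \cite[\S 9]{KMT0}, reducing the task to controlling tuples of the form $(\B_{1\otimes m_{\hat\Z^\times}}(\T_{\varphi_{N,t,j_1}\otimes 1} F_\A, \T_{\tilde\varphi_{N,t,j_2}\otimes 1} G_\A))_{N\in\I_{>}}$. By the factorization and interpolation scheme of \cite[\S 10]{KMT0}, combined with a profinite version of Theorem~\ref{ssmae}, the problem reduces to proving $L^2(\Z_p)\times L^2(\Z_p)\to L^q(\Z_p)$ bounds for the $p$-adic averaging operator $\Avg_{\Z_p^\times}(f,g)$ for some $q>2$ close to $2$, with the operator norm bounded by \emph{exactly} $1$ once $p$ is sufficiently large.

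I expect this sharp $p$-adic bound to be the main obstacle: naively adapting \cite[Corollary C.2]{KMT0} would produce spurious losses of size $1+O(1/p)$ coming from the restriction $n\in\Z_p^\times\subset\Z_p$, which are unaffordable since these constants must be multiplied over all primes. The resolution is to mimic the level-set analysis of $P$ on $\Z_p$ from \cite{KMT0}, but performed on $\Z_p^\times$, and use a $p$-adic incarnation of the minor arc estimate to absorb the boundary contributions; the restriction to units turns out to be a mild perturbation at large primes, where $P(n)$ is generically invertible anyway. Once this $p$-adic estimate is in hand, the remainder of the proof proceeds as in \cite{KMT0} with the same bookkeeping of the factor $\langle\max(l,s_1,s_2)\rangle^{O(1)}2^{O(\rho l)-c\max(l,s_1,s_2)\ind{p_1=p_2=2}}$.
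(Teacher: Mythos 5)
Your high-level plan matches the paper's: both follow the chain of reductions from \cite{KMT0} (approximation of the weighted average by a major-arc bilinear multiplier via Proposition~\ref{mod-approx}, the small-scale/large-scale split, Rademacher--Menshov plus shifted Calder\'on--Zygmund on $\I_{\leq}$, and the adelic lift to $p$-adic estimates on $\I_{>}$), with the arithmetic symbol $m_{\hat\Z}$ consistently replaced by $m_{\hat\Z^\times}$. You also correctly flag the sharp bound $\|\Avg_{\Z_p^\times}\|_{L^2\times L^2\to L^q}\leq 1$ for large $p$ as the crux. One small point: the gain in Proposition~\ref{mod-approx} is only quasipolynomial, $\exp(-\Log^c N)$, not $N^{-c}$; this still beats the $2^{O(\max(2^{\rho l},s))}$ losses because of \eqref{n-large}, so it does not affect the argument, but your statement of a ``polynomial factor'' is slightly off.

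However, your account of how the sharp $p$-adic bound is actually obtained is not the mechanism the paper uses, and as stated it would not close the gap. You suggest ``mimicking the level-set analysis of $P$ on $\Z_p^\times$'' and invoke that ``$P(n)$ is generically invertible.'' But the level-set analysis of \cite[Corollary C.2]{KMT0} only yields the uniform-in-$p$ bound \eqref{avg-1}; it does not produce the constant exactly $1$, and invertibility of $P(n)$ is not the relevant issue. The real obstruction is that restricting $n$ to $\Z_p^\times$ destroys the orthogonality $\Avg_{\Z_p}(f_0,b)=0$ that underpins the mean-zero decomposition $f=a+f_0$, $g=b+g_0$ in \cite[\S 10]{KMT0}. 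The paper resolves this by computing $\Avg_{\Z_p^\times}(f_0,b)=-\tfrac{p}{p-1}bh$, where $h(x)=\E_{n\in\Z_p}f_0(x+n)\ind{p\mid n}$ is a mean-zero function whose $L^2$ and $L^q$ norms are of size $p^{-1}E_f^{1/2}$ and $p^{-1/2-1/q}E_f^{1/2}$ respectively by Young's inequality. Carrying this correction through the Taylor expansion of $|ab+\Avg_{\Z_p^\times}(f,g_0)-\tfrac{p}{p-1}bh|^q$ produces extra terms of size $O(p^{-2}E_f+p^{-q/2-1}E_f^{q/2})$, which are then absorbed together with the $2^{-c_ql}$ gains from the minor arc estimate. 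Without this specific identification and control of $h$, you cannot avoid the unaffordable $(\tfrac{p}{p-1})^q$ loss, and your outline does not supply a substitute.
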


Repeating the proof of \cite[Proposition 5.33]{KMT0}, inserting the weight $\Lambda_N$ as needed, we see that Theorem \ref{varp} already holds in the ``high-high'' case where $s_1,s_2 > -u$ and $p_1=p_2=2$. Thus we may assume that at least one of the statements $s_1=-u$, $s_2=-u$, or $(p_1,p_2) \neq (2,2)$ holds.

We now begin the arguments in \cite[\S 7]{KMT0}.  We introduce the functions
$$ F \coloneqq \Pi_{l_1, \leq -u}f; \quad G \coloneqq \Pi_{l_2, \leq -u} g$$
and note that
$$ F_N = T^{l_1}_{\varphi_N} F; \quad G_N = T^{l_2}_{\tilde \varphi_N} G$$
where
\begin{equation}\label{varphi-def}
    \varphi_N(\xi) \coloneqq \begin{cases} \eta( 2^{\Log N-s_1} \xi) - \eta( 2^{\Log N-s_1+1} \xi),  & s_1 > -u \\
   \eta( 2^{\Log N+u} \xi),  & s_1 = -u
   \end{cases}
   \end{equation}
   and
   \begin{equation}\label{varphip-def}
   \tilde \varphi_N(\xi) \coloneqq \begin{cases} \eta( 2^{d(\Log N-s_2)} \xi) - \eta( 2^{d(\Log N-s_2+1)} \xi), & s_2 > -u \\
   \eta( 2^{d(\Log N+u)} \xi),  & s_2 = -u.
   \end{cases}
   \end{equation}
Repeating the arguments up to \cite[(7.7)]{KMT0}, we thus see that it suffices to show that the tuple
$$ (\tilde A_{N,\Lambda_N}( T^{l_1}_{\varphi_N} F, T^{l_2}_{\tilde{\varphi}_N} G ))_{N \in \I}$$
is ``acceptable'' in the sense that it has an $\ell^{p_0}(\Z;\V^r)$ norm of
$$ \lesssim_{C_3}  \langle \max(l,s_1,s_2)\rangle^{O(1)} 2^{O(\rho l) - c\max(l,s_1,s_2) \ind{p_1=p_2=2}} \|F\|_{\ell^{p_1}(\Z)} \|G\|_{\ell^{p_1}(\Z)} .$$

We introduce the arithmetic symbol $m_{\hat \Z^\times} \colon (\Q/\Z)^2 \to \C$ by the formula
\begin{equation}\label{mhat-def}
m_{\hat \Z^\times}\bigg( \frac{a_1}{q} \mod 1, \frac{a_2}{q} \mod 1 \bigg) = \E_{n \in (\Z/q\Z)^\times} e\left( \frac{a_1 n + a_2 P(n)}{q} \right)
\end{equation}
for any $q \in \Z_+$ and $a_1,a_2 \in \Z$; this differs from the corresponding symbol $m_{\hat \Z}$ in \cite{KMT0} by restricting $n$ to the primitive residue classes of $\Z/q\Z$ rather than all residue classes, which is a key effect of weighting by $\Lambda$.  It is easy to see from the Chinese remainder theorem that $m_{\hat \Z^\times}$ is well-defined, in the sense that replacing $a_1, a_2, q$ by $k a_1, ka_2, kq$ for any positive integer $k$ does not affect the right-hand side of \eqref{mhat-def}. Given any Schwartz function $m \colon \R^2 \to \C$, we then define the twisted bilinear multiplier operator $\B^{l_1,l_2,m_{\hat \Z^\times}}_{m}(f,g)$ for rapidly decreasing $f,g \colon \Z \to \C$ by the formula
\begin{multline*}
    \B^{l_1,l_2,m_{\hat \Z^\times}}_{m}(f,g)(x) \coloneqq \sum_{\alpha_1 \in (\Q/\Z)_{l_1}, \alpha_2 \in (\Q/\Z)_{l_2}} m_{\hat \Z^\times}(\alpha_1,\alpha_2) \\
\quad \times \int_{\R^2} m(\xi_1,\xi_2) \F_\Z f(\alpha_1+\xi_1) \F_\Z g(\alpha_2+\xi_2) e(-x(\alpha_1+\alpha_2+\xi_1+\xi_2))\ d\xi_1 d\xi_2.
\end{multline*}
As in \cite[(7.9)]{KMT0}, we also introduce the continuous symbol $\tilde m_{N,\R} \colon \R^2 \to \C$ by the formula
$$ \tilde m_{N,\R}(\xi_1,\xi_2) \coloneqq \int_{1/2}^1 e(\xi_1 N t + \xi_2 P(N t))\ dt$$
and also the cutoff functions
$$ \eta_{\leq k}(\xi) \coloneqq \eta(\xi/2^k)$$
for any integer $k$ and frequency $\xi \in \R$, where $\eta \colon \R \to [0,1]$ is a fixed smooth even function supported on $[-1,1]$ that equals one on $[-1/2,1/2]$.

In Section~\ref{verifying-sec}, we will prove the following analogue of \cite[Proposition 7.13]{KMT0}.

\begin{proposition}[Major arc approximation of $\tilde A_{N,\Lambda_N}$]\label{mod-approx}  For any $N \geq 1$ and $s \in \N$ with $-\Log N+s \leq -u$, we have
    \begin{multline}\label{norma}
    \left\| \tilde A_{N,\Lambda_N}\left( \Pi_{l_1, \leq -\Log N+s} \tilde F, \Pi_{l_2, \leq -d\Log N+ds} \tilde G \right) - \B^{l_1, l_2, m_{\hat \Z^\times}}_{(\eta_{\leq -\Log N+s} \otimes \eta_{\leq -d\Log N+ds})\tilde m_{N,\R} }(\tilde F,\tilde G) \right\|_{\ell^p(\Z)} \\
    \quad \lesssim_{C_3} 2^{O(\max(2^{\rho l},s))} \exp(-\Log^c N)
    \| \tilde F \|_{\ell^{p_1}(\Z)} \|\tilde G\|_{\ell^{p_2}(\Z)}
    \end{multline}
    for all $\tilde F \in \ell^{p_1}(\Z), \tilde G \in \ell^{p_2}(\Z)$.
    \end{proposition}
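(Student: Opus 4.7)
The strategy is to mirror the proof of \cite[Proposition 7.13]{KMT0}, adapted to the Cram\'er-type structure of $\Lambda_N = \frac{W}{\varphi(W)} \ind{(\cdot, W) = 1}$, where $W = \prod_{p \leq w} p$ and $w = \exp(\Log^{1/C_0} N)$. The key new feature is that the coprimality condition $(n, W) = 1$ built into $\Lambda_N$ automatically forces $(n, q) = 1$ for any $w$-smooth denominator $q$, which is exactly what is needed to produce the restricted arithmetic symbol $m_{\hat \Z^\times}$ in place of $m_{\hat \Z}$.

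The plan is first to unfold both sides of \eqref{norma} using the Ionescu--Wainger Fourier representation of the projections $\Pi_{l_i, \leq \cdot}$, reducing the estimate to a quantitative comparison of bilinear Fourier symbols. The LHS carries the symbol
\[ E_N(\alpha_1, \alpha_2) \coloneqq \E_{n \in [N]} \ind{n > N/2}\, \Lambda_N(n)\, e(n\alpha_1 + P(n)\alpha_2), \]
evaluated at $\alpha_i = a_i/q_i + \xi_i$ with $a_i/q_i \in (\Q/\Z)_{l_i}$ and $|\xi_i|$ controlled by the cutoffs $\eta_{\leq -\Log N + s}$ or $\eta_{\leq -d\Log N + ds}$; the RHS carries $m_{\hat \Z^\times}(a_1/q, a_2/q)\, \tilde m_{N,\R}(\xi_1, \xi_2)$ with $q \coloneqq [q_1, q_2]$. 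Once a quantitative bound on the symbol difference is in hand, the conversion to the $\ell^p$-operator bound proceeds via the IW multiplier calculus of \cite[\S 4, \S 7]{KMT0} as a black box; this conversion is also the source of the polynomial prefactor $2^{O(\max(2^{\rho l}, s))}$, arising from derivative losses on the smooth cutoffs.

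For the symbol comparison, I would decompose the $n$-sum into residue classes modulo $Q \coloneqq [q, W]$. The parameter hierarchy $\rho = 1/C_1 \ll 1/C_0^2$ guarantees that the IW denominators $q$ are much smaller than $w$ and hence $w$-smooth, so $q \mid W^\infty$; consequently $(b, W) = 1$ is equivalent to $(b, q) = 1$ on the $q$-component, and the phase $e((a_1 b + a_2 P(b))/q)$ depends only on $b \bmod q$. A direct Chinese Remainder Theorem calculation, using $\varphi(q)/q = \varphi(q_0)/q_0$ for the squarefree radical $q_0$ of $q$ and $\varphi(W) = \varphi(q_0)\varphi(W/q_0)$, shows that the arithmetic prefactor collapses to \emph{exactly} $m_{\hat \Z^\times}(a_1/q, a_2/q)$ once combined with the Cram\'er normalization $\frac{W}{\varphi(W)}$. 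Simultaneously, for each fixed residue class, the residual sum of the slowly-oscillating factor $e(n\xi_1 + P(n)\xi_2)$ is approximated by an integral via Euler--Maclaurin (equivalently, Poisson summation with respect to the subgroup $Q\Z$), producing the continuous factor $\tilde m_{N,\R}(\xi_1, \xi_2)$ up to an error that is quasipolynomially small in $N$ (since $Q \leq \exp(O(\Log^{1/C_0} N)) = N^{o(1)}$, the discretization error gains a factor $Q/N \leq \exp(-\Log^c N)$ for suitable $c<1$).

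The main obstacle is the parameter bookkeeping that simultaneously (i) keeps the IW denominators $w$-smooth under the hierarchy $1 \lesssim C_0 \lesssim C_1$ with $\rho = 1/C_1$, (ii) controls the Euler--Maclaurin/equidistribution error at the rate $\exp(-\Log^c N)$, and (iii) permits a clean multiplier-theoretic transfer to the $\ell^p$ bound. A secondary technical subtlety, unique to the weighted setting, is to verify that the Cram\'er structure interacts cleanly with the IW framework when $q$ is not squarefree (e.g.\ when $q$ contains prime powers $p^e$ with $e \geq 2$); this is handled by working modulo the lcm $[q, W]$ rather than the product $qW$, with the CRT normalization identities above ensuring the cancellation. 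Importantly, no appeal to Siegel's theorem or Landau--Page is required here, because the approximant $\Lambda_N$ is insensitive to Siegel zeros by construction; this is precisely the payoff of having swapped $\Lambda$ for $\Lambda_N$ early in the argument.
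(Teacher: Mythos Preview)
Your overall architecture is right, and the reduction to symbol estimates together with the observation that the IW denominator $q$ is $w$-smooth (so that $\Lambda_N$ is supported on primitive classes modulo $q$) matches the paper. However, there is a genuine error in the quantitative step. You decompose the $n$-sum into residue classes modulo $Q = [q,W]$ and then claim $Q \leq \exp(O(\Log^{1/C_0} N)) = N^{o(1)}$. This is false: the sieve threshold is $w = \exp(\Log^{1/C_0} N)$, but the primorial is $W = \prod_{p \leq w} p = e^{\theta(w)} \approx e^{w} = \exp(\exp(\Log^{1/C_0} N))$, which is vastly larger than any power of $N$. Consequently $Q \geq W \gg N$, and the Euler--Maclaurin error $Q/N$ is enormous rather than quasipolynomially small. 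Splitting fully modulo $W$ destroys the argument.

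The paper's fix is to split only into residue classes modulo the small period $q$ (which \emph{is} bounded by $\exp(\Log^{1/C_0} N)$, since $q = O_\rho(2^{O(2^{\rho l})})$ and one has the lower bound \eqref{n-large} on $N$). For each $a \in (\Z/q\Z)^\times$ one is then left with estimating
\[
\sum_{n \in I}\Lambda_N(n)\,\ind{n \equiv a\ (q)} \;=\; \frac{W}{\varphi(W)} \sum_{\substack{n \in I\\ n \equiv a\ (q)\\ (n,W)=1}} 1,
\]
and the coprimality constraint $(n,W)=1$ cannot be removed by elementary summation. Instead one invokes the fundamental lemma of sieve theory (packaged as Corollary~\ref{mean}) to obtain $\sum_{n \in I} \Lambda_N(n)\,\ind{n \equiv a\ (q)} = |I|/\varphi(q) + O(\exp(-c\Log^{4/5} N))$, after which a summation-by-parts against the slowly varying phase $e(\xi_1 n + \xi_2 P(n))$ gives the integral $\tilde m_{N,\R}$ with the required quasipolynomial error. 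In short: the missing ingredient in your proposal is the sieve step (Lemma~\ref{fund-lemma} / Corollary~\ref{mean}), which is what allows one to avoid ever working modulo the huge primorial $W$.
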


    This is a slightly weaker type of bound than the corresponding result in \cite{KMT0}, as the polynomial gain of $N^{-1}$ has been reduced to the quasipolynomial gain of $\exp(-\Log^c N)$. However, this is still good enough to dominate the $2^{O(\max(2^{\rho l},s))}$ terms, since from \cite[(7.1)]{KMT0} one has
    \begin{equation}\label{n-large}
        N \geq \max(2^{2^{\max(l,s_1,s_2)/2}}, C_3)
    \end{equation}
    for all $N \in \I$.  Because of this, we can repeat the Fourier-analytic arguments in \cite[\S 7]{KMT0} down to \cite[Theorem 7.23]{KMT0} with the obvious changes, and reduce to showing the acceptability of the small-scale model tuple
    \begin{equation}\label{other-small}
        \Big( \int_{1/2}^1 \B^{l_1,l_2,m_{\hat \Z^\times}}_{m_*}( \T^{l_1}_{\varphi_{N,t}} F, \T^{l_2}_{\tilde \varphi_{N,t}} G)\ dt \Big)_{N \in \I_{\leq}}
       \end{equation}
   and the large-scale model tuple
   \begin{equation}\label{all-all-large}
    \Big(\int_{1/2}^1 \B_{1 \otimes m_{\hat \Z^\times}}(\T_{\varphi_{N,t} \otimes 1} F_\A,\T_{\tilde \varphi_{N,t} \otimes 1} G_\A) \Big)_{N \in \I_{>}}
   \end{equation}
   where
   \begin{itemize}
   \item[(i)] $\I_{\leq} \coloneqq \{ N \in \I\colon N \leq 2^{2^u} \}$ and $\I_{>} \coloneqq \{ N \in \I\colon N > 2^{2^u} \}$;
   \item[(ii)] $m_*(\xi_1,\xi_2) \coloneqq \eta_{\leq -2u}(\xi_1) \eta_{\leq -2du}(\xi_2)$;
   \item[(iii)] $\varphi_{N,t}(\xi) \coloneqq \varphi_N(\xi) e(Nt\xi)$, $\tilde \varphi_{N,t}(\xi) \coloneqq \varphi_N(\xi) e(P(Nt)\xi)$;
   \item[(iv)] The adelic model functions $F_\A \in L^{p_1}(\A_\Z)$, $G_\A \in L^{p_2}(\A_\Z)$ are defined by the formulae
   \begin{equation}\label{vecf-def}
    F_\A(x,y) \coloneqq \sum_{\alpha_1 \in (\Q/\Z)_{l_1}} \int_\R \eta_{\leq -2^{u-1}}(\xi_1) \F_\Z F(\alpha_1 + \xi_1) e( -(\xi_1,\alpha_1) \cdot (x,y) ) \ d\xi_1
    \end{equation}
    and
    \begin{equation}\label{vecg-def}
    G_\A(x,y) \coloneqq \sum_{\alpha_2 \in (\Q/\Z)_{l_2}} \int_\R \eta_{\leq -2^{u-1}}(\xi_2) \F_\Z G(\alpha_2 + \xi_2) e( -(\xi_2,\alpha_2) \cdot (x,y) ) \ d\xi_2
    \end{equation}
    for $x \in \R, y \in \hat \Z$.
   \end{itemize}
   
We can then repeat the integration by parts arguments in the remainder of \cite[\S 7]{KMT0} (replacing $m_{\hat \Z}$ by $m_{\hat \Z^\times}$) and reduce to establishing the small-scale model estimate
    \begin{equation}\label{other-small-2}
    \begin{split}
&   \left\|\left( \B^{l_1,l_2,m_{\hat \Z^\times}}_{m_*}( \T^{l_1}_{\varphi_{N,t,j_1}} F, \T^{l_2}_{\tilde \varphi_{N,t,j_2}} G) \right)_{N \in \I_{\leq}} \right\|_{\ell^p(\Z;\V^r)}\\ &\quad \lesssim_{C_3} \langle \max(l,s_1,s_2) \rangle^{O(1)} 2^{O(\rho l)-c l \ind{p_1=p_2=2}}  \|F\|_{\ell^{p_1}(\Z)} \|G\|_{\ell^{p_2}(\Z)}.
\end{split}
    \end{equation}
and the large-scale model estimate
\begin{equation}\label{all-all-large-2}
\begin{split}
&\left\| \left(\B_{1 \otimes m_{\hat \Z^\times}}(\T_{\varphi_{N,t,j_1} \otimes 1} F_\A,\T_{\tilde \varphi_{N,t,j_2} \otimes 1} G_\A) \right)_{N \in \I_{>}}\right\|_{L^p(\A_\Z; \V^r)} \\
&\quad \lesssim_{C_3} \langle \max(l,s_1,s_2) \rangle^{O(1)} 2^{O(\rho l)-c l \ind{p_1=p_2=2}} \|F_\A\|_{L^{p_1}(\A_\Z)} \|G_\A\|_{L^{p_2}(\A_\Z)}.
\end{split}
\end{equation}
whenever $1/2 \leq t \leq 1$ and $j_1,j_2 \in \{-1,0,+1\}$ are such that
\begin{equation}\label{no-sing}
(s_1,j_1), (s_2,j_2) \neq (-u,-1),
\end{equation}
where
\begin{equation}\label{pntj}
    \varphi_{N,t,j_1}(\xi_1) \coloneqq (2^{-s_1} N \xi_1)^{j_1} \varphi_{N,t}(\xi_1)
    \end{equation}
    and
    \begin{equation}\label{tpntj}
    \tilde \varphi_{N,t,j_2}(\xi_2) \coloneqq (2^{-ds_2} N^d \xi_2)^{j_2} \tilde \varphi_{N,t}(\xi_2).
    \end{equation}
To prove the small-scale argument \eqref{all-all-large-2}, we use the two-dimensional Radamacher--Menshov inequality \cite[Corollary 8.2]{KMT0} by repeating the arguments of \cite[\S 8]{KMT0} (replacing $m_{\hat \Z}$ by $m_{\hat \Z^\times}$), reducing matters to establishing the following single-scale estimate.

\begin{lemma}[Single-scale estimate]\label{ssu}  If $\tilde F \in \ell^{p_1}(\Z), \tilde G \in \ell^{p_2}(\Z)$ have Fourier support on ${\mathcal M}_{l_1, \leq -3u}$ and
    ${\mathcal M}_{l_2, \leq -3du}$ respectively, then
    $$    \| \B^{l_1,l_2,m_{\hat \Z^\times}}_{m_*} (\tilde F, \tilde G) \|_{\ell^p(\Z)} \lesssim_{C_3} 2^{-cl \ind{p_1=p_2=2}} \|\tilde F\|_{\ell^{p_1}(\Z)} \|\tilde G\|_{\ell^{p_2}(\Z)}.$$
\end{lemma}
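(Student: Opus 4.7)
The plan is to adapt the proof of \cite[Lemma 8.6]{KMT0} to the weighted setting, with the replacements $\tilde\Avg_{N,1}\leadsto\tilde\Avg_{N,\Lambda_N}$, $m_{\hat\Z}\leadsto m_{\hat\Z^\times}$, \eqref{single}$\leadsto$Theorem~\ref{ssmae}, and \cite[Proposition 7.13]{KMT0}$\leadsto$Proposition~\ref{mod-approx}. When $(p_1,p_2)\neq(2,2)$ only the bound without the $2^{-cl}$ gain is needed. Since $|m_{\hat\Z^\times}|\leq 1$ pointwise (it is an average of unit exponentials) and $m_*$ is a tensor product of smooth compactly supported bumps, I would expand
\[
\B^{l_1,l_2,m_{\hat\Z^\times}}_{m_*}(\tilde F,\tilde G)(x)=\sum_{\alpha_1\in(\Q/\Z)_{l_1},\,\alpha_2\in(\Q/\Z)_{l_2}} m_{\hat\Z^\times}(\alpha_1,\alpha_2)\,e(-x(\alpha_1+\alpha_2))\,\tilde F_{\alpha_1}(x)\tilde G_{\alpha_2}(x),
\]
where $\tilde F_{\alpha_1},\tilde G_{\alpha_2}$ denote the Fourier pieces near $\alpha_1,\alpha_2$ (on which the cutoffs in $m_*$ act as identity), and conclude via H\"older's inequality together with Ionescu--Wainger square function estimates from \cite[\S 5]{KMT0}.

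For the crucial case $p_1=p_2=2$, $p=1$, the $2^{-cl}$ gain should come from Theorem~\ref{ssmae}. Without loss of generality assume $l=l_1\geq l_2$ (the other case is symmetric via Theorem~\ref{ssmae}(ii)). Setting $N_*\coloneqq 2^{3u+l}$, the Fourier vanishing of $\tilde F$ on $\mathcal{M}_{\leq l-1,\leq -3u}$, together with good-major-arc separation, supplies the hypothesis of Theorem~\ref{ssmae}(i) with $l$ replaced by $l-1$, giving
\[
\|\tilde\Avg_{N_*,\Lambda_{N_*}}(\tilde F,\tilde G)\|_{\ell^1(\Z)}\lesssim_{C_1}\bigl(2^{-c(l-1)}+\langle\Log N_*\rangle^{-cC_1}\bigr)\|\tilde F\|_{\ell^2(\Z)}\|\tilde G\|_{\ell^2(\Z)}.
\]
Since $\Log N_*\gtrsim 2^{2\rho l}$, taking $C_1$ large makes the second term negligible. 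Proposition~\ref{mod-approx} with $s=l$ then replaces $\tilde\Avg_{N_*,\Lambda_{N_*}}(\tilde F,\tilde G)$ by the bilinear multiplier $\B^{l_1,l_2,m_{\hat\Z^\times}}_{(\eta_{\leq-3u}\otimes\eta_{\leq-3du})\tilde m_{N_*,\R}}(\tilde F,\tilde G)$ with $\ell^1$-error bounded by $2^{O(2^{\rho l})}\exp(-\Log^c N_*)\ll 2^{-cl}$.

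The hard part is then to swap the symbol $(\eta_{\leq-3u}\otimes\eta_{\leq-3du})\tilde m_{N_*,\R}$ for the target symbol $m_*=\eta_{\leq-2u}\otimes\eta_{\leq-2du}$. On the Fourier support of the inputs $m_*$ is identically $1$, while $\tilde m_{N_*,\R}(\xi_1,\xi_2)=\int_{1/2}^1 e(\xi_1 N_* t+\xi_2 P(N_* t))\,dt$ genuinely oscillates at scales up to $2^l$ and $2^{dl}$ on the edges of the support, so the two multipliers are not interchangeable on the nose. Using the integral representation, $\B^{l_1,l_2,m_{\hat\Z^\times}}_{(\eta\otimes\eta)\tilde m_{N_*,\R}}$ becomes a $t$-integral of bilinear multipliers with plane-wave symbols, each of which corresponds (modulo cutoffs that are harmless on the relevant Fourier support) to a pointwise product of shifts of $\tilde F$ and $\tilde G$. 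Inverting this representation via duality against $h\in\ell^\infty$ and Minkowski's integral inequality transfers the $\ell^1$ bound to $\B^{l_1,l_2,m_{\hat\Z^\times}}_{m_*}$, mirroring the final paragraphs of \cite[\S 8]{KMT0} essentially verbatim: the only properties of $m_{\hat\Z^\times}$ that enter are its uniform boundedness and its compatibility with the Ionescu--Wainger projections, both of which it shares with $m_{\hat\Z}$. Managing this symbol comparison, and the careful Ionescu--Wainger book-keeping of the small parameters $u$, $\rho$, $l$, $N_*$ so that the error terms stay dominated by $2^{-cl}$, is where I expect the main technical obstacle to lie.
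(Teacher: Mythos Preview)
Your proposal is correct and follows essentially the same approach as the paper: the proof is obtained by repeating \cite[Lemma 8.6]{KMT0} with Proposition~\ref{mod-approx} in place of \cite[Proposition~7.13]{KMT0} and Theorem~\ref{ssmae} in place of \cite[Theorem~5.12]{KMT0}, the switch from $m_{\hat\Z}$ to $m_{\hat\Z^\times}$ being inconsequential and the quasipolynomial (rather than polynomial) error in Proposition~\ref{mod-approx} still being dominated by the required $2^{-cl}$ gain thanks to \eqref{n-large}. Your outline of the $p_1=p_2=2$ case---choosing a scale $N_*$ with $\Log N_*\sim 3u+l$, applying Theorem~\ref{ssmae} to the minor-arc piece, and then using Proposition~\ref{mod-approx} together with the symbol manipulations from the end of \cite[\S 8]{KMT0}---accurately reflects the structure of that argument.
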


But this can be proven by repeating the proof of \cite[Lemma 8.6]{KMT0}, using Proposition \ref{mod-approx} in place of \cite[Proposition 7.13]{KMT0}; the replacement of $m_{\hat \Z}$ with $m_{\hat \Z^\times}$ makes no difference here, and the slight reduction in strength of Proposition \ref{mod-approx} from a polynomial gain in $N$ to a quasipolynomial gain in $N$ is similarly manageable.

It remains to establish the large-scale estimate \eqref{all-all-large-2}.  We repeat the arguments in \cite[\S 9]{KMT0}, replacing $m_{\hat \Z}$ by $m_{\hat \Z^\times}$, and noting that $\B_{1 \otimes m_{\hat \Z^\times}}$ is the tensor product of the identity and the bilinear operator $\Avg_{\hat \Z^\times}$ on the profinite integers defined for $f \colon \Z/Q\Z \to \C$, $g \colon \Z/Q\Z \to \C$ for any $Q$ (which one can also view as functions on $\hat \Z$ in the obvious fashion) by the formula
$$ \Avg_{\hat \Z^\times}(f,g)(x) \coloneqq \E_{n \in (\Z/Q\Z)^\times} f(x+n) g(x+P(n)).$$
These arguments reduce matters to establishing the following analogue of \cite[Theorem 9.9]{KMT0}.

\begin{theorem}[Arithmetic bilinear estimate]\label{bile}  Let $l \in \N$, and let $f, g \in L^2(\hat \Z)$ obey one of the following hypotheses:
    \begin{itemize}
        \item[(i)] $\F_{\hat \Z} f$ vanishes on $(\Q/\Z)_{\leq l}$;
        \item[(ii)] $\F_{\hat \Z} g$ vanishes on $(\Q/\Z)_{\leq l}$.
    \end{itemize}
    Then for any $1 \leq r < \frac{2d}{d-1}$ one has
    $$ \| \Avg_{\hat \Z^\times}(f,g) \|_{L^r(\hat \Z)} \lesssim_{C_3,r} 2^{-c_r l} \| f\|_{L^2(\hat \Z)} \| g \|_{L^2(\hat \Z)}$$
\end{theorem}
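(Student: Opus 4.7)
The plan is to mimic the proof of \cite[Theorem 9.9]{KMT0}, with the one substantive change being that the averaging is over the unit group $(\Z/Q\Z)^\times$ instead of all of $\Z/Q\Z$. Using the Chinese remainder theorem, $\hat\Z$ factors as $\prod_p \Z_p$, and $\Avg_{\hat\Z^\times}$ correspondingly factors as the tensor product $\bigotimes_p \Avg_{\Z_p^\times}$, where
\[\Avg_{\Z_p^\times}(f,g)(x) := \E_{n \in \Z_p^\times} f(x+n) g(x+P(n)).\]
The dual decomposition $\Q/\Z = \bigoplus_p \Q_p/\Z_p$ combined with Plancherel on $\hat\Z$ lets us decompose $f$ and $g$ according to the primary components of their Fourier support. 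The theorem then splits into two tasks: (a) a local $p$-adic $L^2 \times L^2 \to L^q$ bound for each prime with sharp constants, and (b) an orthogonality argument exploiting the Fourier vanishing on $(\Q/\Z)_{\leq l}$ to produce the decay factor $2^{-c_r l}$.

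For (a), the key inequality is
\[\|\Avg_{\Z_p^\times}(f,g)\|_{L^q(\Z_p)} \leq B_p \|f\|_{L^2(\Z_p)} \|g\|_{L^2(\Z_p)}\]
for some $q = q(r) > 2$ and every prime $p$, with $B_p = O(1)$ uniformly and, crucially, $B_p = 1$ for every $p$ larger than a threshold depending only on $P$ and $r$. The exactness of $B_p = 1$ for large $p$ is indispensable: since $\sum_p 1/p$ diverges, even a $1 + O(1/p)$ loss per prime would destroy the final product $\prod_p B_p$. To obtain $B_p = 1$, I would adapt the level-set analysis of $P$ on $\Z_p$ from \cite[Appendix C]{KMT0} together with the $p$-adic analogue of the minor arc estimate~\eqref{single}. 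The change from $\Z_p$ to $\Z_p^\times = \Z_p \setminus p\Z_p$ alters the averaging measure only on a set of Haar probability $1/p$, and can be absorbed into the underlying distributional bounds rather than appearing as a multiplicative loss, provided one tracks the excluded fibre $n \in p\Z_p$ carefully. For the finitely many small primes below the threshold, any crude finite bound for $B_p$ suffices.

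For (b), I convert the Fourier vanishing hypothesis into decay via orthogonality. Under hypothesis (i), the Ionescu--Wainger height of every frequency in the support of $\F_{\hat\Z} f$ exceeds $2^l$; by definition of the height, the prime decomposition of each such $\alpha \in \Q/\Z$ must carry a $p$-part of denominator of size at least $2^{cl}$ at some prime $p$. For those primes the local Fourier content of $f$ lies on non-trivial characters on $\Z_p$, so $L^2$-orthogonality via Plancherel on that coordinate, combined with the tensor-product local bounds, yields an $L^2 \times L^2 \to L^2$ gain of $2^{-cl}$; interpolating this against the $L^2 \times L^2 \to L^q$ bound from (a) extends the decay to the full range $1 \leq r < \frac{2d}{d-1}$, giving $2^{-c_r l}$. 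Hypothesis (ii) is symmetric. The main obstacle I anticipate is step (a), namely establishing the exact equality $B_p = 1$ for all sufficiently large $p$: the minor arc and level-set machinery of \cite[Appendix C]{KMT0} must be redone with the fibre $n \in p\Z_p$ excised, and the bookkeeping required to avoid $1 + O(1/p)$ multiplicative losses in the local operator norm is the technical heart of the argument.
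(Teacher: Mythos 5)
Your high-level plan matches the paper's: tensor-factorize $\Avg_{\hat\Z^\times}$ over primes via the Chinese remainder theorem, reduce to $p$-adic estimates $\|\Avg_{\Z_p^\times}\|_{L^2\times L^2\to L^q}\lesssim 1$ for all $p$ and $\leq 1$ for large $p$, and derive the $2^{-c_r l}$ decay from the Fourier-support hypothesis via the minor arc estimate (the paper uses Theorem~\ref{ssmae} in the role you assign to a ``$p$-adic minor arc estimate'' and follows the route of \cite[\S 10]{KMT0}). You also correctly identify that the uniform $\lesssim_q 1$ bound follows immediately from the pointwise domination $|\Avg_{\Z_p^\times}(f,g)|\leq \frac{p}{p-1}\Avg_{\Z_p}(|f|,|g|)$, and that the real difficulty is obtaining the exact constant $1$ for large $p$.

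However, you leave the crucial step — how exactly to avoid the $1+O(1/p)$ multiplicative loss — unresolved, calling it ``the main obstacle I anticipate.'' The paper's resolution is concrete and worth knowing: it carries over the KMT0 decomposition $f=a+f_0$, $g=b+g_0$ with $a,b$ the constant parts and $f_0,g_0$ mean-zero, with energies $E_f=\|f_0\|_{L^2}^2$, $E_g=\|g_0\|_{L^2}^2$. In the unrestricted case one had $\Avg_{\Z_p}(a,b)=ab$ and $\Avg_{\Z_p}(f_0,b)=0$; here $\Avg_{\Z_p^\times}(a,b)=ab$ still holds exactly, but the cross-term is no longer zero. Instead
$$\Avg_{\Z_p^\times}(f_0,b)=-\tfrac{p}{p-1}\, b\, h,\qquad h(x):=\E_{n\in\Z_p} f_0(x+n)\ind{p\mid n},$$
and the point is that $h$ has mean zero and, by Young's inequality, satisfies $\|h\|_{L^2(\Z_p)}\leq p^{-1}E_f^{1/2}$ and $\|h\|_{L^q(\Z_p)}\leq p^{-1/2-1/q}E_f^{1/2}$. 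When one then Taylor-expands $|\Avg_{\Z_p^\times}(f,g)|^q$ around $|ab|^q$ as in \cite[\S 10]{KMT0}, the $h$-contributions enter only through $\|h\|^2$ and $\|h\|^q$, i.e.\ at order $p^{-2}$ and smaller — \emph{additive} errors, not multiplicative ones, and summable over $p$. Combined with the smallness of $\|\Avg_{\Z_p^\times}(f_0,g_0)\|_{L^1}$, $\|\Avg_{\Z_p^\times}(f,g_0)\|_{L^2}^2$, $\|\Avg_{\Z_p^\times}(f,g_0)\|_{L^q}^q$ (each $\lesssim 2^{-c_q l}$ times the appropriate energies, via Theorem~\ref{ssmae}), this gives $\|\Avg_{\Z_p^\times}(f,g)\|_{L^q}^q\leq (1-E_f)(1-E_g)+O_q(2^{-c_q l}+p^{-2})\leq 1$ for $l$ and $p$ large. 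This explicit correction-term mechanism — isolating $h$ and bounding it in $p$-decaying $L^2$ and $L^q$ norms so that its cost is additive rather than multiplicative — is precisely the technical content your proposal leaves as a gap.
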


Repeating the arguments in \cite[\S 10]{KMT0} up to \cite[(10.3), (10.4)]{KMT0}, using $\Avg_{\hat \Z^\times}$ in place of $\Avg_{\hat \Z}$, and Theorem \ref{ssmae} in place of \cite[Theorem 5.12]{KMT0}, we see that it suffices to establish the $p$-adic bound
\begin{equation}\label{avg-1}
    \| \Avg_{\Z_p^\times} \|_{L^2(\Z_p) \times L^2(\Z_p) \to L^q(\Z_p)} \lesssim_q 1
    \end{equation}
    for all primes $p$, together with the improvement
    \begin{equation}\label{avg-2}
    \| \Avg_{\Z_p^\times} \|_{L^2(\Z_p) \times L^2(\Z_p) \to L^q(\Z_p)} \leq 1
    \end{equation}
    whenever $1 \leq q < \frac{2d}{d-1}$ and $p$ is sufficiently large depending on $q$, where the averaging operator $\Avg_{\Z_p^\times}$ is defined as
$$ \Avg_{\Z_p^\times}(f,g)(x) \coloneqq \E_{n \in \Z_p^\times} f(x+n) g(x+P(n)).$$
Because $\Z_p^\times$ has density $\frac{p-1}{p}$ in $\Z_p$, we have the pointwise bound
\begin{equation}\label{avg-bound}
|\Avg_{\Z_p^\times}(f,g)(x)| \leq \frac{p}{p-1} \Avg_{\Z^p}(|f|, |g|)(x)
\end{equation}
from the triangle inequality, where
$$ \Avg_{\Z_p}(f,g)(x) \coloneqq \E_{n \in \Z_p} f(x+n) g(x+P(n)).$$
Hence \eqref{avg-1} is immediate from \cite[(10.3)]{KMT0}.  It remains to establish \eqref{avg-2}.  As in \cite[\S 10]{KMT0}, we may assume $2 < q < \frac{2d}{d-1}$ and $\|f\|_{L^2(\Z_p)} = \|g\|_{L^2(\Z_p)} = 1$ with $f,g$ nonnegative, in which case our task is to show that
$$ \E_{n \in \Z_p} |\Avg_{\Z_p^\times}(f,g)(x)|^q \leq 1.$$

Applying \eqref{avg-bound} and  the bound $\|\Avg_{\Z_p}(|f|,|g|)\|_{L^q(\mathbb{Z}_p)}\leq 1$ from \cite[\S 10]{KMT0} would cost a factor of $(\frac{p}{p-1})^q$, which is not acceptable here (the product $\prod_p \frac{p}{p-1}$ diverges).  Instead, we follow the arguments in \cite[\S 10]{KMT0}, decomposing $f = a + f_0$, $g = b + g_0$, where $0 \leq a,b \leq 1$, $f_0, g_0$ have mean zero, and the ``energies''
$$ E_f \coloneqq \| f_0 \|_{L^2(\Z_p)}^2; \quad E_g \coloneqq \| g_0 \|_{L^2(\Z_p)}^2$$
obey $0 \leq E_f, E_g \leq 1$ and
$$ |a| = (1 - E_f)^{1/2}; \quad |b| = (1  - E_g)^{1/2}.$$

In the case of $\Avg_{\Z_p}$, we clearly have
$$ \Avg_{\Z_p}(a,b) = ab; \quad \Avg_{\Z_p}(f_0,b) = 0$$
(was observed in~\cite[\S 10]{KMT0}) so that by linearity we have
$$ \Avg_{\Z_p}(f,g) = ab + \Avg_{\Z_p}(f,g_0).$$
For the averaging operator $\Avg_{\Z_p^\times}$ the situation is slightly more complicated; we have
$$ \Avg_{\Z_p^\times}(a,b) = ab; \quad \Avg_{\Z_p^\times}(f_0,b) = -\frac{p}{p-1} b h$$
where $h \colon \Z_p \to \R$ is the function
$$ h(x) \coloneqq \E_{n \in \Z_p} f_0(x+n) \ind{p\mid n}.$$
Since $f_0$ has mean zero, $h$ has mean zero as well.  Furthermore, from Young's convolution inequality one has the bounds
\begin{equation}\label{l2-bounds}
    \|h\|_{L^2(\Z_p)} \leq \|f_0\|_{L^2(\Z_p)} \|\ind{p \mid n} \|_{L^1(\Z_p)} = p^{-1} E_f^{1/2}; \quad \|h\|_{L^q(\Z_p)} \leq \|f_0\|_{L^2(\Z_p)} \|\ind{p \mid n} \|_{L^r(\Z_p)} = p^{-1/2-1/q} E_f^{1/2}
\end{equation}
where $1/q + 1 = 1/2 + 1/r$.

We now have the decomposition
$$ \Avg_{\Z_p^\times}(f,g) = ab + \Avg_{\Z_p^\times}(f,g_0) - \frac{p}{p-1} b h$$
and hence by Taylor the expansion $(x+y)^q=x^q+qx^{q-1}y+O(q^2x^{q-2}y^2)$ (as in \cite[\S 10]{KMT0}) we have
\begin{align*}
     |\Avg_{\Z_p^\times}(f,g)|^q &= |ab|^q + q |ab|^{q-1} (\Avg_{\Z_p^\times}(f,g_0) - \frac{p}{p-1} b h) \\
     \quad & + O_q( |\Avg_{\Z_p^\times}(f,g_0)|^2 + |\Avg_{\Z_p^\times}(f,g_0)|^q + |h|^2 + |h|^q ).
\end{align*}
Since $a,b\in [0,1]$, we can bound $|ab|^q\leq |ab|^2 = (1-E_f)(1-E_g)$. Furthermore, $\frac{p}{p-1} b h$ has mean zero, and $\Avg_{\Z_p^\times}(f,g_0)$ has a mean of at most $\| \Avg_{\Z_p^\times}(f_0,g_0)\|_{L^1(\Z_p)}$ since $\Avg_{\Z_p^\times}(a,g_0)$ has mean zero.  We conclude that
\begin{align*}
    \|\Avg_{\Z_p^\times}(f,g)\|_{L^q(\Z_p)}^q &\leq (1-E_f) (1-E_g) \\
    &\quad + O_q( \| \Avg_{\Z_p^\times}(f_0,g_0) \|_{L^1(\Z_p)} + \| \Avg_{\Z_p^\times}(f,g_0) \|_{L^2(\Z_p)}^2 + \| \Avg_{\Z_p^\times}(f,g_0) \|_{L^q(\Z_p)}^q\\
    &\quad\quad + p^{-2} E_f + p^{-q/2-1} E_f^{q/2}).
\end{align*}
By arguing as in \cite[\S 10]{KMT0} (using Theorem \ref{ssmae} in place of \cite[Theorem 5.12]{KMT0}), we see that if $l$ is any large integer and $p$ is sufficiently large depending on $q$, we have the estimates
\begin{align*}
    \| \Avg_{\Z_p^\times}(f_0,g_0) \|_{L^1(\Z_p)} &\lesssim 2^{-c_q l} E_f^{1/2} E_g^{1/2} \\
    \| \Avg_{\Z_p^\times}(f,g_0) \|_{L^2(\Z_p)}^2 &\lesssim 2^{-c_q l} E_g \\
    \| \Avg_{\Z_p^\times}(f,g_0) \|_{L^q(\Z_p)}^q &\lesssim 2^{-c_q l} E_g^{q/2}
\end{align*}
for some $c_q>0$ depending only on $q$, and hence by the arithmetic mean-geometric mean inequality and the hypothesis $q > 2$ we have
\begin{align*} 
\|\Avg_{\Z_p^\times}(f,g)\|_{L^q(\Z_p)}^q &\leq (1-E_f) (1-E_g) + O_q( (2^{-c_q l} + p^{-2}) (E_f+E_g) )\\
&\leq (1-E_f) (1-E_g) + O_q( (2^{-c_q l} + p^{-2})),
\end{align*}
and the right-hand side is bounded by $1$ for $l$ and $p$ large enough, as required.

To summarize, in order to complete the proof of Theorem \ref{main-thm}, we need to select an approximant $\Lambda_N$ to the weight $\Lambda$ at each scale $N$ that obeys the estimates \eqref{lambda-approx}, \eqref{tau-n}, \eqref{L1}, \eqref{imp}, as well as the single scale minor arc estimate in Theorem \ref{ssmae} and the major arc approximation in Proposition~\ref{mod-approx}.  This will be the focus of the next sections.

\section{Approximants to the von Mangoldt function}

As seen in the previous section, the arguments rely on using an approximant $\Lambda_N$ to the von Mangoldt function $\Lambda$ at scale $N$.  There are several plausible candidates for such approximants, including

\begin{itemize}
\item[(i)] $\Lambda$ itself.
\item[(ii)] A \emph{Cram\'er (or Cram\'er--Granville) approximant}
$$ \Lambda_{\Cramer, w}(n) \coloneqq \frac{W}{\varphi(W)} \ind{(n,W)=1}$$
where
$$ W \coloneqq \prod_{p \leq w} p$$
and $w \geq 1$ is a parameter.
\item[(iii)] A \emph{Heath-Brown approximant}
\begin{equation}\label{hb-def}  
    \Lambda_{\HB,Q}(n) \coloneqq \sum_{q < Q} \frac{\mu(q)}{\varphi(q)} c_q(n) 
\end{equation}
where $Q \geq 1$ is a parameter, and $c_q(n)$ are the Ramanujan sums
\begin{equation}\label{cq-def}  
    c_q(n) \coloneqq \sum_{r \in (\Z/q\Z)^\times} e(-rn/q). 
\end{equation}
\end{itemize}

Other possibilities for approximants exist, including Goldston--Pintz--Y{\i}ld{\i}r{\i}m type approximants $(\log R) \sum_{\ell\mid n} \mu(\ell) \eta(\log \ell/\log R)$ and
$(\log R) (\sum_{\ell\mid n} \mu(\ell) \eta(\log \ell/\log R))^2$ for suitable level parameters $R$ and smooth cutoffs $\eta$, Selberg sieve approximants $(\sum_{\ell\mid n} \lambda_{\ell})^2$, or adjustments to several of the previous approximants by a correction term arising from a Siegel zero, but we will not discuss these other options further here.

The choice (i) (i.e., setting $\Lambda_N \coloneqq \Lambda$) is tempting, particularly in view of recent advances in quantitative understanding of functions such as $\Lambda$ in \cite{TT}, \cite{leng-equidistribution}.  However, it turns out that the presence of a Siegel zero would distort the asymptotics of $\Lambda$ to such an extent that the desired approximation in Proposition \ref{mod-approx} no longer holds with quasipolynomial error terms in $N$, which turns out to significantly complicate the analysis (particularly in the small-scale regime, in which one has to modify the Radamacher--Menshov type arguments significantly).  See Section \ref{Remarks-sec} for further discussion.

The choice (ii) has the advantage of being nonnegative, reasonably well controlled in $\ell^\infty$, and also relatively easy to control in Gowers uniformity norms, and so we shall take such a choice for our approximant $\Lambda_N$; specifically we will set
\begin{equation}\label{lambdan-def}
    \Lambda_N = \Lambda_{\Cramer, \exp(\Log^{1/C_0} N)}.
\end{equation}
However, there is one aspect in which this approximant $\Lambda_N(n)$ is not ideal: it is not exactly equal to a ``Type I sum'' $\sum_{\ell\mid n} \lambda_{\ell}$, where $\lambda_{\ell}$ are weights supported on relatively small values of $d$.  The Heath-Brown approximants $\Lambda_{\HB,Q}$ introduced in (iii) are precisely Type I sums, and so we will switch to those approximants at a certain point in the proof.

In order to achieve these goals, we will need to collect some basic facts about the Cram\'er approximants $\Lambda_{\Cramer, w}$ and the Heath-Brown approximants $\Lambda_{\HB,Q}$, which may be of independent interest. 

\subsection{Bounds on the Cram\'er approximant}

We begin with the Cram\'er approximant.  First we record an easy uniform bound.

\begin{lemma}[Uniform bound on Cram\'er model]\label{cramer-uniform}  If $w \geq 1$, then
    $$ 0 \leq \Lambda_{\Cramer, w}(n) \lesssim \langle \Log w\rangle $$
for all $n \in \Z$.
\end{lemma}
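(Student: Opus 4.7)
The plan is to note that the claim follows from the definition of the Cram\'er approximant together with Mertens' third theorem. Nonnegativity of $\Lambda_{\Cramer, w}(n) = \frac{W}{\varphi(W)} \ind{(n,W)=1}$ is immediate, since both factors are nonnegative. The upper bound is also independent of $n$, because the indicator is bounded by $1$. So everything reduces to showing the deterministic estimate
$$ \frac{W}{\varphi(W)} \lesssim \langle \Log w \rangle. $$

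To prove this, I would first use the multiplicativity of $\varphi$ and the squarefreeness of $W = \prod_{p \leq w} p$ to factorize
$$ \frac{W}{\varphi(W)} = \prod_{p \leq w} \frac{p}{p-1} = \prod_{p \leq w} \left(1 - \frac{1}{p}\right)^{-1}. $$
By Mertens' third theorem, the right-hand side is $(e^{\gamma} + o(1)) \log w$ as $w \to \infty$, and in particular is $O(\log w)$ for $w \geq 2$. For $1 \leq w < 2$ the product is empty and equals $1$, hence bounded by an absolute constant. Since $\log w \lesssim \langle \Log w \rangle$ for $w \geq 2$ (recalling the definition \eqref{log-scale} of $\Log w$), these two cases combine to give the desired bound uniformly in $w \geq 1$.

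There is really no substantive obstacle here: the entire content is one invocation of Mertens' theorem on the product side. If one preferred to avoid quoting Mertens, one could instead use the elementary Chebyshev-type bound $\sum_{p \leq w} \frac{1}{p} = \log \log w + O(1)$ together with $\log(1-1/p)^{-1} = 1/p + O(1/p^2)$ and the convergence of $\sum_p 1/p^2$, but the resulting bound is the same.
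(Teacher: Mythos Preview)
Your proof is correct and essentially identical to the paper's own argument: the paper also reduces to the Mertens bound $\frac{W}{\varphi(W)} = \prod_{p \leq w} \frac{p}{p-1} \lesssim \langle \Log w\rangle$ and declares the lemma immediate.
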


\begin{proof}
    This is immediate from the Mertens theorem bound 
    $$\frac{W}{\varphi(W)} = \prod_{p \leq w} \frac{p}{p-1} \lesssim \langle \Log w\rangle.$$
\end{proof}

The Cram\'er approximant is not easily expressible as an exact Type I sum once $w$ is reasonably large (in particular, larger than $\Log N$), but thanks to the fundamental lemma of sieve theory, it can be approximated by such a sum.

\begin{lemma}[Fundamental lemma of sieve theory]\label{fund-lemma} If $2 \leq w \leq y \leq N^{1/10}$, then there exist weights $\lambda^\pm_{\ell} \in [-1,1]$, supported on $1\leq \ell \leq y$, such that
    $$ \sum_{\ell\mid n} \lambda^-_{\ell} \leq \frac{\varphi(W)}{W} \Lambda_{\Cramer,w}(n) \leq \sum_{\ell\mid n} \lambda^+_{\ell}$$
    for all $n$, and also
    $$ \E_{n \in I} \sum_{\ell\mid n} \lambda^\pm_{\ell} = \frac{\varphi(W)}{W} (1 + O( \exp( - \log y/ \log w   ))$$
    for any interval $I$ of length $N$. In particular,
    $$ \E_{n \in I} \left|\Lambda_{\Cramer,w}(n) - \frac{W}{\varphi(W)} \sum_{\ell\mid n} \lambda^\pm_{\ell}\right| \lesssim \exp( - \log y / \log w ).$$
\end{lemma}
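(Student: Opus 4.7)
The plan is to invoke the classical fundamental lemma of sieve theory (e.g., Brun's pure sieve, or more efficiently the Rosser--Iwaniec beta-sieve of dimension one), which is designed exactly for this kind of statement. The starting point is the Möbius identity
$$ \frac{\varphi(W)}{W}\Lambda_{\Cramer,w}(n) = \ind{(n,W)=1} = \sum_{d \mid (n,W)} \mu(d) = \sum_{\substack{d \mid n \\ d \mid W}} \mu(d), $$
expressing $\tfrac{\varphi(W)}{W}\Lambda_{\Cramer,w}(n)$ as a Möbius sum truncated to squarefree $d$ with all prime factors $\leq w$; this is precisely the classical sifting problem with sifting parameter $w$.

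First I would quote the fundamental lemma to produce weights $\lambda_d^{\pm}$, supported on squarefree $d \leq y$ having all prime factors $\leq w$ (so automatically $d \mid W$), taking values in $[-1,1]$ (indeed in $\{-1,0,1\}$ for the Rosser--Iwaniec sieve), obeying the pointwise bound $\lambda_d^{-} \leq \mu(d) \leq \lambda_d^{+}$ on their support, and satisfying the asymptotic
$$ \sum_d \frac{\lambda_d^{\pm}}{d} = \prod_{p \leq w}\left(1 - \frac{1}{p}\right)\bigl(1 + O(e^{-s})\bigr), \qquad s \coloneqq \frac{\log y}{\log w}. $$
The pointwise sandwich in the lemma then follows immediately from the Möbius identity above, upon noting that the $\lambda_d^{\pm}$ vanish outside divisors of $W$.

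Next, for the mean estimate I would swap the order of summation on an interval $I$ of length $N$: since $|\{n \in I : d \mid n\}| = |I|/d + O(1)$, one obtains
$$ \E_{n \in I}\sum_{d \mid n}\lambda_d^{\pm} = \sum_{d \leq y}\frac{\lambda_d^{\pm}}{d} + O(y/N), $$
and the right-hand side equals $\tfrac{\varphi(W)}{W}\bigl(1 + O(e^{-s})\bigr)$ by the asymptotic above, with the trivial error $O(y/N) \leq O(N^{-9/10})$ absorbed by $e^{-s} = \exp(-\log y/\log w)$ thanks to the hypothesis $y \leq N^{1/10}$. The final $L^1$ claim then follows from the pointwise sandwich: the absolute difference $\bigl|\Lambda_{\Cramer,w}(n) - \tfrac{W}{\varphi(W)}\sum_{d \mid n}\lambda_d^{\pm}\bigr|$ is bounded by $\tfrac{W}{\varphi(W)}\bigl(\sum_{d \mid n}\lambda_d^{+} - \sum_{d \mid n}\lambda_d^{-}\bigr)$, whose mean is $O(\exp(-\log y/\log w))$ by the previous step.

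No serious conceptual obstacle is expected; this is essentially a direct quotation of a textbook sieve-theoretic result. The only points requiring attention are verifying that the standard sieve asymptotic (typically of strength $\exp(-s\log s)$ or $(c/s)^s$) comfortably implies the weaker $\exp(-s)$ appearing in the lemma, and confirming that the parameter range $w \leq y \leq N^{1/10}$ leaves ample room for the trivial error $y/N$ to be absorbed.
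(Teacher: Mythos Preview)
Your approach is correct and essentially identical to the paper's, which simply cites \cite[Lemma 6.3]{Iwaniec} (the fundamental lemma of sieve theory) and says the result follows easily. One minor imprecision: the sieve weights do not satisfy $\lambda_d^- \leq \mu(d) \leq \lambda_d^+$ pointwise in $d$ (indeed for the beta sieve $\lambda_d^\pm$ equals $\mu(d)$ or $0$); rather, the sandwich $\sum_{d\mid (n,W)} \lambda_d^- \leq \ind{(n,W)=1} \leq \sum_{d\mid (n,W)} \lambda_d^+$ holds for the \emph{sums}, which is what you actually use.
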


\begin{proof}  This follows easily from \cite[Lemma 6.3]{Iwaniec}.
\end{proof}

The fundamental lemma can then be used to give many good estimates for the Cram\'er model.

\begin{proposition}[Linear equations in the Cram\'er model]\label{lineq}  Let $t,m \geq 1$ be integers, and let $N \geq 100$.  Let $\Omega\subset [-N,N]^d$ be convex, and let $\psi_1,\dots,\psi_t \colon \Z^m \to \Z$ be linear forms
    $$ \psi_i(\vec n) = \vec n \cdot \dot \psi_i + \psi_i(0) $$
for some $\dot \psi_i \in \Z^m$ and $\psi_i(0) \in \Z$.  Assume that the linear coefficients $\dot \psi_1,\dots,\dot \psi_t \in \Z^m$ are all pairwise linearly independent and have magnitude at most $\exp(\log^{3/5} N)$.  Suppose that  $1 \leq z_i \leq \exp(\Log^{1/10} N)$ for all $i=1,\dots,t$. Then one has
    $$ \sum_{\vec n \in \Omega \cap \Z^m} \prod_{i=1}^t \Lambda_{\Cramer,z_i}(\psi_i(\vec n)) = \mathrm{vol}(\Omega) \prod_p \beta_p + O_{t,m}( N^m \exp(-c \Log^{4/5} N))$$
    for some $c>0$ depending only on $t,m$, where for each $p$, $\beta_p$ is the local factor
    $$ \beta_p \coloneqq \E_{\vec n \in (\Z/p\Z)^m} \prod_{\substack{1 \leq i \leq t\\ p \leq z_i}} \frac{p}{p-1} \ind{\psi_i(\vec n) \neq 0},$$
    where $\psi_i$ is also viewed as a map from $(\Z/p\Z)^m$ to $\Z/p\Z$ in the obvious fashion.  Furthermore, $\beta_p$ obeys the bounds
    \begin{equation}\label{betap} 
        \beta_p = 1 + O_{t,m}(1/p^2)
    \end{equation}
    for all primes $p$ (and $\beta_p=1$ if $p > \max(z_1,\dots,z_t)$).
\end{proposition}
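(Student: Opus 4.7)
The plan is to use the fundamental lemma of sieve theory (Lemma \ref{fund-lemma}) to replace each Cram\'er factor by a Type I divisor sum, reduce the main sum to a lattice-point count in $\Omega$, and then factor out the Euler product via the Chinese remainder theorem. I would set the sieve truncation level $y \coloneqq \exp(\Log^{9/10} N)$ (so that $y \gg z_i$ for each $i$) and apply Lemma \ref{fund-lemma} to obtain upper/lower Type I weights $\lambda_{d,i}^{\pm} \in [-1,1]$ supported on $d \leq y$, with sieve error $O(\exp(-\Log^{4/5}N))$ per factor. Since each $\Lambda_{\Cramer, z_i}$ is nonnegative and bounded pointwise by $O(\langle \Log z_i\rangle)$ via Lemma \ref{cramer-uniform}, a telescoping argument exchanges the product factor-by-factor against the corresponding product of truncated Type I sums, with accumulated replacement error $O_{t,m}(N^m \exp(-c\Log^{4/5}N))$ after summing over $\vec n \in \Omega \cap \Z^m$.

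Expanding the product of divisor sums and swapping sums reduces the main term to $\sum_{\vec d}\prod_i \tfrac{W_i}{\varphi(W_i)}\lambda_{d_i,i}^\pm \cdot N(\vec d)$, where $N(\vec d) \coloneqq |\{\vec n \in \Omega\cap\Z^m : d_i \mid \psi_i(\vec n)\ \forall i\}|$ counts lattice points of the sublattice $L_{\vec d} \coloneqq \{\vec n \in \Z^m : d_i \mid \psi_i(\vec n)\ \forall i\}$ inside $\Omega$. A standard Davenport-type count on the convex body $\Omega \subset [-N,N]^m$ gives $N(\vec d) = \mathrm{vol}(\Omega)/[\Z^m:L_{\vec d}] + O_{t,m}(N^{m-1} C(\vec d))$ for some polynomial factor $C(\vec d)$ in $d_1\cdots d_t$ and $\max_i|\dot\psi_i|$; the magnitude bound $|\dot\psi_i| \leq \exp(\Log^{3/5}N)$ and the divisor bound $d_i \leq \exp(\Log^{9/10}N)$ are tuned so the accumulated error matches $O(N^m \exp(-c\Log^{4/5}N))$. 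Applying CRT to factor $[\Z^m:L_{\vec d}]$ across primes, and then recombining the Type I weights with the resulting local densities, produces $\mathrm{vol}(\Omega)\prod_p \beta_p$ as claimed. As an alternative, one can apply CRT directly to the exact identity $\ind{(n,W_i)=1} = \prod_{p \leq z_i}\ind{p \nmid n}$ and use the sieve only to handle the convergence of the local Euler product in the main term.

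For the local factor estimate $\beta_p = 1 + O_{t,m}(1/p^2)$: when $p > \max_i z_i$ the index set $S = \{i : p \leq z_i\}$ is empty, giving $\beta_p = 1$ trivially. For $p \leq \max_i z_i$ with all $\dot\psi_i$ nonzero and pairwise linearly independent modulo $p$, inclusion-exclusion on $\prod_{i \in S}(1 - \ind{p \mid \psi_i(\vec n)})$ yields $(1-1/p)^{|S|}$ from the non-overdetermined terms of size $|T| \leq m$ (pairwise LI gives rank $r_T = |T|$ here), which exactly cancels the prefactor $(p/(p-1))^{|S|}$ to leave $1$; only the overdetermined terms $|T| > m$ survive, each contributing $O(1/p^m)$. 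The exceptional primes where some $\dot\psi_i$ vanishes modulo $p$ require a separate finite case analysis, absorbed into the $O_{t,m}$ implied constant. The main obstacle I anticipate is the careful parameter tuning between the sieve level $y$, the magnitude bound on $\dot\psi_i$, and the quasipolynomial target error in the lattice-point count, together with the local factor estimate at degenerate primes where pairwise linear independence is lost modulo $p$.
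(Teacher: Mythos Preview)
Your approach is essentially the one the paper invokes: the paper does not give a self-contained argument but cites \cite[Proposition 5.2]{TT}, which proceeds exactly via the fundamental lemma of sieve theory, lattice-point counting in $\Omega$, and CRT factorization into the local densities $\beta_p$, with the extension to variable $z_i$ being routine.

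One correction to your local-factor computation: the claim that pairwise linear independence of the $\dot\psi_i$ forces $r_T = |T|$ for all $|T| \leq m$ is false (three pairwise independent vectors in $\Z^2$ already give a counterexample), so the inclusion--exclusion does not collapse exactly to $(1-1/p)^{|S|}$. The correct argument is coarser: pairwise independence modulo $p$ guarantees only that $r_T \geq 2$ whenever $|T| \geq 2$, so every term with $|T| \geq 2$ contributes $O_{t}(1/p^2)$, while the singletons contribute $-|S|/p$; multiplying by $(p/(p-1))^{|S|} = 1 + |S|/p + O_t(1/p^2)$ then gives $\beta_p = 1 + O_{t,m}(1/p^2)$, which is what \cite[(5.2), (5.5)]{TT} records.
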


\begin{proof}  This is essentially \cite[Proposition 5.2]{TT} (which relies to a large extent on the fundamental lemma of sieve theory).  Strictly speaking, this proposition only covered the case where the $z_i$ were equal to a single parameter $z$ which was also assumed to be at least $2$, but an inspection of the argument shows that it applies without significant difficulty to variable $z_i$ as well, even if some of the $z_i$ are as small as $1$.  The bound \eqref{betap} follows from \cite[(5.2), (5.5)]{TT} (a slightly weaker bound, which also suffices for our application, can be found in \cite[Lemma 1.3]{gt-linear})
\end{proof}

Specializing to the $t=m=1$ case (and noting that the constant coefficients of $\psi_i$ can be large in Proposition~\ref{lineq}), we immediately obtain

\begin{corollary}[Mean value of Cram\'er]\label{mean}  Let $N \geq 100$ and $1 \leq z \leq \exp(\Log^{1/10} N)$, then
$$ \E_{n \in I} \Lambda_{\Cramer,z}(n) = 1 + O(\exp(-c \Log^{4/5} N))$$
for any interval $I$ of length $N$.
In particular, since $\Lambda_{\Cramer,z}(n)$ is nonnegative, we also have
$$ \E_{n \in I} |\Lambda_{\Cramer,z}(n)| = 1 + O(\exp(-c \Log^{4/5} N)).$$
More generally, if $1 \leq q \leq z$ and $a\ (q)$ is a residue class, then
$$ \E_{n \in I} \Lambda_{\Cramer,z}(n) \ind{n=a\ (q)} = \frac{\ind{(a,q)=1}}{\varphi(q)} + O(\exp(-c \Log^{4/5} N)).$$
\end{corollary}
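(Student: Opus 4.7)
The plan is to deduce all three identities as a direct specialization of Proposition \ref{lineq} with $t = m = 1$. For the (most general) third assertion I would take the single linear form $\psi_1(k) \coloneqq a + qk$ with parameter $z_1 \coloneqq z$, and let $\Omega \subset \R$ be the interval of $k \in \R$ satisfying $a + qk \in I$. After shifting $a$ into $[0,q)$, both $q$ and $a$ have magnitude at most $z \leq \exp(\Log^{1/10} N) \leq \exp(\log^{3/5} N)$; the pairwise linear independence condition is vacuous for a single form; and $\Omega$ is an interval of length $N/q + O(1)$ which sits inside $[-N,N]$. The proposition therefore yields
\[ \sum_{k \in \Omega \cap \Z} \Lambda_{\Cramer,z}(a+qk) = \mathrm{vol}(\Omega)\prod_p \beta_p + O\!\left(N \exp(-c \Log^{4/5} N)\right). \]

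The next step is to compute $\prod_p \beta_p$. One has $\beta_p = 1$ for $p > z$, and for $p \leq z$,
\[ \beta_p = \frac{p}{p-1}\, \E_{k \in \Z/p\Z} \ind{p \nmid a+qk}. \]
If $p \nmid q$ then $k \mapsto a+qk$ is a bijection on $\Z/p\Z$, so exactly $p-1$ of the $p$ residues contribute and $\beta_p = \frac{p}{p-1} \cdot \frac{p-1}{p} = 1$. If $p \mid q$ then $a + qk \equiv a \pmod p$ is independent of $k$, giving $\beta_p = \frac{p}{p-1}$ when $p \nmid a$ and $\beta_p = 0$ when $p \mid a$. Consequently $\prod_p \beta_p = 0$ unless $(a,q) = 1$, and in that case (using that every prime divisor of $q$ lies in the range $p \leq q \leq z$) one obtains $\prod_p \beta_p = \prod_{p \mid q} \frac{p}{p-1} = q/\varphi(q)$.

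Combining these facts with $\mathrm{vol}(\Omega) = N/q + O(1)$ gives
\[ \sum_{k \in \Omega \cap \Z} \Lambda_{\Cramer,z}(a+qk) = \frac{N\,\ind{(a,q)=1}}{\varphi(q)} + O\!\left(N \exp(-c \Log^{4/5} N)\right), \]
after absorbing the harmless $O(q/\varphi(q))$ boundary contribution into the quasipolynomial error (using $q \leq \exp(\Log^{1/10} N)$ and shrinking $c$ slightly). Dividing by $|I| = N$ yields the third identity. Specializing to $q = 1$ recovers the first identity, and the second identity follows from the first via the nonnegativity of $\Lambda_{\Cramer,z}$ recorded in Lemma \ref{cramer-uniform}. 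There is no genuine obstacle here; the only subtle point worth flagging is that every prime divisor of $q$ is automatically $\leq z$, which is exactly what forces the Cram\'er weight to vanish on residue classes with $(a,q) > 1$ and makes this vanishing match the vanishing of $\prod_p \beta_p$.
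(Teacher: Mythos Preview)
Your proof is correct and takes essentially the same approach as the paper, which simply records that the corollary follows by specializing Proposition~\ref{lineq} to $t=m=1$ (noting parenthetically that the constant coefficients of the $\psi_i$ may be large). One small point worth tightening: your assertion that $\Omega$ ``sits inside $[-N,N]$'' is not automatic for an arbitrary interval $I$ of length $N$; you should first translate by absorbing a shift into the constant term $\psi_1(0)$ (which Proposition~\ref{lineq} places no size restriction on), after which $\Omega$ becomes an interval of length $N/q$ near the origin. This is precisely the observation the paper flags.
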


As a more sophisticated application of Proposition \ref{lineq}, we record the following improvement of \cite[Proposition 1.2]{TT}.

\begin{lemma}[Improved stability of the Cram\'er model]\label{cramer-stable}  If $1 \leq z,w \leq \exp(\Log^{1/10} N)$, for any $d \ge 1$ one has
$$ \| \Lambda_{\Cramer, w} - \Lambda_{\Cramer, z} \|_{U^{d+1}(I)} \lesssim_{d} w^{-c} + z^{-c}$$
for any interval $I$ of length $N$. In particular, by \eqref{udn},
$$ \| \Lambda_{\Cramer, w} - \Lambda_{\Cramer, z} \|_{u^{d+1}(I)} \lesssim_{d} w^{-c} + z^{-c}.$$
In fact, one can take $c = 1/2^{d+1}$ in these estimates.
\end{lemma}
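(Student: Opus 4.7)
The plan is to establish the stronger inequality
$$\|\Lambda_{\Cramer,w} - \Lambda_{\Cramer,z}\|_{U^{d+1}(I)}^{2^{d+1}} \lesssim_d w^{-1} + z^{-1},$$
from which the lemma follows by taking $2^{d+1}$-th roots. Unpacking the Gowers norm and expanding $\prod_\omega (\Lambda_{\Cramer,w} - \Lambda_{\Cramer,z})(x+\omega\cdot h)$ via multilinearity, the left-hand side becomes a normalized signed sum
\begin{equation*}
    \frac{1}{\|\ind I\|_{U^{d+1}(\Z)}^{2^{d+1}}}\sum_{S\subseteq\{0,1\}^{d+1}}(-1)^{|S^c|}\sum_{\vec n\in\Omega}\prod_{\omega\in S}\Lambda_{\Cramer,w}(\psi_\omega(\vec n))\prod_{\omega\in S^c}\Lambda_{\Cramer,z}(\psi_\omega(\vec n)),
\end{equation*}
where $\psi_\omega(x,h_1,\dots,h_{d+1})\coloneqq x+\omega\cdot h$ and $\Omega\subseteq\Z^{d+2}$ is the convex region cut out by $\psi_\omega(\vec n)\in I$ for all $\omega$.

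Since the $2^{d+1}$ forms $\psi_\omega$ are pairwise linearly independent with $O(1)$ coefficients, we apply Proposition~\ref{lineq} to each inner sum with $m=d+2$, $t=2^{d+1}$, and $z_\omega$ set to $w$ or $z$ according to whether $\omega\in S$. This yields a main term $\mathrm{vol}(\Omega)\prod_p\beta_p^S$ and an error $O_d(N^{d+2}\exp(-c\Log^{4/5}N))$. After normalizing by $\|\ind I\|_{U^{d+1}(\Z)}^{2^{d+1}}\sim_d N^{d+2}$ and summing over the $2^{2^{d+1}}$ choices of $S$, the total error is $O_d(\exp(-c\Log^{4/5}N))$, easily dominated by $w^{-1}+z^{-1}$ since $w,z\leq\exp(\Log^{1/10}N)$.

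It remains to bound $\Sigma\coloneqq\sum_S(-1)^{|S^c|}\prod_p\beta_p^S$ by $O_d(w^{-1}+z^{-1})$. The factor $\prod_{p\leq\min(w,z)}\beta_p^{\mathrm{all}}$ is $S$-independent and of size $O_d(1)$ by \eqref{betap}, while primes $p>\max(w,z)$ give $\beta_p^S=1$. Assuming WLOG $w\leq z$ and setting $B_\omega\coloneqq\prod_{w<p\leq z}\frac{p}{p-1}\ind{\psi_\omega(\vec n^p)\neq 0\mod p}$ with independent uniform $\vec n^p\in(\Z/p\Z)^{d+2}$, Fubini combined with the binomial identity $\sum_T(-1)^{|T|}\prod_{\omega\in T}B_\omega=\prod_\omega(1-B_\omega)$ recasts the signed sum as $\E\bigl[\prod_\omega(1-B_\omega)\bigr]$. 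Each $B_\omega$ has mean $1$, and any pair $(B_\omega,B_{\omega'})$ is independent since $\psi_\omega,\psi_{\omega'}$ are linearly independent. Fourier-expanding $\frac{p}{p-1}\ind{\psi_\omega\neq 0\mod p}=1-\frac{1}{p-1}\sum_{a\neq 0\mod p}e(a\psi_\omega/p)$ and applying orthogonality on $\Z/p\Z$, the nonvanishing contributions come only from subsets $T\subseteq\{0,1\}^{d+1}$ supporting a nontrivial relation $\sum_{\omega\in T}a_\omega\psi_\omega\equiv 0\pmod p$ with all $a_\omega\in(\Z/p\Z)^\times$. The minimal such subsets are $|T|=4$ combinatorial subcubes (the parallelepiped identities $\psi_{\omega_1}+\psi_{\omega_4}=\psi_{\omega_2}+\psi_{\omega_3}$), each contributing $O(p^{-3})$ per prime; summing over primes $p>w$ and over the $O_d(1)$ subcubes gives $|\Sigma|\lesssim_d w^{-2}$, comfortably within the target.

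The main obstacle is the Fourier/combinatorial bookkeeping of $\E\bigl[\prod_\omega(1-B_\omega)\bigr]$: one must catalogue all ``relation-supporting'' subsets $T\subseteq\{0,1\}^{d+1}$ (parallelepipeds and their higher-dimensional analogues), track the signs $(-1)^{|T|}$ arising from the binomial expansion, and verify that the $2^{2^{d+1}}$-fold naive sum collapses to polynomial-in-$w$ decay rather than cancelling in the wrong direction.
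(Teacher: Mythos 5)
Your proof matches the paper's through the reduction to estimating the signed sum $\Sigma=\sum_S(-1)^{|S^c|}\prod_p\beta_p^S$ of local-factor products (Gowers expansion, application of Proposition~\ref{lineq}, absorption of the $\exp(-c\Log^{4/5}N)$ errors), and up to here everything is sound. The divergence is in how you attack $\Sigma$: you Fourier-expand each $\frac{p}{p-1}\ind{\psi_\omega\neq 0\bmod p}$, pass to $\E\bigl[\prod_\omega(1-B_\omega)\bigr]$, and attempt to classify the relation-supporting subsets of the Boolean cube, and you yourself flag the resulting bookkeeping as an unfinished obstacle. That self-assessment is correct: the catalogue of parallelepiped relations and their higher-order analogues, the solution counts over $((\Z/p\Z)^\times)^{|U|}$ for each contributing $U$, and the sign management across $2^{2^{d+1}}$ terms are not carried out in your write-up, so as it stands the argument has a genuine gap. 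Moreover, this route is strictly harder than what is needed.

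The paper closes the gap with an observation that bypasses the Fourier machinery entirely. Take $z\leq w$ without loss of generality. Then $\beta_p^S$ depends on the choice $S$ only for primes $z<p\leq w$: for $p\leq z$ every factor $\frac{p}{p-1}\ind{\psi_\omega\neq 0}$ appears regardless of $S$, and for $p>w$ no factor appears, so $\beta_p^S=1$. Thus $\prod_p\beta_p^S = Y\cdot\prod_{z<p\leq w}\beta_p^S$ with $Y$ manifestly $S$-independent and $|Y|=O_d(1)$ by \eqref{betap}. Using only the crude estimate $\beta_p^S = 1+O_d(p^{-2})$ from \eqref{betap} — no Fourier expansion, no relation cataloguing — one gets $\prod_{z<p\leq w}\beta_p^S = 1+O_d\bigl(\sum_{p>z}p^{-2}\bigr)=1+O_d(1/z)$ uniformly in $S$, and therefore
\[
\Sigma = Y\sum_S(-1)^{|S^c|} + O_d(|Y|/z) = O_d(1/z),
\]
since $\sum_S(-1)^{|S^c|}=0$. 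Taking $2^{d+1}$-th roots yields $c=1/2^{d+1}$. Your finer argument, if carried through, would give $c=2/2^{d+1}$ — a marginal improvement that the lemma neither needs nor claims — and the combinatorial obstacle you identify is precisely the cost of that extra ambition. The fix is to notice that the $S$-dependence of $\prod_p\beta_p^S$ is confined to the window $z<p\leq w$ and that \eqref{betap} alone controls it.
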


The result in \cite[Proposition 1.2]{TT} had an additional term of $\Log^{-c} N$ on the right-hand side.  The removal of this term was already conjectured in \cite[Remark 5.4]{TT}.

\begin{proof} Without loss of generality we may assume that $z \leq w$. Expanding out the expression $\| \Lambda_{\Cramer, w} - \Lambda_{\Cramer, z} \|_{U^{d+1}(I)}^{2^{d+1}}$ into an alternating sum of $2^{d+1}$ terms, it suffices to show that
$$ \sum_{\epsilon \in \{0,1\}^{d+1}}\, \sum_{n,h_1,\dots,h_{d+1} \in \Z} \prod_{j=1}^{d+1} \Lambda_{\Cramer, w_\epsilon} \ind{I}(n + \epsilon_1 h_1 + \dots + \epsilon_{k+1} h_{k+1}) = (X + O( z^{-1} ) ) N^{d+2}$$
for all choices of parameters $w_\epsilon \in \{w,z\}$, where $\epsilon = (\epsilon_1,\dots,\epsilon_{d+1})$ and $X$ is a quantity that is independent of the choice of parameters $w_\epsilon$.  Applying Proposition \ref{lineq}, the left-hand side is
$$ \mathrm{vol}(\Omega) \prod_p \beta_p + O_d( N^{d+2} \exp(-c \Log^{4/5} N))$$
where $\Omega$ is a certain explicit convex polytope of volume $\beta_\infty N^{d+2}$ for some constant $\beta_\infty$ depending only on $d$, and the local factors $\beta_p$ are defined by the formula
$$ \beta_p \coloneqq \E_{n,h_1,\dots,h_{d+1} \in \Z/p\Z} \prod_{\substack{\epsilon \in \{0,1\}^{d+1}\\ p \leq w_\epsilon}} \frac{p}{p-1} \ind{p \nmid n + \epsilon_1 h_1 + \dots + \epsilon_{k+1} h_{k+1}}.$$
The local factors $\beta_p$ are independent of the $w_\epsilon$ if $p \leq w$ or $p > z$.  Thus, by \eqref{betap}, the product $\prod_p \beta_p$ can be written as $Y(1+O(1/z))$ for some $Y$ that is independent of the $w_\epsilon$ parameters, and the claim follows.
\end{proof}

\subsection{Bounds on the Heath-Brown approximant}

We now turn to the Heath-Brown approximants $\Lambda_{\HB,Q}$.  The nice bounds in $\ell^\infty$ or $\ell^1$ one has in Lemma \ref{cramer-uniform} or Corollary  \ref{mean} are unfortunately not available for this approximant.  However, we have reasonable control in other norms such as $\ell^2$, in large part due to a good Type I representation.

\begin{lemma}[Moment bounds for Heath-Brown approximant]\label{bounds}  For any $Q \geq 1$, one has the Type I representation
\begin{equation}\label{lhb}
     \Lambda_{\HB,Q}(n) = \sum_{\substack{\ell\mid n\\ \ell < Q}} \lambda_d
\end{equation}
for some weights $\lambda_{\ell}$ with
\begin{equation}\label{lambdad}
 \lambda_{\ell} \lesssim \langle \Log Q \rangle.
\end{equation}
In particular, we have the pointwise bound
\begin{equation}\label{lambdaq-bound} 
\Lambda_Q(n) \lesssim  \tau(n,Q) \langle \Log Q \rangle
\end{equation}
where $\tau(n,Q)$ is the truncated divisor function
$$ \tau(n,Q) \coloneqq \sum_{\substack{\ell\mid n\\ \ell<Q}}1.$$
Furthermore, we have the moment bounds
\begin{equation}\label{lambdaq-moment} 
    \E_{n \in [N]} |\Lambda_Q(n)|^k \lesssim_k \langle \Log Q\rangle^{2^k+k}
\end{equation}
for any positive integer $k$ and $N \geq 1$.
\end{lemma}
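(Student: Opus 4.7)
The plan is to handle the three claims in sequence, starting from the definition \eqref{hb-def} of $\Lambda_{\HB,Q}$. First, I would derive the Type I representation \eqref{lhb} by inserting the classical Ramanujan-sum identity $c_q(n) = \sum_{d\mid (q,n)} d\,\mu(q/d)$ into \eqref{hb-def} and swapping the order of summation, writing $q=de$ with $d\mid n$, $d<Q$, and $e<Q/d$. Since $\mu(q)\neq 0$ forces $q=de$ squarefree, which in turn forces $(d,e)=1$, both $\mu$ and $\varphi$ factor across $d$ and $e$, yielding the explicit formula
\[ \lambda_d = \frac{d\,\mu(d)}{\varphi(d)} \sum_{\substack{e<Q/d\\ (e,d)=1}} \frac{\mu(e)^2}{\varphi(e)}, \]
which is automatically supported on squarefree $d<Q$.

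Second, to establish \eqref{lambdad}, I would estimate the inner sum by its Euler-product majorant $\prod_{p\leq Q/d,\ p\nmid d}\frac{p}{p-1}$ (obtained by expanding $\mu(e)^2/\varphi(e)=\prod_{p\mid e}(p-1)^{-1}$ and throwing in all admissible primes), and combine it with the factor $d/\varphi(d)=\prod_{p\mid d}\frac{p}{p-1}$. Every prime appearing in either product is bounded by $Q$, so Mertens' theorem yields $|\lambda_d|\leq \prod_{p\leq Q}\frac{p}{p-1}\lesssim \langle \Log Q\rangle$. The pointwise bound \eqref{lambdaq-bound} then follows immediately from the triangle inequality applied to \eqref{lhb}.

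Third, for the moment bound \eqref{lambdaq-moment}, the pointwise control reduces the task to showing $\E_{n\in [N]}\tau(n,Q)^k\lesssim_k \langle \Log Q\rangle^{2^k}$. Expanding $\tau(n,Q)^k=\sum_{d_1,\dots,d_k<Q,\ d_i\mid n} 1$, swapping sums, and bounding the number of $n\leq N$ divisible by $L\coloneqq [d_1,\dots,d_k]$ by $N/L$ leaves me with the purely arithmetic estimate
\[ \sum_{d_1,\dots,d_k<Q} \frac{1}{[d_1,\dots,d_k]} \leq \sum_{L\leq Q^k} \frac{r_k(L)}{L}, \]
where $r_k(L)$ counts ordered $k$-tuples of positive divisors of $L$ with least common multiple equal to $L$. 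The function $r_k$ is multiplicative with $r_k(p^m)=(m+1)^k-m^k$, so its Euler factor at $s=1$ equals $1+(2^k-1)/p+O_k(1/p^2)$; a standard Dirichlet series / Selberg--Delange argument then gives $\sum_{L\leq X} r_k(L)/L \lesssim_k (\log X)^{2^k-1}$, and setting $X=Q^k$ produces the required $\langle \Log Q\rangle^{2^k-1}$, comfortably within the $\langle \Log Q\rangle^{2^k+k}$ budget after multiplication by the $\langle \Log Q\rangle^k$ factor from the coefficients.

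The first two steps are essentially direct manipulations once the Ramanujan-sum identity is in hand; the main obstacle is the last multiplicative estimate, where one must correctly identify the local factor behavior of $r_k$ at the edge of convergence to extract the sharp $(\log Q)^{2^k-1}$ exponent.
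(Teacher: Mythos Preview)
Your proposal is correct. The derivation of the Type~I representation is identical to the paper's. For the bound \eqref{lambdad}, your argument is actually cleaner than the paper's: you observe directly that every prime appearing in either $d/\varphi(d)$ or in the Euler-product majorant of the inner sum is below $Q$, so a single appeal to Mertens suffices; the paper instead runs Rankin's trick against the inner sum and only then recovers the $\varphi(d)/d$ cancellation. For the moment bound, both arguments reduce to estimating $\sum_{d_1,\dots,d_k<Q}[d_1,\dots,d_k]^{-1}$, but you group by $L=[d_1,\dots,d_k]$ and study the multiplicative function $r_k(L)$, while the paper applies Rankin's trick directly to the $k$-fold sum and analyzes the resulting Euler product; your route gives the slightly sharper exponent $2^k-1$ for $\E\tau(n,Q)^k$, at the cost of invoking Selberg--Delange where Rankin's trick on $\sum_L r_k(L)/L$ would already do the job.
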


\begin{proof} Applying the standard identity $c_q(n)=\sum_{\ell\mid (q,n)}\ell \mu(q/\ell)$ and then writing $q=\ell r$, we have
    \begin{align*}
        \Lambda_Q(n) &= \sum_{q<Q} \frac{\mu(q)}{\varphi(q)}\sum_{\ell\mid (q,n)}\ell\mu(q/\ell) \\
        &= \sum_{\substack{\ell\mid n\\ \ell<Q}} \frac{\mu(\ell) \ell }{\varphi(\ell)} \sum_{\substack{r < Q/\ell\\ (\ell,r)=1}} \frac{\mu^2(r)}{\varphi(r)}.
    \end{align*}
    We then take
    $$ \lambda_{\ell} \coloneqq \frac{\mu(\ell) \ell }{\varphi(\ell)} \sum_{\substack{r < Q/\ell\\ (\ell,r)=1}} \frac{\mu^2(r)}{\varphi(r)}.$$
    From Rankin's trick and Mertens's theorem, for any $1\leq d\leq Q$ one has
\begin{align*}
\sum_{\substack{r \leq Q/\ell\\ (d,\ell)=1}} \frac{\mu^2(r)}{\varphi(r)} &\lesssim \sum_{\substack{r\geq 1\\ (\ell,r)=1}} \frac{\mu^2(r)}{\varphi(r) r^{1/\langle \Log Q\rangle}} \\
&\lesssim \prod_{\substack{p\\ p \nmid \ell}} \left(1+\frac{1}{(p-1) p^{1/\langle \Log Q\rangle}}\right) \\
&\lesssim \frac{\varphi(\ell)}{\ell} \prod_p \left(1 + \frac{1}{p^{1+1/\langle \Log Q\rangle}} + O\left(\frac{1}{p^2}\right)\right) \\
&\lesssim  \frac{\varphi(\ell)}{\ell} \langle \Log Q\rangle,
\end{align*}
where we used the Euler product formula and the standard bound $\zeta(\sigma)\sim \frac{1}{\sigma-1}$ for $\sigma>1$ to estimate the product over the primes. This gives \eqref{lambdad}. The bound \eqref{lambdaq-bound} then follows from the triangle inequality.  

Now we turn to \eqref{lambdaq-moment}.   We may assume that $Q \geq 100$, as the claim is trivial otherwise. We allow all implied constants to depend on $k$.
In view of \eqref{lambdaq-bound}, it suffices to establish the bound
    $$ \sum_{n \in [N]} \tau(n,Q)^k \lesssim N \langle \Log Q\rangle^{2^k}.$$
    We expand
    \begin{align*}
        \sum_{n \in [N]} \tau(n,Q)^k = \sum_{n \in [N]} \Bigg(\sum_{\substack{\ell\mid n\\d<Q}}1\Bigg)^k = \sum_{n \in [N]} \sum_{\ell_1,\ldots, \ell_k<Q} 1 = \sum_{\ell_1,\ldots, \ell_k<Q} \frac{N}{[\ell_1,\ldots, \ell_k]}
    \end{align*}
    where $[a_1,\ldots, a_k]$ is the least common multiple of $a_1,\ldots, a_k$. 
    
    Now we apply Rankin's trick.  For $\ell_i<Q$, we have $\ell_i^{1/\langle \Log Q\rangle} = O(1)$, thus
    \begin{align*}
        \E_{n \in [N]} \tau(n,Q)^k &\lesssim \sum_{\ell_1,\ldots, \ell_k} \frac{1}{\ell_1^{1/\log Q}\cdots \ell_k^{1/\langle \Log Q\rangle}[\ell_1,\ldots, \ell_k]}.
    \end{align*}
Factorizing into an Euler product, we conclude that
    \begin{align*}
        \E_{n \in [N]} \tau(n,Q)^k \lesssim \prod_{p}\left(1+\sum_{\substack{a_1,\ldots, a_k\in \{0,1\}\\(a_1,\ldots,a_k)\neq \mathbf{0}}}\frac{1}{p^{1+(a_1+\cdots a_k)/\langle \Log Q\rangle}} +O\left(\frac{1}{p^2}\right)\right)
    \end{align*}
    where $\mathbf{0} \coloneqq (0,\dots,0)$.
    Hence on taking logarithms, it will suffice to show that
    \begin{align*}
        \sum_p \sum_{\substack{a_1,\ldots,a_k\in \{0,1\}\\(a_1,\ldots, a_k)\neq \mathbf{0}}}p^{-1- \frac{a_1+\cdots+ a_k}{\langle \Log Q\rangle}} \leq 2^k \log\log Q + O(1).
    \end{align*}
    From partial summation and the prime number theorem we have
    \begin{align*}
       \sum_{\substack{a_1,\ldots,a_k\in \{0,1\}\\(a_1,\ldots, a_k)\neq \mathbf{0}}}\sum_{p\geq Q}p^{-1- \frac{a_1+\cdots+ a_k}{\langle \log Q\rangle}} &\leq \sum_{\substack{a_1,\ldots,a_k\in \{0,1\}\\(a_1,\ldots, a_k)\neq \mathbf{0}}} \int_{Q}^{\infty}\frac{t^{-1-\frac{a_1+\cdots+a_k}{\langle \Log Q\rangle}}}{\log t}\, dt+O(1)\\
         &\leq 2^k \cdot \int_{Q}^{\infty} t^{-\frac{1}{\langle \log Q\rangle}} \ \frac{dt}{t \log t} +O(1) \lesssim 2^k + O(1).
    \end{align*}

    Moreover, we can use Mertens's theorem to estimate
    \begin{align*}
        \sum_{\substack{a_1,\ldots,a_k\in \{0,1\}\\(a_1,\ldots, a_k)\neq \mathbf{0}}}\sum_{p<Q}p^{-1- \frac{a_1+\cdots+ a_k}{\langle \Log Q\rangle}} \leq 2^k\log \langle \Log Q \rangle +O(1).
    \end{align*}
    Combining these bounds gives the result.
\end{proof}

\subsection{Comparing the Cram\'er and Heath-Brown approximants}

We have a useful comparison theorem between the Cram\'er and Heath-Brown approximants.

\begin{proposition}[Comparison between Cram\'er and Heath-Brown]\label{comparison}  Let $N \geq 1$ and $1 \leq w, Q \leq \exp(\Log^{1/20} N)$, and let $d \geq 1$ be an integer.  Then
    $$ \| \Lambda_{\Cramer,w} - \Lambda_{\HB,Q} \|_{u^{d+1}(I)} \lesssim_d w^{-c} + Q^{-c}$$
    for any interval $I$ of length $N$.
As a consequence, from Lemma \ref{cramer-stable} and the triangle inequality, we also have
$$ \| \Lambda_{\HB,Q_1} - \Lambda_{\HB,Q_2} \|_{u^{d+1}(I)} \lesssim_d Q_1^{-c} + Q_2^{-c}$$
whenever $1 \leq Q_1,Q_2 \leq \exp(\Log^{1/20} N)$.
\end{proposition}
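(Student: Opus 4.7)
My plan is to use the Type I representations of both approximants and perform a Fourier-analytic comparison, exploiting the stability afforded by Lemma \ref{cramer-stable}. By Lemma \ref{cramer-stable} and the triangle inequality, I can first replace $\Lambda_{\Cramer,w}$ by $\Lambda_{\Cramer,Q}$ at cost $w^{-c}+Q^{-c}$ in the $u^{d+1}(I)$ norm, so it suffices to handle the symmetric case $w=Q$.

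For this case, M\"obius inversion gives $\Lambda_{\Cramer,Q}(n)=\frac{W}{\varphi(W)}\sum_{d\mid(W,n)}\mu(d)$ (with $W=\prod_{p\leq Q}p$), while Lemma \ref{bounds} provides $\Lambda_{\HB,Q}(n)=\sum_{d<Q,\,d\mid n}\lambda_d$ with $|\lambda_d| \lesssim \langle \Log Q \rangle$. Both are thus Type I sums of level comparable to $Q$ with logarithmically bounded coefficients. For any polynomial phase $R$ of degree at most $d$, I would expand
\[
\E_{n\in I}(\Lambda_{\Cramer,Q}-\Lambda_{\HB,Q})(n) e(-R(n))
\]
as a linear combination of averages of the form $\E_{n\in I,\,d\mid n} e(-R(n))$, and apply the theorem of Matom\"aki et al.\ \cite{mat1} on the Gowers uniformity of Type I sums (which controls such polynomial phase averages in terms of an explicit singular-series main term, up to a power-saving error in the level). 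Matching the main terms of the two approximants at rational frequencies $a/q$ with $q$ squarefree and all prime factors $\leq Q$ via standard Ramanujan sum identities (and noting that at all such $a/q$ both Fourier transforms equal $\mu(q)/\varphi(q)$), the two expressions agree up to an error of $O(Q^{-c})$.

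The main technical obstacle will be the careful book-keeping of the ``edge'' contributions: rational frequencies $a/q$ where $q$ has prime factors close to $Q$, and divisors $d$ near the cutoff where the two Type I sums have differing supports (for example, non-squarefree $d<Q$ appear in the Heath-Brown sum but not in the Cram\'er one, and squarefree $d \mid W$ with $d > Q$ appear only in the Cram\'er sum). These should be controlled by a Rankin-trick tail estimate in the spirit of the proof of Lemma \ref{bounds}, together with the Mertens bound $W/\varphi(W)\lesssim \langle \Log Q \rangle$, so that the residual weight from these edge regions is absorbed into the $Q^{-c}$ error. The consequent bound on $\| \Lambda_{\HB,Q_1} - \Lambda_{\HB,Q_2} \|_{u^{d+1}(I)}$ then follows as advertised, by inserting $\Lambda_{\Cramer,w}$ for a common auxiliary parameter $w$ and applying the main bound twice together with Lemma \ref{cramer-stable}.
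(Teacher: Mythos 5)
The proposal reaches for the right ingredients (reduce to $w=Q$ via Lemma \ref{cramer-stable}, then compare Type I expansions via the $\cite{mat1}$ machinery), but it contains a genuine gap in how the Cram\'er approximant is handled. You write that M\"obius inversion represents $\Lambda_{\Cramer,Q}(n)=\frac{W}{\varphi(W)}\sum_{d\mid(W,n)}\mu(d)$ and that this is therefore a ``Type I sum of level comparable to $Q$.'' It is not: the divisors $d$ of $W=\prod_{p\leq Q}p$ range up to $W\approx e^{(1+o(1))Q}$, which may vastly exceed $N$ since $Q$ can be as large as $\exp(\Log^{1/20} N)$, and the number of such divisors is $2^{\pi(Q)}$. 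In particular $\sum_{d\mid W}|\mu(d)|$ is astronomically large, so there is no triangle-inequality or Rankin-type tail estimate that bounds the contribution of $d>Q^{O(1)}$: the sum is an alternating one whose value is controlled only through nontrivial cancellation, not by absolute summability. Your appeal to ``a Rankin-trick tail estimate in the spirit of the proof of Lemma \ref{bounds}'' does not apply, because Lemma \ref{bounds} treats a nonnegative, convergent series $\sum_r \mu^2(r)/\varphi(r)$, not the divergent-in-$\ell^1$ alternating sum here. Similarly, ``matching the main terms at rational frequencies $a/q$'' and claiming both Fourier transforms equal $\mu(q)/\varphi(q)$ is only a formal identity — the convergence issue is precisely the content to be proven.

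The missing idea is the fundamental lemma of sieve theory (Lemma \ref{fund-lemma} in the paper), which replaces $\Lambda_{\Cramer,Q}$ by a genuine truncated Type I sum of level $y = \exp(\Log^{1/10} N)$ (bounded by a small power of $N$), with an exponentially small $\ell^1$ error. Only then can one invoke \cite[Proposition 2.1]{mat1} to conclude that the polynomial phase $R$ is major arc with denominator $q\lesssim Q^{O(1)}$. After that, the paper does not directly match Fourier coefficients at all rationals: it subdivides $I$ into short intervals, passes to a $q$-periodic phase $R_0$, evaluates the Cram\'er side via Corollary \ref{mean}, evaluates the Heath-Brown side using the $q$-periodicity and Weyl sum estimates to discard $d\nmid q$, and then controls the remaining discrepancy (from the squarefree $d\mid q$ with $Q\leq d$) via a Cauchy--Schwarz/Plancherel bound on Ramanujan sums. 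You would need to incorporate the fundamental lemma (or an equivalent sieve-theoretic truncation) before any of the subsequent Fourier-analytic comparison can be made rigorous.
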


\begin{proof}  We allow all implied constants to depend on $d$.  In view of Lemma \ref{cramer-stable} and the triangle inequality, it suffices to establish the bound
$$ \| \Lambda_{\Cramer,Q} - \Lambda_{\HB,Q} \|_{u^{d+1}(I)} \lesssim Q^{-c}$$
for any interval $I$ of length $N$,
that is to say it suffices to show that
$$ |\E_{n \in I} (\Lambda_{\Cramer,Q}(n) - \Lambda_{\HB,Q}(n)) e(R(n)) | \lesssim Q^{-c}$$
for any polynomial $R(n) = \sum_{j=0}^d \alpha_j (n-n_I)^d$ of degree at most $d$ with some real coefficients $\alpha_j$, where $n_I$ denotes the midpoint of $I$.  By subdividing $I$ into smaller intervals and using the triangle inequality (adjusting the coefficients $\alpha_j$ as necessary), we may assume without loss of generality that
$$ N \sim \exp(\Log^{20} Q).$$
We can then also assume that $Q$ (and hence $N$) are large, as the claim is trivial otherwise.
In particular $\Log N = \Log^{O(1)} Q$, which in practice will permit us to absorb all logarithmic factors of $N$ in the analysis below.

Fix the polynomial $R$.  We may of course assume without loss of generality that
$$ |\E_{n \in I} (\Lambda_{\Cramer,Q}(n) - \Lambda_{\HB,Q}(n)) e(R(n)) |\geq Q^{-1}.$$
Applying Lemma \ref{fund-lemma} (with $w=Q$ and $y = \exp(\Log^{1/10} N)$) as well as Lemma \ref{bounds}, we thus have
$$\Big| \E_{n \in I} \Big(\sum_{\substack{\ell \leq \exp(\Log^{1/10} N)\\ \ell \mid  n}} \lambda_{\ell}\Big) e(R(n))\Big| \geq Q^{-1}$$
for some weights $\lambda_{\ell}$ of size $O(\Log^{O(1)} N) = O(\Log^{O(1)} Q)$.  Applying \cite[Proposition 2.1]{mat1} (after shifting the summation variable by $n_I$), we conclude that the polynomial $R$ is major arc in the sense that there exists an integer $1 \leq q \lesssim Q^{O(1)}$ such that
$$ \| q \alpha_j \|_{\R/\Z} \lesssim Q^{O(1)} / N^j$$
for all $1 \leq j \leq d$. We may assume that $q\geq Q$ by multiplying $q$ by an integer of size $Q$ if necessary. Thus one can write $R(n) = R_0(n) + E(n)$ where $R_0$ is a polynomial of degree at most $d$ that is periodic with period $q$, and the error $E$ satisfies $\sup_{n\in I}|E(n+1)-E(n)|=O( Q^{O(1)}/N)$.  

Set 
$$w \coloneqq q,\quad W\coloneqq \prod_{p<w}p,$$
thus $Q \leq w \lesssim Q^{O(1)}$.  By Lemma \ref{cramer-stable} and the triangle inequality, it will suffice to show that
$$ |\E_{n \in I} (\Lambda_{\Cramer,w}(n) - \Lambda_{\HB,Q}(n)) e(R(n)) | \lesssim Q^{-c}.$$
Breaking up $I$ into intervals $J$ of length $\sqrt{N}$ and using the slowly varying nature of $E(n)$, it suffices to show that
$$ |\E_{n \in J} (\Lambda_{\Cramer,w}(n) - \Lambda_{\HB,Q}(n)) e(R_0(n)) |\lesssim Q^{-c}$$
for any interval $J$ of length $\sqrt{N}$.  

From Corollary \ref{mean} and the $q$-periodicity of $R_0$ we have
$$ \E_{n \in J} \Lambda_{\Cramer,w}(n) e(R_0(n)) = \E_{n \in (\Z/q\Z)^\times} e(R_0(n)) + O(Q^{-c})$$
(in fact, the error term is significantly better than this).  Using the multiplicativity of the Ramanujan sums $c_q(\cdot)$ and the fact that $c_p(n)=(p-1)\ind{n=0\ (p)}-\ind{n\neq 0\ (p)}$, we have
$$ \sum_{\ell \mid q} \frac{\mu(\ell)}{\varphi(\ell)} c_{\ell}(n) = \prod_{p\mid q} \left(1 - \frac{c_p(n)}{p-1}\right) = \ind{(n,q)=1} \frac{q}{\varphi(q)}.$$
We thus have
\begin{align*} \E_{n \in J} \Lambda_{\Cramer,w}(n) e(R_0(n)) = \sum_{\ell\mid q} \frac{\mu(\ell)}{\varphi(\ell)} \E_{n \in [q]} e(R_0(n)) c_{\ell}(n) + O(Q^{-c}).\end{align*}

Note that for any natural numbers $\ell,a,q$ with $\ell\nmid q$, by the geometric sum formula we have
\begin{align*}
\mathbb{E}_{n\in J}c_{\ell}(n)1_{n\equiv a\pmod q}=\sum_{r\in (\mathbb{Z}/\ell\mathbb{Z})^{\times}}\mathbb{E}_{n\in J}e\left(\frac{rn}{\ell}\right)1_{n\equiv a\pmod q}\ll \ell^2/\sqrt{N}.    
\end{align*}
Therefore, from \eqref{hb-def} and the $q$-periodicity of $e(R_0(n))$, we have
$$ \E_{n \in J} \Lambda_{\HB,Q}(n) e(R_0(n)) = \sum_{\substack{\ell\mid q\\ \ell < Q}} \frac{\mu(\ell)}{\varphi(\ell)} \E_{n \in [q]} e(R_0(n)) c_{\ell}(n) + O(Q^{-c})$$
(again, a better error term is available here). Thus, by the triangle inequality, it suffices to show that
$$ \sum_{\substack{\ell\mid q\\ \ell\geq Q}} \frac{\mu^2(\ell)}{\varphi(\ell)} |\E_{n \in [q]} e(R_0(n)) c_{\ell}(n)| \lesssim Q^{-c}.$$
By the divisor bound, $q$ has at most $Q^{o(1)}$ factors,  so it will suffice to establish the bound
$$ |\E_{n \in [q]} e(R_0(n)) c_{\ell}(n)| \lesssim \varphi(\ell) Q^{-c}$$
for each square-free $\ell\mid q$ with $\ell\geq Q$.  By the triangle inequality, it suffices to show that
$$  \sum_{r \in (\Z/\ell\Z)^\times} |\E_{n \in \Z/q\Z} e(R_0(n) - rn/\ell)| \lesssim \varphi(\ell) Q^{-c}.$$
But from the Plancherel identity (or Bessel inequality) and the fact that $\ell\leq q$, one has
$$  \sum_{r \in (\Z/\ell\Z)^\times} |\E_{n \in \Z/q\Z} e(R_0(n) - rn/\ell)|^2 \leq \frac{\ell}{q}\leq 1,$$
and the claim follows from Cauchy--Schwarz (noting from the hypothesis $\ell\geq Q$ that $\varphi(\ell) \gtrsim Q^{1/2}$, say, so that $\varphi(\ell)^{1/2} \lesssim \varphi(\ell) Q^{-1/4}$).
\end{proof}

\section{Verifying the properties of the approximant}\label{verifying-sec}

Recall the definition of $\Lambda_N$ from~\eqref{lambdan-def}. In this section we verify the properties \eqref{lambda-approx}, \eqref{tau-n}, \eqref{L1}, and \eqref{imp} for $\Lambda_N$, and prove Proposition \ref{mod-approx} and Theorem \ref{ssmae} concerning it.

\textbf{Verifying \eqref{lambda-approx}, \eqref{tau-n}, \eqref{L1}.}
The bound \eqref{L1}  follows from Corollary \ref{mean}, while the bound \eqref{tau-n} follows from Lemma~\ref{cramer-uniform}.
The bound \eqref{lambda-approx} follows for instance from\footnote{Strictly speaking, the results in \cite{mstt} were stated only for $C_0=10$, but an inspection of the arguments reveal that they also apply for larger choices of $C_0$.} \cite[Theorem 1.1(ii)]{mstt} (and could also be extracted from the earlier arguments in \cite{mat1}).

\textbf{Verifying  \eqref{imp}.} 
We need the following weighted analogue of \cite[Proposition 6.21]{KMT0}. 

\begin{lemma}[$L^p$ improving]\label{lp-improv}  Let $Q \in \Z[\n]$ be of degree $d\geq 1$. If $2-c_d < p \leq 2$ for some sufficiently small $c_d>0$, then
$$ \Big\|\mathbb{E}_{n\in [N]}(\Lambda(n)+\Lambda_N(n))f(\cdot+Q(n))\Big\|_{\ell^{2}(\Z)}\lesssim_Q N^{d/2 - d/p} \|f\|_{\ell^p(\Z)}$$
and also for the dual exponent $p'=p/(p-1)$ we have
\begin{align}\label{ppp}
\Big\|\mathbb{E}_{n\in [N]}(\Lambda(n)+\Lambda_N(n))f(\cdot+Q(n))\Big\|_{\ell^{p'}(\Z)}\lesssim_Q N^{d/p' - d/p} \|f\|_{\ell^p(\Z)}.
\end{align}
\end{lemma}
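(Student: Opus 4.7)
The plan is to reduce to the unweighted $L^p$-improving bound from \cite[Proposition 6.21]{KMT0} by exploiting the arithmetic structure of the Cram\'er approximant $\Lambda_N(n) = \frac{W}{\varphi(W)}\ind{(n,W)=1}$ in \eqref{lambdan-def}, where $W = \prod_{p \leq w} p$ and $w = \exp(\Log^{1/C_0} N)$. The bound at $p = 2$ follows immediately from Minkowski's inequality together with \eqref{L1} and the prime number theorem, so by Riesz--Thorin interpolation it suffices to establish the improving bound at a single $p_0 < 2$ close to $2$.

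For the $\Lambda_N$ contribution, the key step is a residue-class decomposition. Writing $n = a + Wm$ with $a \in (\Z/W\Z)^\times$ and $m \in [M_a]$ where $M_a \asymp N/W$, one obtains
$$\E_{n \in [N]} \Lambda_N(n)\, f(x + Q(n)) = \frac{W}{N\varphi(W)} \sum_{a \in (\Z/W\Z)^\times} \sum_{m \in [M_a]} f(x + Q_a(m)),$$
where $Q_a(m) \coloneqq Q(a + Wm)$ has degree $d$. Applying the unweighted improving bound of \cite[Proposition 6.21]{KMT0} to $Q_a$ on $[M_a]$ for each $a$, and summing in $\ell^2$ via the triangle inequality, yields
$$\Big\| \E_{n \in [N]} \Lambda_N(n) f(\cdot + Q(n))\Big\|_{\ell^2(\Z)} \lesssim_Q (N/W)^{d/2 - d/p} \|f\|_{\ell^p(\Z)} = N^{d/2-d/p} \cdot W^{d(1/p-1/2)} \|f\|_{\ell^p(\Z)}.$$
The excess factor $W^{d(1/p-1/2)} = \exp(d(1/p-1/2) \Log^{1/C_0} N)$ is sub-polynomial in $N$, and by taking $c_d$ sufficiently small and interpolating against the trivial $\ell^2 \to \ell^2$ bound, it can be absorbed into the target $N^{d/2-d/p}$. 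The equidistribution input required to make the coefficient $\frac{W}{N\varphi(W)} \cdot M_a$ behave like $\frac{1}{\varphi(W)}$ uniformly in $a$ is precisely Corollary \ref{mean}.

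The same approach handles the $\Lambda$ contribution: $\Lambda$ is essentially supported on primes in $(w, N]$, which lie in reduced residue classes modulo $W$, and the prime-power residual is negligible. Within each residue class the logarithmic weights $\log p \leq \log N$ contribute only a polylogarithmic factor that can be absorbed in the same sub-polynomial slack, and their distribution across residues is controlled by a quantitative Siegel--Walfisz estimate as used in \cite{mstt}. The dual bound \eqref{ppp} is obtained identically by invoking the $\ell^p \to \ell^{p'}$ form of \cite[Proposition 6.21]{KMT0} in each residue class. The principal obstacle is tracking the sub-polynomial losses and verifying uniformity in $a$ of the unweighted improving constant applied to polynomials $Q_a$ with leading coefficient $W^d$ that grows with $N$; this requires inspecting the proof in \cite{KMT0} to confirm the dependence on the leading coefficient is at worst mild enough to be absorbed into the $c_d$-margin of the exponent.
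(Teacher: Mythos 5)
There is a genuine gap, and it is fatal: you have confused the parameter $w$ with the modulus $W = \prod_{p \leq w} p$. You compute the ``excess factor'' as $W^{d(1/p-1/2)} = \exp(d(1/p-1/2)\Log^{1/C_0}N)$, but the right-hand side equals $w^{d(1/p-1/2)}$, not $W^{d(1/p-1/2)}$. Since $\log W = \vartheta(w) \sim w = \exp(\Log^{1/C_0}N)$ by the prime number theorem, $W$ is actually \emph{super-polynomial} in $N$: indeed $\log W \approx \exp((\log_2 N)^{1/C_0})$ exceeds $\log N$ once $N$ is large, so $W > N$ and the residue classes mod $W$ intersect $[N]$ in at most one point. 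The decomposition $n = a + Wm$ therefore buys you nothing, the intervals $[M_a]$ are empty or singletons, and the ``loss'' $W^{d(1/p-1/2)}$ is not absorbable — it dwarfs the gain $N^{d/2-d/p}$. The paper avoids exactly this trap: it never decomposes into residues mod $W$.

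The paper's argument instead defines $\Avg^Q_{N,w}$ with the general Cram\'er weight $\Lambda_{\Cramer,w}$, observes that the $w=1$ case is exactly the unweighted improving estimate of \cite[Proposition 6.21]{KMT0}, and controls the increments $\Avg^Q_{N,w}-\Avg^Q_{N,z}$ for $z\in[w/2,w]$: the $\ell^2\to\ell^2$ operator norm of the increment is $\lesssim z^{-c}$ by Plancherel together with the Gowers-norm stability of Cram\'er approximants (Lemma \ref{cramer-stable}), while the crude $\ell^p\to\ell^{p'}$ bound for each $\Avg^Q_{N,w}$ loses only a factor $\langle\Log w\rangle$ from the uniform bound of Lemma \ref{cramer-uniform}. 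Interpolating these two gives an $\ell^p\to\ell^{p'}$ bound for the increment with a $z^{-c'}$ gain, which sums geometrically when one telescopes dyadically from $w=1$ up to $\exp(\Log^{1/C_0}N)$. The $\Lambda$ case is handled the same way, with the crude bound $\langle\Log N\rangle$ and the comparison $\E_{n\in[N]}(\Lambda-\Lambda_{\Cramer,z})(n)e(Q(n))\lesssim z^{-c}$. If you want to salvage your residue-class idea you would have to work to a much smaller modulus, but $\Lambda_N$ is genuinely a function mod $W$, so this reintroduces exactly the kind of incremental comparison the paper uses; there is no shortcut through a direct triangle-inequality decomposition mod $W$.
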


The value of $c_d$ here could be explicitly computed, but we do not attempt to optimize it here.  After Lemma~\ref{lp-improv} has been proven, \eqref{ppp} together with the non-negativity of $\Lambda_N$ immediately implies the required estimate~\eqref{imp}.

\begin{proof}  
By interpolation (adjusting $c_d$ as necessary), it suffices to show the second estimate \eqref{ppp}.

For any polynomial $Q(\n) \in \Z[\n]$, we define the averaging operators $\Avg^{Q,0}_N, \ \Avg^Q_N \colon \ell^p(\Z) \to \ell^p(\Z)$ by the formulas
$$ 
 \Avg^{Q,0}_N f(x) \coloneqq \E_{n \in [N]} f(x+Q(n)) \Lambda(n), \; \; \; 
\Avg^Q_N f(x) \coloneqq \E_{n \in [N]} f(x+Q(n)) \Lambda_N(n).$$ First, the operators $\Avg^Q_N, \Avg^{Q,0}_N$ are bounded on every $\ell^p(\Z)$ thanks to \eqref{L1} and the triangle inequality. With this notation, it suffices to show that
\begin{align}\label{ppp1}\begin{split}
\| \Avg^Q_N f \|_{\ell^{p'}(\Z)}  &\lesssim_Q N^{d/p' - d/p} \|f\|_{\ell^p(\Z)},\\
\| \Avg^{Q,0}_N f \|_{\ell^{p'}(\Z)} &\lesssim_Q N^{d/p' - d/p} \|f\|_{\ell^p(\Z)}.
\end{split}
\end{align}
    
We can write $\Avg^Q_N = \Avg^Q_{N, \exp(\Log^{1/C_0} N)}$ where
$$ \Avg^Q_{N,w} f(x) \coloneqq \E_{n \in [N]} f(x+Q(n)) \Lambda_{\Cramer, w}(n).$$
On the one hand, from Lemma \ref{cramer-uniform} and the results in \cite{koselacey} (see also \cite[Proposition 6.21]{KMT0}) we have
\begin{equation}\label{tnw}
 \| \Avg^Q_{N,w} f \|_{\ell^{p'}(\Z)} \lesssim_Q N^{d/p' - d/p} \langle \Log w\rangle \|f\|_{\ell^p(\Z)}
\end{equation}
for any $2-c < p \leq 2$ (where $c>0$ depends on $d$ and can vary from line to line). On the other hand, from Lemma \ref{cramer-stable} we have
\begin{align}\label{eq:lambdaz}
\E_{n \in [N]} (\Lambda_{\Cramer, w} - \Lambda_{\Cramer, z})(n) e(Q(n)) \lesssim_d z^{-c}
\end{align}
for any $1 \leq z \leq w \leq \exp(\Log^{1/C_0} N)$.

By the Plancherel theorem, this implies that
\begin{align*} \| \Avg^Q_{N,w} f - \Avg^Q_{N,z} f \|_{\ell^2(\Z)}&=\left(\int_{0}^1\left|\sum_{x\in \mathbb{Z}}(\Avg^Q_{N,w} f - \Avg^Q_{N,z} f)(x)e(\theta x)\right|^2\, d\theta\right)^{1/2}\\
& \lesssim_d z^{-c} \left(\int_{0}^1\left|\sum_{x\in \mathbb{Z}}f(x)e(\theta x)\right|^2\, d\theta\right)^{1/2}\\
& \lesssim_d z^{-c} \|f\|_{\ell^2(\Z)}.
\end{align*}

Interpolating (and reducing $c$ as necessary), we see that if $2-c \leq p \leq 2$, then
$$ \| \Avg^Q_{N,w} f - \Avg^Q_{N,z} f \|_{\ell^{p'}(\Z)} \lesssim_Q N^{d/p' - d/p} z^{-c} \|f\|_{\ell^p(\Z)}$$
if $1 \leq z \leq w \leq \exp(\Log^{1/C_0} N)$ is such that $w^{1/2} \leq z$.  Summing this bound telescopically for suitable values of $z, w$, we conclude from the triangle inequality that
$$ \| \Avg^Q_{N} f - \Avg^Q_{N,1} f \|_{\ell^{p'}(\Z)} \lesssim_Q N^{d/p' - d/p} \|f\|_{\ell^p(\Z)}.$$
Combining this with the $w=1$ case of \eqref{tnw}, we obtain the first estimate in~\eqref{ppp1}. 

The second estimate in~\eqref{ppp1} follows similarly, except that in the proof we replace~\eqref{tnw} with 
\begin{equation*}
 \| \Avg^{Q,0}_N f \|_{\ell^{p'}(\Z)} \lesssim_Q N^{d/p' - d/p} \langle \Log N\rangle \|f\|_{\ell^p(\Z)}
\end{equation*}
and replace~\eqref{eq:lambdaz} with 
\begin{align*}
\E_{n \in [N]} (\Lambda - \Lambda_{\Cramer, z})(n) e(Q(n)) \lesssim_d z^{-c}
\end{align*}
and use the first estimate in~\eqref{ppp1}.
\end{proof}

\textbf{Proof of Proposition \ref{mod-approx}.} Arguing as in the proof of \cite[Proposition 7.13]{KMT0}, Proposition~\ref{mod-approx} reduces to establishing the symbol estimates
$$
\Big|\frac{\partial^{j_1}}{\partial \xi_1^{j_1}} \frac{\partial^{j_2}}{\partial \xi_2^{j_2}} M_0( (\alpha_1,\xi_1), (\alpha_2,\xi_2))\Big|
\lesssim_{C_3} 2^{O(\max(2^{\rho l},s))} N^{j_1+dj_2} \exp(-\Log^c N)$$
for $0 \leq j_1,j_2 \leq 2$, $\alpha_1 \in (\Q/\Z)_{l_1}$, $\alpha_2 \in (\Q/\Z)_{l_2}$, and $\xi_1 = O( 2^{s}/N)$, $\xi_2 = O( 2^{ds}/N^d)$, where the symbol $M_0$ is defined by the formula
\begin{multline*}
    M_0( (\alpha_1,\xi_1), (\alpha_2,\xi_2)) \\
    \coloneqq \E_{n \in [N]} e(\alpha_1 n + \alpha_2 P(n)) e(\xi_1 n + \xi_2 P(n)) \Lambda_N(n) \ind{n>N/2}  - m_{\hat \Z^\times}(\alpha_1,\alpha_2) \tilde m_{N,\R}(\xi_1,\xi_2).
\end{multline*}
As in the proof of \cite[Proposition 7.13]{KMT0}, the function $n \mapsto e(\alpha_1 n + \alpha_2 P(n))$ is periodic of some period
\begin{equation}\label{qmag}
    q = O_\rho(2^{O(2^{\rho l})}).
\end{equation}
In particular, from \eqref{n-large} one has
$$ q \leq \exp(\Log^{c_0} N)$$
and hence $q$ divides $W$.  So the function $\Lambda_N(n)$ vanishes outside of the primitive residue classes modulo $q$.  Meanwhile, we have
$$ m_{\hat \Z^\times}(\alpha_1,\alpha_2) = \E_{a \in (\Z/q\Z)^\times} e(\alpha_1 a + \alpha_2 P(a)).$$
By the triangle inequality, it thus suffices to show for each $a \in (\Z/q\Z)^\times$ that
\begin{align*}
    &\Big| \frac{\partial^{j_1}}{\partial \xi_1^{j_1}} \frac{\partial^{j_2}}{\partial \xi_2^{j_2}} (\E_{n \in [N]} e(\xi_1 n + \xi_2 P(n)) \Lambda_N(n) \ind{n=a\ (q)} \ind{n>N/2}  - \frac{1}{\varphi(q)} \tilde m_{N,\R}(\xi_1,\xi_2)) \Big| \\
    &\quad \lesssim_{C_3} 2^{O(\max(2^{\rho l},s))} N^{j_1+dj_2} \exp(-\Log^c N).
\end{align*}
Evaluating the derivatives, it suffices to show that
$$ \Big| \sum_{n \in [N] \backslash [N/2]} w(n) \ind{n=a\ (q)} \Lambda_N(n) - \frac{1}{\varphi(q)} \int_{N/2}^N w(t)\ dt \Big| \lesssim_{C_3} 2^{O(\max(2^{\rho l},s))} N^{j_1+2j_2+1} \exp(-\Log^c N),$$
where
$$ w(t) \coloneqq e(\xi_1 t + \xi_2 P(t)) t^{j_1} P(t)^{j_2}.$$
The function $w$ is smooth with a total variation of $O( 2^{O(\max(2^{\rho l},s))} N^{j_1+2j_2})$.  Summing (or integrating) by parts as in \cite[Lemma 2.2(iii)]{mstt}, it suffices to show that
$$\Big| \sum_{n \in I} \big(\ind{n=a\ (q)} \Lambda_N(n) - \frac{1}{\varphi(q)} |I|\big) \Big| \lesssim_{C_3} N  \exp(-\Log^c N)$$
for all intervals $I$ in $[N,2N]$.  But this follows from Corollary \ref{mean}.

\textbf{Proof of Theorem \ref{ssmae}.} The last remaining task is to establish the single-scale estimate in Theorem \ref{ssmae}.  We first recall an application of Peluse--Prendiville theory.

\begin{proposition}[Unweighted inverse theorem]\label{lip}  Let $N \geq 1$ and $0 < \delta \leq 1$, and let $N_0$ be a quantity with $N_0 \sim N^d$.  Let $f,g,h \colon \Z \to \C$ be be supported on $[-N_0,N_0]$ with 
    \begin{equation}\label{fgh-bound}
        \|f\|_{\ell^\infty(\Z)}, \|g\|_{\ell^\infty(\Z)}, \|h\|_{\ell^\infty(\Z)} \leq 1,
    \end{equation}
     obeying the lower bound
    \begin{equation}\label{hyp}
     |\langle \tilde \Avg_{N,1}(f,g), h \rangle| \geq \delta N^d.
     \end{equation}
     Then there exists a function $F \in \ell^2(\Z)$ with 
     \begin{equation}\label{F-bounds}
    \|F\|_{\ell^\infty(\Z)} \lesssim 1; \quad \|F\|_{\ell^1(\Z)} \lesssim N^d
    \end{equation}
    and with $\F_{\Z}F$  supported in the $O(\delta^{-O(1)}/N)$-neighborhood of some rational $a/b \mod 1 \in \Q/\Z$ with $b = O(\delta^{-O(1)})$ such that
    \begin{equation}\label{F-corr}
    |\langle f, F \rangle| \gtrsim \delta^{O(1)} N^d.
    \end{equation}
Here we use the inner product $\langle f, F \rangle \coloneqq \sum_{n \in \Z} f(n) \overline{F(n)}$.
\end{proposition}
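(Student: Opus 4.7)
My plan is to deduce this proposition from the Peluse--Prendiville inverse theorem for bilinear polynomial averages \cite{PP}, followed by a routine repackaging to obtain the claimed Fourier localization around a rational with small denominator. The key point driving the quantitative shape of the conclusion is that the ``linear'' iterate $T^n$ ranges over scale $[N]$, which dictates both the denominator bound $b = O(\delta^{-O(1)})$ and the Fourier localization width $O(\delta^{-O(1)}/N)$, even though $f$ itself lives at scale $N^d$.

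First, I would absorb the sharp cutoff $\ind{n > N/2}$ present in $\tilde \Avg_{N,1}$ by a smooth partition of unity, so that the correlation hypothesis \eqref{hyp} transfers, with only a harmless constant loss, to a bilinear average with a smooth weight supported in $[N/4, N]$. Then I would apply Peluse's generalized von Neumann / degree-lowering theorem: an iterated PET van der Corput argument combined with Peluse's degree-lowering trick converts the large bilinear correlation into a statement that $f \chi_J$ has $u^2$-norm $\gtrsim \delta^{O(1)}$ for a positive proportion of translates $J$ of a length-$N$ interval inside $[-N_0, N_0]$. This reduction from degree-$d$ Gowers control to degree-$2$ (linear) control on the short scale $N$ is the crucial input, and is also where essentially all of the difficulty resides.

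Next, I would apply the elementary $u^2$-inverse theorem to each scale-$N$ window: a correlation $|\E_{n \in J} f(n) e(-\alpha n)| \gtrsim \delta^{O(1)}$ together with Dirichlet's theorem on rational approximation forces the existence of integers $a, b$ with $1 \leq b \lesssim \delta^{-O(1)}$ and $\|\alpha - a/b\|_{\R/\Z} \lesssim \delta^{-O(1)}/N$. Pigeonholing over the $O(\delta^{-O(1)})$ candidate rationals $a/b$, one finds a single such rational capturing a $\delta^{O(1)}$-fraction of the correlations across all windows. I would then define
\[ F(n) \coloneqq e\bigl((a/b) n\bigr) \, \chi(n), \]
where $\chi$ is a smooth bump adapted to $[-N_0, N_0]$ further mollified at scale $\delta^{O(1)}/N$, so that $\F_\Z F$ is supported in the $O(\delta^{-O(1)}/N)$-neighborhood of $a/b \bmod 1$. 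The bounds \eqref{F-bounds} are then immediate from $|e((a/b)n)|=1$ and the support of $\chi$, while the correlation lower bound \eqref{F-corr} follows from concatenating the windowed correlations established above.

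The main obstacle is the degree-lowering step, which is the nontrivial heart of \cite{PP} and in particular \cite[Theorem 3.3]{Peluse}; without it, one would only obtain a higher-order Gowers $U^{d}$-norm lower bound on $f$, which would yield a nilsequence correlation rather than the linear-phase/major-arc correlation required here. Every other step -- handling the sharp cutoff, invoking Dirichlet, pigeonholing the rational, and the Fourier-analytic construction of $F$ -- is routine bookkeeping once the inverse theorem is available as a black box.
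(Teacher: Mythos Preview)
The paper does not prove this proposition; it simply invokes \cite[Proposition 6.6]{KMT0} as a black box. Your sketch correctly identifies the Peluse--Prendiville degree-lowering machinery as the substance behind that result, and the overall shape (reduce the bilinear correlation to major-arc Fourier information about $f$ at the linear scale $N$, then package this as a function $F$) matches what is done in \cite{KMT0}.

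There is one genuine wrinkle in your final step. Taking $F(n)=e((a/b)n)\,\chi(n)$ for a fixed bump $\chi$ need not give \eqref{F-corr}: the windowed correlations you produce are with phases $e(\alpha_J n)$ where $\alpha_J$ may range over the full arc of width $\delta^{-O(1)}/N$ around $a/b$, and these do not concatenate coherently against a single frequency. (Take $f(n)=e(\alpha n)$ with $\alpha$ near the edge of the arc: every window correlates strongly, yet $\langle f, e((a/b)\cdot)\chi\rangle$ can be small because the phase $e((\alpha-a/b)n)$ rotates many times across $[-N_0,N_0]$.) The clean route, and the one taken in \cite{KMT0}, is to use Peluse's conclusion in its native form---namely that $f$ correlates with a normalized arithmetic-progression kernel of modulus $q\lesssim\delta^{-O(1)}$ and length $\gtrsim\delta^{O(1)}N$---then pigeonhole over the $O(\delta^{-O(1)})$ arcs centered at $j/q$ and dualize against a $1$-bounded function to produce $F$. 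This sidesteps the phase-coherence issue entirely and yields \eqref{F-bounds} and \eqref{F-corr} directly.
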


\begin{proof}  See \cite[Proposition 6.6]{KMT0}.
\end{proof}

We now transfer this to the weighted setting, under an additional (mild) largeness hypothesis on $\delta$.

\begin{proposition}[Weighted inverse theorem]\label{lip-wt}  Let $N \geq 1$ and $\exp(-\Log^{1/C_0} N) \leq \delta \leq 1$, and let $N_0$ be a quantity with $N_0 \sim N^d$.  Let $f,g,h \colon \Z \to \C$ be be supported on $[-N_0,N_0]$, obeying \eqref{fgh-bound} and the lower bound
    \begin{equation}\label{hyp-wt}
     |\langle \tilde \Avg_{N,\Lambda_N}(f,g), h \rangle| \geq \delta N^d.
     \end{equation}
     Then the conclusions of Proposition \ref{lip} hold.
\end{proposition}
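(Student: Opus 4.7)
The plan is to reduce the weighted inverse theorem to the unweighted one (Proposition~\ref{lip}) by swapping $\Lambda_N$ with the Heath-Brown approximant $\Lambda_{\HB,Q}$ at a modest level $Q$ depending on $\delta$, after which the Type~I structure \eqref{lhb} allows one to factor out arithmetic progressions and reduce to Peluse--Prendiville inversion for a polynomial pair of distinct degrees.

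First, choose $Q := \delta^{-K}$ for a sufficiently large constant $K$ to be specified later; note that the hypothesis $\delta \geq \exp(-\Log^{1/C_0} N)$ places $Q$ well within the regime $Q \leq \exp(\Log^{1/20} N)$ required for Lemma~\ref{cramer-stable} and Proposition~\ref{comparison}. Combining those two results and the triangle inequality yields the Gowers uniformity bound $\|\Lambda_N - \Lambda_{\HB,Q}\|_{u^{d+1}(I)} \lesssim Q^{-c}$ for any interval $I$ of length $N$. After normalizing the weights by $\langle \Log N\rangle^{O(1)}$ (using Lemma~\ref{cramer-uniform} and \eqref{lambdaq-bound}), we apply the uniformity estimate~\eqref{unif-est} from \cite{joni} to deduce
\[ |\langle \tilde \Avg_{N,\Lambda_N - \Lambda_{\HB,Q}}(f,g), h\rangle| \lesssim N^d \langle \Log N\rangle^{O(1)} Q^{-c/K'}, \]
which, for $K$ chosen large enough, is bounded by $\delta N^d/2$. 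Hence we inherit the hypothesis $|\langle \tilde \Avg_{N,\Lambda_{\HB,Q}}(f,g), h\rangle| \geq \delta N^d/2$.

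Next, using \eqref{lhb} with $|\lambda_q| \lesssim \langle \Log Q\rangle \lesssim \delta^{-o(1)}$, we expand
\[ \langle \tilde \Avg_{N,\Lambda_{\HB,Q}}(f,g), h\rangle = \frac{1}{N}\sum_{q < Q} \lambda_q \sum_{x \in \Z} h(x) \sum_{\substack{n \in (N/2,N]\\ q\mid n}} f(x+n) g(x+P(n)), \]
and pigeonhole over $q$ to extract some $q_0 < Q$ for which the inner double sum has magnitude $\gtrsim \delta^{O(1)} N^{d+1}$. Substituting $n = q_0 m$ converts this into a bilinear average on the polynomial pair $(q_0 m, P(q_0 m))$ at scale $N' := N/q_0$, of linear and polynomial degrees $1$ and $d \geq 2$ respectively:
\[ \Bigl|\sum_x h(x) \sum_{m \in (N'/2, N']} f(x + q_0 m) g(x + P(q_0 m))\Bigr| \gtrsim \delta^{O(1)} N^{d+1}. \]
We then invoke a version of Proposition~\ref{lip} adapted to the polynomial pair $(q_0 m, P(q_0 m))$ at scale $N'$ — a routine consequence of the Peluse--Prendiville theory since the two polynomials have distinct degrees and bounded coefficients (controlled by $q_0^d \leq Q^d = \delta^{-O(1)}$) — to obtain a function $F$ with $\|F\|_{\ell^\infty(\Z)} \lesssim 1$, $\|F\|_{\ell^1(\Z)} \lesssim N^d$, Fourier support in the $O(\delta^{-O(1)}/N)$-neighborhood of some $a/b \pmod 1 \in \Q/\Z$ with $b = O(\delta^{-O(1)})$, and $|\langle f, F\rangle| \gtrsim \delta^{O(1)} N^d$. (If needed, one splits $f$ into pieces supported on intervals of length $(N')^d$, applies the rescaled theorem on each, and reassembles; the total cost is another factor of $\delta^{-O(1)}$ which is absorbed into the powers of $\delta$.)

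The principal technical obstacle lies in Step~3: one must verify that the inverse theorem produced by Peluse--Prendiville for the rescaled pair $(q_0 m, P(q_0 m))$ at scale $N'$ can be repackaged to yield the structural conclusions of Proposition~\ref{lip} at the original scale $N$. In particular, the Fourier support transforms as $\delta^{-O(1)}/N' = q_0 \delta^{-O(1)}/N$, which is still of the required form $O(\delta^{-O(1)}/N)$ since $q_0 \leq \delta^{-O(1)}$, and the $\ell^\infty$ and $\ell^1$ bounds on $F$ survive the rescaling because the polynomial scale $(N')^d \cdot q_0^d = N^d$ is preserved; the arithmetic denominator $b$ may acquire a factor of $q_0$ from the dilation, but this too stays $\delta^{-O(1)}$.
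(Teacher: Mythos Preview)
Your first step — swapping $\Lambda_N$ for $\Lambda_{\HB,Q}$ via the uniformity estimate \eqref{unif-est} — has a genuine gap. The estimate \eqref{unif-est} requires the weight $\theta$ to be $1$-bounded (or, after rescaling, uniformly bounded), and you claim to ``normalize the weights by $\langle \Log N\rangle^{O(1)}$'' using \eqref{lambdaq-bound}. But \eqref{lambdaq-bound} only gives $|\Lambda_{\HB,Q}(n)| \lesssim \tau(n,Q)\langle \Log Q\rangle$, and the truncated divisor function $\tau(n,Q)$ is \emph{not} bounded by any fixed power of $\Log N$: for $n \leq N$ it can be as large as $\exp\bigl(c\log N/\log\log N\bigr)$. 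So the normalization you propose does not exist, and the appeal to \eqref{unif-est} is not justified as written. The paper confronts exactly this issue and resolves it by first passing dyadically from $\Lambda_N$ to $\Lambda_{\Cramer,w}$ for small $w$ (where Lemma~\ref{cramer-uniform} does give a polylog bound), and only then comparing to $\Lambda_{\HB,w}$ — and even there it must introduce a Chebyshev-truncated variant $\Lambda'_{\HB,w}$ (satisfying \eqref{Lambda'2}) to get a legitimate $L^\infty$ bound before invoking \eqref{unif-est}. You have omitted this truncation step entirely.

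Once at the Heath-Brown approximant, your route also diverges from the paper's in a way that creates extra work. You expand $\Lambda_{\HB,Q}$ via its Type~I representation \eqref{lhb}, pigeonhole a divisor $q_0$, rescale $n = q_0 m$, and then invoke ``a version of Proposition~\ref{lip} adapted to the polynomial pair $(q_0 m, P(q_0 m))$.'' The trouble is that Proposition~\ref{lip} is stated for the \emph{fixed} polynomial $P$, and its implied constants depend on $P$; applying it to $P(q_0\,\cdot)$ with $q_0 \leq \delta^{-O(1)}$ requires you to track that the Peluse--Prendiville constants are polynomial in the coefficients, which is plausible but is not something you can cite from the paper as a black box. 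The paper avoids this altogether by expanding $\Lambda_{\HB,w}$ via the Ramanujan-sum definition \eqref{hb-def}, \eqref{cq-def} instead: pigeonholing then yields a single character $e(-rn/q)$, and the elementary identity
\[
\langle \tilde \Avg_{N,e(-r\cdot/q)}(f,g), h \rangle \;=\; \langle \tilde \Avg_{N,1}\bigl(e(-r\cdot/q)f,\,g\bigr),\, e(-r\cdot/q)h \rangle
\]
absorbs the phase into $f$ and $h$, after which Proposition~\ref{lip} applies \emph{verbatim} with the original polynomial $P$ and no rescaling. This is both simpler and avoids any uniformity questions in the Peluse--Prendiville constants.
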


\begin{proof} We may assume that $N$ is sufficiently large depending on the fixed polynomial $P$, as the claim is easy to establish otherwise.

For any $1 \leq z \leq w \leq \exp(\Log^{1/C_0} N)$, we have from
Lemma \ref{cramer-stable}, Lemma \ref{cramer-uniform}, and \cite[Theorem 4.1]{joni} (i.e., \eqref{unif-est}) that
$$|\langle \tilde \Avg_{N,\Lambda_{\Cramer,w} - \Lambda_{\Cramer,z}}(f,g), h \rangle| \lesssim z^{-c} \langle \Log w\rangle N^d.$$
In particular, we have
\begin{align}\label{Cramer-wz}
|\langle \tilde \Avg_{N,\Lambda_{\Cramer,w} - \Lambda_{\Cramer,z}}(f,g), h \rangle| \lesssim z^{-c} N^d.
\end{align}
for $z \in [w/2,w]$; summing dyadically using the triangle inequality, we conclude that
$$|\langle \tilde \Avg_{N,\Lambda_N - \Lambda_{\Cramer,w}}(f,g), h \rangle| \lesssim w^{-c} N^d$$
for any $1 \leq w \leq \exp(\Log^{1/C_0} N)$.  

The weight $\Lambda_{\Cramer,w}$ is not quite of Type I form, so we now aim to swap it with the Heath-Brown weight $\Lambda_{\HB,w}$.  From Lemma \ref{comparison} we have
\begin{align}\label{eq:CramerHB} 
\| \Lambda_{\Cramer,w} - \Lambda_{\HB,w} \|_{u^{d+1}[N]} \lesssim w^{-c}.
\end{align}
We would like to apply \cite[Theorem 4.1]{joni} again, but we have the technical issue that $\Lambda_{\HB,w}$ does not quite have a good uniform bound, but is instead only controlled in $\ell^k$ norm for arbitrarily large but finite $k$.  However, from Lemma \ref{bounds} (applied with sufficiently large $k$) and Chebyshev's inequality, for any small $\kappa > 0$ and $\eps>0$ we can find an approximation $\Lambda'_{\HB,w}$ to $\Lambda_{\HB,w}$ with
\begin{align}\label{Lambda'2} \| \Lambda_{\HB,w} - \Lambda'_{\HB,w}\|_{\ell^1[N]} \leq \kappa\,\,\textnormal{  and  }\,\, \Lambda'_{\HB,w}(n)=O_\eps( \kappa^{-\eps} \langle \Log w\rangle^{O_\eps(1)}).
\end{align}
We can use the $\ell^1$ norm to control the $u^{d+1}$ norm, hence by~\eqref{eq:CramerHB} and the triangle inequality
\begin{align}\label{Lambda'}
\| \Lambda_{\Cramer,w} - \Lambda'_{\HB,w} \|_{u^{d+1}[N]} \lesssim \kappa + w^{-c}.
\end{align}
Now we can apply \cite[Theorem 4.1]{joni} (and Lemma \ref{cramer-uniform}) to conclude that
$$|\langle \tilde \Avg_{N,\Lambda_{\Cramer,w} - \Lambda'_{\HB,w}}(f,g), h \rangle| \lesssim_\eps  \langle \Log w\rangle^{O_{\varepsilon}(1)}(\kappa^c + \kappa^{-\eps} w^{-c}) N^d.$$
Finally, from the triangle inequality and Cauchy--Schwarz, we can crudely bound
$$|\langle \tilde \Avg_{N,\Lambda'_{\HB,w} - \Lambda_{\HB,w}}(f,g), h \rangle| \lesssim \kappa N^d.$$
Putting this all together, choosing $\eps$ to be sufficiently small, and $\kappa$ to be a small multiple of $w^{-c}$ for a suitable $c$, we conclude that
$$|\langle \tilde \Avg_{N,\Lambda_N - \Lambda_{\HB,w}}(f,g), h \rangle| \lesssim w^{-c} N^d$$
for any $1 \leq w \leq \exp(\Log^{1/C_0} N)$.  In particular, from \eqref{hyp-wt} we now have
$$|\langle \tilde \Avg_{N,\Lambda_{\HB,w}}(f,g), h \rangle| \gtrsim \delta N^d$$
for some $1 \leq w \lesssim \delta^{-O(1)}$.  Expanding \eqref{hb-def} and using the triangle inequality and crude bounds, we conclude that
$$|\langle \tilde \Avg_{N,e(-r\cdot/q)}(f,g), h \rangle| \gtrsim \delta^{O(1)} N^d$$
for some $1 \leq r \leq q \lesssim \delta^{-O(1)}$.  But observe the identity
$$ \langle \tilde \Avg_{N,e(-r\cdot/q)}(f,g), h \rangle = \langle \tilde \Avg_{N,1}(e(-r\cdot/q)f,g), e(-r\cdot/q) h \rangle.$$
We can thus apply Proposition \ref{lip} to conclude that
$$ |\langle e(-r\cdot/q) f, F \rangle| \gtrsim \delta^{O(1)} N^d$$
for some function $F$ obeying the conclusions of that proposition.  Transferring the plane wave $e(-r\cdot/ q)$ from $f$ to $F$, we obtain the claim (noting that the denominator $b$ will remain acceptably under control since $q \lesssim \delta^{-O(1)}$).
\end{proof}

If we now repeat the arguments of \cite[\S 6.1]{KMT0}, using Proposition \ref{lip-wt} and Lemma \ref{lp-improv} in place of \cite[Proposition 6.6]{KMT0} and \cite[Proposition 6.21]{KMT0} respectively, inserting the weights $\Lambda_N$ in the averaging operators in the obvious fashion, we obtain case (i) of Theorem \ref{ssmae}.  To handle case (ii), we need the following variant of Proposition \ref{lip-wt}.

\begin{proposition}[Weighted inverse theorem for $g$]\label{lip-wt-g}  Under the hypotheses of Proposition~\ref{lip-wt}, there exists a function $G \in \ell^2(\Z)$ with
    \begin{equation}\label{ooh}
    \|G\|_{\ell^\infty(\Z)} \lesssim 1; \quad \|G\|_{\ell^1(\Z)} \lesssim N^d
    \end{equation}
    and with $\F_{\Z}G$  supported in the $O(\delta^{-O(1)}/N^d)$-neighborhood of some rational $a/b \mod 1 \in \Q/\Z$ with $b=O(\delta^{-O(1)})$ such that
    \begin{equation}\label{gG}
    |\langle g, G \rangle| \gtrsim \delta^{O(1)} N^d.
    \end{equation}
\end{proposition}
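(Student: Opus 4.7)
\textbf{Proof proposal for Proposition \ref{lip-wt-g}.}  The plan is to run the proof of Proposition \ref{lip-wt} essentially verbatim up to the point where the weighted estimate has been reduced to an unweighted bilinear correlation, and then at the final step apply the ``$g$-version'' of the unweighted Peluse--Prendiville inverse theorem (i.e., the analogue of Proposition \ref{lip} in which the structured approximant is produced for the second argument rather than the first) in place of Proposition \ref{lip} itself.  Such an inverse theorem is available in \cite{KMT0} as a companion to \cite[Proposition 6.6]{KMT0}, and gives a function $G$ with the $\ell^\infty$/$\ell^1$ bounds \eqref{ooh} whose Fourier transform is localized in an $O(\delta^{-O(1)}/N^d)$-neighbourhood of a rational with denominator $O(\delta^{-O(1)})$ — note that the relevant frequency scale is now $1/N^d$ rather than $1/N$, reflecting the fact that $g$ is evaluated at $T^{P(n)}$ with $P(n)$ of size $\sim N^d$.

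Concretely, I would carry out the identical chain of approximations as in the proof of Proposition \ref{lip-wt}: first replace $\Lambda_N$ by $\Lambda_{\Cramer,w}$ for some $w \in [1, \exp(\Log^{1/C_0} N)]$ by dyadically summing Lemma \ref{cramer-stable} together with \cite[Theorem 4.1]{joni}; then swap to the Heath-Brown approximant $\Lambda_{\HB,w}$ using Proposition \ref{comparison}, passing through a truncation $\Lambda'_{\HB,w}$ (as in \eqref{Lambda'2}, \eqref{Lambda'}) to deal with the lack of a uniform bound; and finally expand $\Lambda_{\HB,w}$ via \eqref{hb-def} and pigeonhole in the Ramanujan sum to obtain, for some $1 \leq r \leq q \lesssim \delta^{-O(1)}$, the unweighted lower bound
\[ |\langle \tilde \Avg_{N, e(-r \cdot / q)}(f,g), h \rangle| \gtrsim \delta^{O(1)} N^d. \]

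At this point the crucial observation is that the identity
\[ \langle \tilde \Avg_{N, e(-r \cdot / q)}(f,g), h \rangle = \langle \tilde \Avg_{N, 1}(e(-r \cdot / q) f, g), e(-r \cdot / q) h \rangle, \]
used also in the proof of Proposition \ref{lip-wt}, leaves the function $g$ \emph{untouched}.  Consequently, when the $g$-version of the unweighted inverse theorem is applied to the triple $(e(-r \cdot / q) f, g, e(-r \cdot / q) h)$, the resulting function $G$ automatically satisfies $|\langle g, G \rangle| \gtrsim \delta^{O(1)} N^d$ without any need to transfer a plane wave onto $G$ afterwards.  This immediately gives \eqref{gG} and the desired Fourier localization of $G$.

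There is essentially no new obstacle here beyond what is already overcome in Proposition \ref{lip-wt}; the only point worth checking is that \cite{KMT0} does indeed supply the $g$-version of Proposition \ref{lip} as a black box (with the Fourier support localized at the natural scale $1/N^d$), but this follows from the same Peluse--Prendiville machinery, in particular the dual degree-lowering argument that extracts structure from the second function.  The uniform $L^p$-improving input needed to upgrade the resulting $\ell^\infty$ correlation into the bounds \eqref{ooh} is provided by Lemma \ref{lp-improv} exactly as in the proof of Proposition \ref{lip-wt}.
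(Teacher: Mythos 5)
Your proposal is correct and follows essentially the same route as the paper, which proves Proposition \ref{lip-wt-g} by repeating the argument of Proposition \ref{lip-wt} verbatim and substituting the $g$-version of the unweighted inverse theorem ([KMT0, Proposition 6.26]) for Proposition \ref{lip} at the final step. Your additional observation that the plane wave $e(-r\cdot/q)$ lands only on $f$ and $h$, so that no transfer onto $G$ is needed in the $g$-case, is a correct and slightly tidier detail that the paper leaves implicit.
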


But this can be derived from \cite[Proposition 6.26]{KMT0} in precisely the same way Proposition~\ref{lip-wt} was derived from \cite[Proposition 6.6]{KMT0}. By repeating the remaining arguments of \cite[\S 6.2]{KMT0}, one obtains case (ii) of Theorem \ref{ssmae}.

\section{Remarks}\label{Remarks-sec}

\subsection{Peluse's inverse theorem for the primes}

As is clear from the previous sections, Peluse's inverse theorem~\cite{Peluse} was an important ingredient in the proof of the unweighted bilinear ergodic theorem in~\cite{KMT0}. In the course of proving Theorem~\ref{main-thm}, we essentially needed a version of this inverse theorem where one of the variables was weighted by the approximant $\Lambda_N$; see Proposition~\ref{lip-wt}. It is natural to ask if one can also obtain a version of Peluse's inverse theorem with the von Mangoldt weight $\Lambda$.  We record here how such a result quickly follows from the arguments used to prove Proposition~\ref{lip-wt}.

\begin{theorem}[Peluse's inverse theorem with prime weight]\label{peluse-thm}
Let $k,d\in \mathbb{N}$ and $A>0$. Let $N\geq 2$, $(\log N)^{-A}\leq \delta\leq 1$ and $N_0\sim N^d$. Let $P_1,\ldots, P_k$ be polynomials with integer coefficients of distinct degrees, with maximal degree $d$. Let $h,f_1,\ldots, f_k\colon \Z\to \mathbb{C}$ be functions bounded in modulus by $1$ and supported on $[-N_0,N_0]$. Suppose that
\begin{align}\label{inverse}
\left|\sum_{x\in \Z}\E_{n\in [N]}\Lambda(n)h(x)f_1(x+P_1(n))\cdots f_k(x+P_k(n))\right| \geq \delta N^d.   
\end{align}
Then either $N_0\lesssim_{P_1,\ldots, P_k} \delta^{-O_d(1)}$ or there exists a positive integer $q\lesssim_{P_1,\ldots, P_k} \delta^{-O_d(1)}$ and  $\delta^{O_d(1)}N\lesssim_{P_1,\ldots, P_k} N'\leq N$ such that
\begin{align*}
\frac{1}{N^d}\left|\sum_{x\in \Z}\E_{m\in [N']}f_1(x+qm)\right|\gtrsim_{A,P_1,\ldots, P_k}\delta^{O_d(1)}.   
\end{align*}
\end{theorem}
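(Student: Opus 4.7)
The plan is to follow the three-step weight swap employed in the proof of Proposition~\ref{lip-wt}, reducing~\eqref{inverse} to the unweighted multilinear inverse theorem of Peluse~\cite{Peluse}. First, using~\eqref{lambda-approx} together with the multilinear extension of \cite[Theorem 4.1]{joni} (the PET--van der Corput argument there applies equally well to $k$-linear averages with polynomials of distinct degrees and maximal degree $d$, producing control by the $u^{d+1}$ norm of the weight), the weight $\Lambda$ in~\eqref{inverse} can be replaced by the Cram\'er approximant $\Lambda_N$; choosing $A$ large enough, the resulting error $\lesssim_{A} \langle \Log N\rangle^{-A}$ is absorbed into $\delta^{O_d(1)}$. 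A dyadic telescoping via Lemma~\ref{cramer-stable} then descends from $\Lambda_N$ to $\Lambda_{\Cramer, w}$ for some $w \lesssim \delta^{-O_d(1)}$, and Lemma~\ref{comparison} together with the Chebyshev truncation device afforded by Lemma~\ref{bounds} lets me swap $\Lambda_{\Cramer, w}$ for the Heath-Brown approximant $\Lambda_{\HB, w}$, exactly as in the proof of Proposition~\ref{lip-wt}.

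With $\Lambda_{\HB, w}$ as the weight, I would expand via~\eqref{hb-def}--\eqref{cq-def} and use the triangle inequality and pigeonhole to extract a single pure exponential phase: there exist $1 \leq r \leq q' \lesssim \delta^{-O_d(1)}$ with
\[
\left|\sum_{x \in \Z}\E_{n \in [N]} e(-rn/q') h(x) \prod_{i=1}^k f_i(x + P_i(n))\right| \gtrsim \delta^{O_d(1)} N^d.
\]
A further pigeonhole selects a residue class $a \bmod q'$ on which $e(-rn/q')$ is a fixed unit constant; after the change of variables $n = a + q' m$ and setting $M \coloneqq \lfloor (N-a)/q' \rfloor \sim N/q'$, this becomes
\[
\left|\sum_{x \in \Z} \E_{m \in [M]} h(x) \prod_{i=1}^k f_i(x + Q_i(m))\right| \gtrsim \delta^{O_d(1)} M^d,
\]
where $Q_i(m) \coloneqq P_i(a + q' m)$ are polynomials with integer coefficients of the same distinct degrees as $P_1,\ldots,P_k$ and maximal degree $d$.

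To finish, I would apply Peluse's unweighted inverse theorem~\cite{Peluse} to the preceding display to produce a positive integer $q'' \lesssim_{P_1,\ldots,P_k} \delta^{-O_d(1)}$ and some $\delta^{O_d(1)} M \lesssim N' \leq M$ such that
\[
\frac{1}{M^d}\left|\sum_{x \in \Z} \E_{m \in [N']} f_1(x + q'' m)\right| \gtrsim_{P_1,\ldots,P_k} \delta^{O_d(1)};
\]
since $M \gtrsim \delta^{O_d(1)} N$ and hence $M^d \gtrsim \delta^{O_d(1)} N^d$, setting $q \coloneqq q''$ yields the stated conclusion, unless at some intermediate step a pigeonhole or approximation error has already forced $N_0 \lesssim_{P_1,\ldots,P_k} \delta^{-O_d(1)}$, which accounts for the alternative in the statement. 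The principal technical point I would have to verify is the multilinear analogue of \cite[Theorem 4.1]{joni}, which is stated in the excerpt only for the $k=2$ case; however, its proof is a standard PET induction essentially identical to the $k=2$ case in~\cite{joni}, exploiting the distinct-degree hypothesis to reduce to a single-scale estimate controlled by $\|\theta\|_{u^{d+1}[N]}^{1/K}$ after finitely many van der Corput steps. Everything else in the plan is a direct transcription of the argument proving Proposition~\ref{lip-wt}.
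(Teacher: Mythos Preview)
Your proposal is correct and follows essentially the same route as the paper's proof: swap $\Lambda \to \Lambda_N \to \Lambda_{\Cramer,w} \to \Lambda_{\HB,w}$ using \cite[Theorem 4.1]{joni} together with \eqref{lambda-approx}, Lemma~\ref{cramer-stable}, and Lemma~\ref{comparison}, then expand the Heath--Brown weight and apply Peluse's unweighted theorem after a change of variables. The only cosmetic difference is that the paper decomposes $\Lambda_{\HB,w}$ directly as a $1$-bounded combination of $O(w^3)$ indicators of arithmetic progressions (skipping your intermediate ``extract an exponential phase, then pigeonhole on residues'' step), and the paper cites \cite[Theorem 4.1]{joni} in the multilinear form without further comment rather than flagging it as a point to verify.
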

\begin{proof}
Fix $P_1,\ldots, P_k$; we allow all implied constants to depend on them. Define the polynomial averaging operator 
\begin{align*}
T_{N,\theta}(h,f_1,\ldots, f_k)\coloneqq \sum_{x\in \Z}\E_{n\in [N]}\theta(n)h(x)f_1(x+P_1(n))\cdots f_k(x+P_k(n)).    
\end{align*}
Let $w_0=\delta^{-C_d}$ for a large enough constant $C_d$. 
We claim that 
\begin{align}\label{T1}
T_{N,\Lambda-\Lambda_N}(h,f_1,\ldots, f_k)\lesssim_A (\log N)^{-A} 
\end{align}
and 
\begin{align}\label{T2}
T_{N,\Lambda_N-\Lambda_{\Cramer,w_0}}(h,f_1,\ldots, f_k)\lesssim \delta^2
\end{align}
and 
\begin{align}\label{T3}
T_{N,\Lambda_{\Cramer,w_0}-\Lambda_{\HB,w_0}}(h,f_1,\ldots, f_k)\lesssim \delta^2.     
\end{align}
After we have these three estimates, we conclude from~\eqref{inverse} and linearity that
\begin{align*}
|T_{N,\Lambda_{\HB,w_0}}(h,f_1,\ldots, f_k)|\gtrsim \delta.     
\end{align*}
By~\eqref{hb-def} and~\eqref{cq-def}, the function $\Lambda_{\HB,w_0}$ is a linear combination, with $1$-bounded coefficients, of $O(w_0^3)$ indicators of arithmetic progressions of common difference at most $w_0$. Hence, crudely using the triangle inequality, we obtain
\begin{align*}
|T_{N,\ind{a\ (q')}}(h,f_1,\ldots, f_k)|\gtrsim \delta^{O_d(1)}     
\end{align*}
for some $1\leq a\leq q'\lesssim \delta^{-O_d(1)}$. 
But now the claim of the theorem follows from~\cite[Theorem 3.3]{Peluse} after making a change of variables. 

We are left with showing~\eqref{T1},~\eqref{T2},~\eqref{T3}. The estimate~\eqref{T1} follows immediately from~\cite[Theorem 4.1]{joni} and~\eqref{lambda-approx}. The estimate~\eqref{T2} follows by using Lemma~\ref{cramer-stable}, Lemma~\ref{cramer-uniform} and~\cite[Theorem 4.1]{joni} to obtain
\begin{align*}
T_{N,\Lambda_{\Cramer, w}-\Lambda_{\Cramer,z}}(h,f_1,\ldots, f_k)\lesssim w^{-c_d}
\end{align*}
for some $c_d>0$ and any $z\in [w/2,w]$, $1\leq w\leq \exp((\log N)^{1/10})$, and then summing this dyadically. For proving~\eqref{T3}, note that from~\eqref{Lambda'} and~\cite[Theorem 4.1]{joni}, we have for any  $\kappa>0, \varepsilon>0$ the bound 
\begin{align*}
T_{N,\Lambda_{\Cramer, w_0}-\Lambda'_{\HB,w_0}}(h,f_1,\ldots, f_k)  \lesssim_\eps  \langle \Log w_0\rangle^{O_{\varepsilon}(1)}(\kappa^{c_d'} + \kappa^{-\eps} w_0^{-c_d'}) N^d, 
\end{align*}
with $\Lambda'_{\HB,w_0}$ obeying~\eqref{Lambda'2}. But from~\eqref{Lambda'2} and the triangle inequality we now obtain~\eqref{T3} by taking $\varepsilon>0$ small enough and $\kappa=w_0^{-c}$ for a small enough constant $c$ (depending on $d$). This was enough to complete the proof.
\end{proof}

\subsection{Siegel zeroes}

In this subsection, we mention an alternative approach to Theorem~\ref{main-thm} based on working with Siegel zeroes. This approach is somewhat more complicated than the one implemented above, and we shall only sketch it very briefly, leaving the details to the interested reader. 

The place in the proof of Theorem~\ref{main-thm} where passing from the von Mangoldt function $\Lambda$ to the approximant $\Lambda_N$ avoided dealing with Siegel zeroes is Proposition~\ref{mod-approx}, so we begin by sketching how a variant of Proposition~\ref{mod-approx} can be proven for the weight $\Lambda$.   

We say that a modulus $q\geq 2$ is exceptional if there exists a non-principal real Dirichlet character $\chi_q\pmod q$ such that $L(s,\chi_q)$ has a real zero $\beta_q>1-c_0/(\log q)$, where $c_0$ is some small absolute constant. We call the corresponding character $\chi_q$ an exceptional character, and we call $\beta_q$ a Siegel zero. For any given exceptional $q$, the character $\chi_q$ and Siegel zero $\beta_q$ are uniquely determined.

For exceptional characters $\chi_q$, we define the arithmetic symbol 
\[ m_{\hat{\mathbb{Z}}^{\times},\chi_q}\big( \frac{a_1}{q} \mod 1, \frac{a_2}{q} \mod 1) \coloneqq \
\frac{1}{\phi(q)} \sum_{r\in (\mathbb{Z}/q\mathbb{Z})^{\times}} e\big(\frac{a_1r}{q}  + \frac{a_2 P(r)}{q}\big) \chi_q(r)
,\]
and the (weighted) continuous multiplier
\[ \tilde{m}_{N,\mathbb{R},\chi_q}(\xi_1,\xi_2) \coloneqq \int_{1/2}^{1} e( \xi_1 t + \xi_2 P(t)) t^{\beta_q - 1} \ dt, \]
where $\beta_q\in (0,1)$ is the Siegel zero.  Then if we replace in~\eqref{norma}
\begin{align*}
&\B^{l_1, l_2, m_{\hat \Z^\times}}_{(\eta_{\leq -\Log N+s} \otimes \eta_{\leq -d\Log N+ds})\tilde m_{N,\R} } 
\longrightarrow \\
& \qquad 
\B^{l_1, l_2, m_{\hat \Z^\times}}_{(\eta_{\leq -\Log N+s} \otimes \eta_{\leq -d\Log N+ds})\tilde m_{N,\R} } + \sum_{q \text{ exceptional}} \B^{l_1, l_2, m_{\hat \Z^\times,\chi_q}}_{(\eta_{\leq -\Log N+s} \otimes \eta_{\leq -d\Log N+ds})\tilde m_{N,\R,\chi_q} }, 
    \end{align*}
the conclusion of Proposition \ref{mod-approx} holds with the von Mangdolt weight $\Lambda$ in place of $\Lambda_N$. This follows from essentially the same proof as in Section~\ref{verifying-sec}, but using the Landau--Page theorem (\cite[Corollary 11.10]{mv}) in place of Corollary~\ref{mean}.

In the large-scale regime, the error bounds arising from the Siegel--Walfisz theorem remove the need for the above approximation; in the small-scale regime
\[\{ N \in \mathbb{D}\colon 2^{u^{O(1/(C_0 \rho))}} \leq N \leq 3^{C_0 \cdot 2^u} \}\]
further analysis is required to reduce matters to the two-parameter Rademacher--Menshov inequality.

The first observation is the classical fact that there is at most one exceptional character at each dyadic scale:
\begin{align}\label{e:excj} |\{q\in (2^j,2^{j+1}]\colon q \text{ exceptional} \} \leq 1.
\end{align}
We let $q_j$ denote the unique exceptional modulus in $(2^j,2^{j+1}]$, and abbreviate $\beta_j = \beta_{q_j}$.

 We then introduce a dyadic decomposition 
 \[ \sum_{q \text{ exceptional}} \B^{l_1, l_2, m_{\hat \Z^\times,\chi_q}}_{(\eta_{\leq -\Log N+s} \otimes \eta_{\leq -d\Log N+ds})\tilde m_{N,\R,\chi_q}} = \sum_{j \leq 2^{\rho l}} C_{N,j}(f,g),\]
where
\begin{align*}
    &C_{N,j}(f,g)(x) \\
& = \int_{1/2}^1 \Big( \int_{\mathbb{T}^2}  \sum_{(a_1/q_j,a_2/q_j)\colon \Height(a_i/q_j) = 2^{l_i}}
m_{\hat{\mathbb{Z}}^{\times},\chi_{q_j}}(a_1/q_j,a_2/q_j) e(a_1 x/q_j + a_2 x/q_j)  \\
& \qquad\times 
\big( \hat{f}(\xi_1 + a_1/q_j) \cdot \varphi(2^u \xi_1) \cdot e(\xi_1 N t) \big) \\
& \qquad \qquad\times \big( \hat{g}(\xi_2 + a_2/q_j) \cdot \varphi(2^{du} \xi_2) \cdot e( \xi_2 P(Nt)) \big) e(\xi_1 x + \xi_2 x) \cdot N^{\beta_{j} - 1} t^{\beta_{j} - 1} \ d\xi_1 d\xi_2 \Big)   \ dt.
\end{align*}

The key novelty then derives from proving the following modified Rademacher--Menshov-type inequality, similar to \cite[Lemma 8.2]{KMT0}.

\begin{lemma}\label{l:varbound}
    Let $V,W$ be normed vector spaces, $K,J$ be two positive integers, and  let $0<q<\infty$. Let  $B_j\colon V\times W\rightarrow L^q(X)$ be a family of bilinear operators for $j\in [J]$. Let  $\{f_{k}^j\}, \{g_k^j\}$ be sets of functions with $f_k^j\in V$ and $g_k^j\in W$ for $k\in [K]$ and $j\in [J]$. Then
\begin{align*}&\Big\|V^2\big(\sum_{j\in [J]}B_j(f_k^j,g_k^j)\colon k \in [K]\big)\Big\|_{L^q(X)} \\
    &\qquad\lesssim_{q} \langle \Log K\rangle^{O_q(1)} \sup_{\epsilon_{k}^{j}, {\varepsilon}_k^j \in \{ \pm 1 \}} \Big\| \sum_{j\in [J]}B_j\big(\sum_{k\in [K]} \epsilon_k^j  (f_k^j- f_{k-1}^j), \sum_{k\in [K]} {\varepsilon}_k^j  (g_k^j- g_{k-1}^j)\big)\Big\|_{L^q(X)}.
\end{align*}
\end{lemma}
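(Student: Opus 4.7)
The plan is to iterate the classical one-parameter Rademacher--Menshov inequality, exploiting the bilinearity of each $B_j$ to convert the successive iterations into averaging over independent families of random signs. Write $a_k := \sum_{j\in [J]} B_j(f_k^j,g_k^j)$. The standard dyadic Rademacher--Menshov argument gives the pointwise bound
\[
V^2(a_k : k \in [K]) \leq \sum_{n=0}^{\lceil \log_2 K\rceil} \Big(\sum_{|I|=2^n} |a_{r(I)} - a_{l(I)}|^2\Big)^{1/2},
\]
where $I$ ranges over dyadic subintervals of $[K]$ of length $2^n$ with endpoints $l(I),r(I)$. Taking $L^q$-norms and applying Khintchine's inequality (valid for all $0<q<\infty$ via Khintchine--Kahane) replaces each square function by a randomized sum, yielding
\[
\|V^2(a_k)\|_{L^q} \lesssim_q \langle \Log K\rangle \sup_{\epsilon_k^{(1)} \in \{\pm 1\}} \Big\| \sum_{k \in [K]} \epsilon_k^{(1)} \delta a_k \Big\|_{L^q},
\]
with $\delta a_k := a_k - a_{k-1}$.

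Next I would unfold the increments using bilinearity. Writing $\delta a_k = \sum_j [B_j(\delta f_k^j, g_k^j) + B_j(f_{k-1}^j, \delta g_k^j)]$ and expanding $g_k^j=\sum_{k'\leq k}\delta g_{k'}^j$ and $f_{k-1}^j=\sum_{k'<k}\delta f_{k'}^j$, one arrives after relabeling at the identity
\[
\sum_{k} \epsilon_k^{(1)} \delta a_k = \sum_j \sum_{k,k'\in [K]} \epsilon_{\max(k,k')}^{(1)} B_j(\delta f_k^j, \delta g_{k'}^j).
\]
To produce the product-type randomization demanded by the right-hand side of the lemma, split this triple sum according to whether $k<k'$, $k>k'$, or $k=k'$; each piece can be written as a one-parameter sum of the form $\sum_{k''} B_j(U_{k''-1}^j, \delta g_{k''}^j)$ (or its transpose), where $U_{k''-1}^j := \sum_{k<k''} \epsilon_k^{(1)} \delta f_k^j$ is a partial sum of randomized $f$-increments. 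A second application of the dyadic Rademacher--Menshov principle, followed once more by Khintchine, on the variable $k''$ introduces an independent sign sequence $\varepsilon_{k''}^{(2)}$ at the cost of another factor of $\langle \Log K\rangle$.

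Absorbing the two successive suprema over random signs into a single supremum over a pair of independent sign sequences $(\epsilon_k^j, \varepsilon_{k'}^j)$ indexed additionally by $j$ (allowed because enlarging the supremum can only enlarge the bound) converts the resulting expression into $\sum_j B_j\big(\sum_k \epsilon_k^j \delta f_k^j, \sum_{k'} \varepsilon_{k'}^j \delta g_{k'}^j\big)$, the form appearing on the right-hand side of the lemma. The total logarithmic loss is at most $\langle \Log K\rangle^2$, comfortably within the allowed $\langle \Log K\rangle^{O_q(1)}$.

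The main obstacle is exactly the transition between the two randomizations: the $\max$-dependent sign pattern $\epsilon_{\max(k,k')}^{(1)}$ produced by a single application of Rademacher--Menshov does not factor as a product $\epsilon_k^j \varepsilon_{k'}^j$, and no purely sign-theoretic domination is possible (one can check on the rank-one test matrix $c_{k,k'}\equiv 1$ that the $\max$-sign $L^2$-norm exceeds the product-sign $L^2$-norm by a factor of order $\sqrt{K}$). Exploiting the \emph{bilinear} structure $c_{k,k'}=B_j(\delta f_k^j,\delta g_{k'}^j)$ through the triangular splitting and the second Rademacher--Menshov application above is what unlocks the factorization; this is also why two independent sign sequences appear in the statement and why a single power of $\langle \Log K\rangle$ is not enough.
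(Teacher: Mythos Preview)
Your iterative one-parameter approach differs from the paper's, and the step where you invoke a ``second Rademacher--Menshov application'' does not close.  After the first pass you correctly arrive at
\[
\sum_j\sum_{k,k'}\epsilon^{(1)}_{\max(k,k')}\,B_j(\delta f_k^j,\delta g_{k'}^j),
\]
and you split into the pieces $k<k'$, $k>k'$, $k=k'$.  Taking, say, the $k>k'$ piece in the form $\sum_{k''}\sum_j B_j(U_{k''-1}^j,\delta g_{k''}^j)$ and applying Rademacher--Menshov + Khintchine in $k''$ yields
\[
\sup_{\varepsilon^{(2)}}\Big\|\sum_j\sum_{k''}\varepsilon^{(2)}_{k''}B_j(U_{k''-1}^j,\delta g_{k''}^j)\Big\|
=\sup_{\varepsilon^{(2)}}\Big\|\sum_j\sum_{k<k'}\epsilon^{(1)}_{k}\varepsilon^{(2)}_{k'}B_j(\delta f_k^j,\delta g_{k'}^j)\Big\|,
\]
since $U_{k''-1}^j=\sum_{k<k''}\epsilon^{(1)}_k\delta f_k^j$ still depends on $k''$.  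This is a \emph{triangular} product-sign sum, not a full one; the restriction $\ind{k<k'}$ is not of the form $\sigma_k\tau_{k'}$, so ``absorbing into a single supremum'' does not convert it into $\sum_j B_j(\sum_k\epsilon_k^j\delta f_k^j,\sum_{k'}\varepsilon_{k'}^j\delta g_{k'}^j)$.  Passing from the triangular sum to the full sum is precisely the triangular-truncation problem, and no polylogarithmic bound of that type is available in this generality.  Your own rank-one test shows that purely sign-theoretic domination fails; the bilinear structure was supposed to rescue this, but after the second randomization the bilinearity has already been spent and you are left with an honest triangular restriction.

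The paper avoids this by working two-dimensionally from the outset.  One sets $a_{k_1,k_2}:=\sum_j B_j(f_{k_1}^j,g_{k_2}^j)$ and applies the two-parameter Rademacher--Menshov inequality \cite[Lemma~8.1]{KMT0}, which bounds $V^2$ of the diagonal by $\sum_{M_1,M_2}\|\Delta a_{M_1 n_1,M_2 n_2}\|_{\ell^2(n_1,n_2)}$, where $\Delta$ is the \emph{mixed} second difference.  The point is that bilinearity makes this mixed difference factor exactly:
\[
\Delta a_{M_1 n_1,M_2 n_2}=\sum_j B_j\big(f_{M_1 n_1}^j-f_{(n_1-1)M_1}^j,\;g_{M_2 n_2}^j-g_{(n_2-1)M_2}^j\big).
\]
Two applications of Khintchine (one in $n_1$, one in $n_2$) then turn the $\ell^2(n_1,n_2)$ into a supremum over two independent sign sequences, which can be pushed inside each $B_j$ by bilinearity; telescoping the coarse increments back to unit increments gives the stated right-hand side.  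The moral is that the factorisation you are trying to manufacture \emph{after} a first one-parameter pass comes for free if you take mixed second differences \emph{before} any randomisation.
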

This result may be of independent interest, so we provide a brief proof.
\begin{proof}
    Set $a_{k_1,k_2}= \sum_{j \in [J]} B_j(f_{k_1}^j,g_{k_2}^j)$. By \cite[Lemma 8.1]{KMT0}, we have
    \begin{align*}
        V^2\big(\sum_{j \in [J]} B_j(f_k^j,g_k^j)\colon k\in [K]\big) \lesssim \sum_{\substack{M_1,M_2<K\\M_1,M_2\colon \textup{dyadic}}} \parallel \Delta \sum_{j \leq J} B_j(f_{M_1n_1}^j,g_{M_2n_2}^j)\parallel_{\ell^2(n_1,n_2)}
    \end{align*}
    where 
    \begin{align*}
        \Delta \sum_{j \in [J]} B_j(f_{M_1n_1}^j,g_{M_2n_2}^j) =& \sum_{ j \in [J]} B_j(f_{M_1n_1}^j,g_{M_2n_2}^j) - \sum_{j\in [J]} B_j(f_{(n_1-1)M_1}^j,g_{M_2n_2}^j)\\
        &- \sum_{j \in [J]} B_j(f_{M_1n_1}^j,g_{(n_2-1)M_2}^j) + \sum_{j \in [J]} B_j(f_{(n_1-1)M_1}^j,g_{(n_2-1)M_2}^j)
    \end{align*}
    Taking 
    \[ \tilde f_{M_1n_1} = f_{M_1n_1} - f_{(n_1-1)M_1}, \; \; \;  \tilde g_{M_2n_2} = g_{M_2n_2} - g_{(n_2-1)M_2},\]
    we need to bound
    \begin{align}
        \langle \Log K\rangle^{O_q(1)} \sup_{M_1,M_2 < K \text{ dyadic}} \Big\| \Big(\sum_{\substack{n_1<k/M_1\\n_2<k/M_2}}| \sum_{j\in [J]} B_j(\tilde f_{M_1n_1}^j,\tilde g_{M_2n_2}^j)|^2\Big)^{1/2}\Big\|_{L^q(X)}
    \end{align}
    Applying Khintchine's inequality
    \[ \big( \sum_{n} |a_n|^2 \big)^{1/2} = \big( \mathbb{E}_{\epsilon_n \in \pm 1}| \sum_n \epsilon_n a_n|^2 \big)^{1/2} \sim_q \big( \mathbb{E}_{\epsilon_n \in \pm 1}| \sum_n \epsilon_n a_n|^q \big)^{1/q}, \]
    we arrive at the following chain of inequalities:
    \begin{align*}
        &\Big\| V^2\big(\sum_{j\in [J]}B_j(f_k^j,g_k^j)\colon k \in [K]\big)\Big\|_{L^q(X)} \\
        &\qquad\lesssim \langle \Log K\rangle^{O_q(1)} \sup_{M_1,M_2} \parallel \big(\mathbb{E}_{\varepsilon_{n_2} \in \pm 1} \sum_{\substack{n_1}}| \sum_{n_2}\sum_{j\in [J]} \varepsilon_{n_2} B_s(\tilde f_{M_1n_1}^j,\tilde g_{M_2n_2}^j)|^2\big)^{1/2}\parallel_{L^q(X)} \\
        & \qquad \qquad\lesssim \langle \Log K\rangle^{O_q(1)} \sup_{M_1,M_2} \parallel \big(\mathbb{E}_{\epsilon_{n_1}, \varepsilon_{n_2} \in \pm 1}| \sum_{\substack{n_1}} \sum_{n_2}\sum_{j\in [J]} \epsilon_{n_1} \varepsilon_{n_2} B_j(\tilde f_{M_1n_1}^j,\tilde g_{M_2n_2}^j)|^2\big)^{1/2}\parallel_{L^q(X)} \\
        & \qquad \qquad \qquad \lesssim_q \langle \Log K\rangle^{O_q(1)} \sup_{M_1,M_2} \parallel \big(\mathbb{E}_{\epsilon_{n_1},  \varepsilon_{n_2} \in \pm 1} |\sum_{\substack{n_1}} \sum_{n_2}\sum_{j\in [J]} \epsilon_{n_1} \varepsilon_{n_2} B_j(\tilde f_{M_1n_1}^j,\tilde g_{M_2n_2}^j)|^q\big)^{1/q}\parallel_{L^q(X)} \\
        & \qquad \qquad \qquad \qquad \lesssim_q \langle \Log K\rangle^{O_q(1)} \sup_{M_1,M_2, \epsilon_{n_1},\epsilon_{n_2}} \parallel  \sum_{\substack{n_1}} \sum_{n_2}\sum_{j\in [J]} \epsilon_{n_1} \varepsilon_{n_2} B_j(\tilde f_{M_1n_1}^j,\tilde g_{M_2n_2}^j)\parallel_{L^q(X)}.
    \end{align*}
    By bilinearity, we may consolidate
    \[ \sum_{\substack{n_1}} \sum_{n_2}\sum_{j\in [J]} \epsilon_{n_1} \varepsilon_{n_2} B_j(\tilde f_{M_1n_1}^j,\tilde g_{M_2n_2}^j) = \sum_{j\in [J]} B_j(\sum_{n_1} \epsilon_{n_1} \tilde f_{M_1 n_1}^j, \sum_{n_2} \varepsilon_{n_2}\tilde g_{M_2n_2}^j);\]
    putting everything together
    \begin{align*}
        &\parallel {V}^2\big(\sum_{j\in [J] }B_j(f_k^j,g_k^j)\colon k \in [K]\big)\parallel_{L^q(X)} \\
        &\lesssim \langle \Log K\rangle^{O_q(1)} \sup_{\substack{M_1,M_2\\ \epsilon_{n_1}, \varepsilon_{n_2}}} \Big\| \sum_{j \in [J]} B_j\left( \sum_{n_1} \epsilon_{n_1} ( f_{M_1n_1}^j- f_{(n_1-1)M_1}^j),\sum_{n_2}\varepsilon_{n_2}  (g_{M_2n_2}^j-g_{(n_2-1)M_2}^j)\right)\Big\|_{L^q(X)},
    \end{align*}
    and so we get the result upon telescoping e.g.
    \[ \epsilon_{n_1} (f_{M_1n_1}^j- f_{(n_1-1)M_1}^j) =  \sum_{(n_1 - 1)M_1 < k \leq M_1 n_1} \epsilon_{n_1}( f_k^j - f_{k_1}^j ) \eqqcolon \sum_{(n_1 - 1)M_1 < k \leq M_1 n_1} \epsilon_k^j (f_k^j - f_{k-1}^j).\]
    \end{proof}

\subsection{Breaking Duality}\label{sec:duality}
We briefly remark that one may establish Theorem \ref{main-thm} with $r$-variation restricted to the range $r > 2 + \epsilon$ for exponents $p_1,p_2 >1$ that satisfy 
\[ 1 <  \frac{1}{p} \coloneqq \frac{1}{p_1} + \frac{1}{p_2} < 1 + \epsilon', \]
where $\epsilon' > 0$ is sufficiently small in terms of $\epsilon$, hence going beyond the duality range.

The single-scale estimate
\begin{align}\label{e:smallp} \| A_{N;\Lambda;X}(f,g) \|_{L^p(X)} \lesssim \| f \|_{L^{p_1}(X)}  \| g \|_{L^{p_2}(X)},
\end{align}
anchors the argument; \eqref{e:smallp} follows from H\"{o}lder's inequality and the improving estimate Lemma \ref{lp-improv}, as per \cite[Lemma 11.1]{KMT0}. With \eqref{e:smallp} in hand, the proof of \cite[Proposition 11.4]{KMT0} can be formally reproduced, with only notational changes arising. We leave the details to the interested reader.

\subsection{Sharpness of the variational result}\label{sec:sharpness}
The unboundedness of the quadratic variation along polynomial orbits, namely \cite[Proposition 12.1]{KMT0}, extends to our context.
\begin{proposition}
    Let $P \in \Z[\n]$ be a non-constant polynomial, and let $0 < p \leq \infty$. Let $I \subset \mathbb{N}$ be an infinite set. Then for every $C > 0$ there exists a measure-preserving system $(X,\mu,T)$ of total measure 1 and a $1$-bounded $f \in L^{\infty}(X)$ so that
    \[ \| \big( \mathbb{E}_{p \in [N]} T^{P(p)} f \big)_{N \in I} \|_{L^p(X;V^2)} \geq C. \]
        \end{proposition}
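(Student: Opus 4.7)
We adapt the strategy of~\cite[Proposition 12.1]{KMT0}. Take $X=\R/\Z$ with Lebesgue measure, $Tx=x+\alpha$ for an $\alpha$ to be chosen, and $f(x)=e(x)$. Then $f$ is $1$-bounded, and
\[
\E_{p\in[N]} f(T^{P(p)}x)=e(x)\cdot m_N(\alpha),\qquad m_N(\alpha)\coloneqq \frac{1}{\pi(N)}\sum_{p\leq N}e(P(p)\alpha),
\]
where $\pi(N)$ denotes the prime-counting function.  Since $|e(x)|\equiv 1$ and $X$ has unit measure, the claim reduces to producing $\alpha\in\R/\Z$ for which the scalar quadratic variation $\|(m_N(\alpha))_{N\in I}\|_{V^2}$ is at least $C$.

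The central input is the standard major/minor-arc dichotomy for prime exponential sums.  On a major arc $\alpha=a/q+\beta$ with $(a,q)=1$, $q$ bounded, and $|\beta|\lesssim N^{-d}$, the Siegel--Walfisz theorem yields
\[
m_N(\alpha)=\frac{1}{\varphi(q)}\Big(\sum_{r\in(\Z/q\Z)^\times}\!\! e(aP(r)/q)\Big)\int_0^1 e(\beta P(Nt))\,dt+O_A(\langle \Log N\rangle^{-A}),
\]
whose leading factor is a nonzero constant, bounded below in absolute value by some $\delta_P>0$, for suitable $(a,q)$ depending only on $P$.  Conversely, for $\alpha$ at distance at least $N^{-d+\eps}$ from every rational with denominator $\leq N^{\eps}$, Vinogradov's exponential sum bound for primes gives $|m_N(\alpha)|\lesssim N^{-c_P}$ for some $c_P>0$.

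Given $C$, fix an integer $k\gtrsim C^2/\delta_P^2$ and choose a rapidly lacunary subsequence $N_1<N_2<\cdots<N_{2k}$ of $I$ satisfying $N_{j+1}\geq N_j^{C'}$ for some large constant $C'=C'(d,P)$, which is possible since $I$ is infinite.  A nested-interval construction then yields an $\alpha\in\R/\Z$ obeying
\[
|m_{N_{2j-1}}(\alpha)|\geq \delta_P \qquad\text{and}\qquad |m_{N_{2j}}(\alpha)|\leq \delta_P/2\qquad (j=1,\ldots,k).
\]
At step $j$, the major-arc condition at scale $N_{2j-1}$ confines $\alpha$ to an arc $B_j$ of width $\sim N_{2j-1}^{-d}$ around a suitable rational $a_j/q_j$ (chosen inside the arc $B_{j-1}$ already selected); within $B_j$, the minor-arc condition at scale $N_{2j}$ removes only a subset of measure $O(N_{2j-1}^{-d}\cdot N_{2j}^{-c})\ll |B_j|$, provided the lacunarity is chosen large enough, where we use $d\geq 2$ to handle the excluded neighbourhoods of rationals distinct from $a_j/q_j$.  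Iterating over $j$ gives a nested sequence of arcs whose intersection contains the desired $\alpha$. With such $\alpha$,
\[
\|(m_N(\alpha))_{N\in I}\|_{V^2}^2\geq \sum_{j=1}^{k}\bigl|m_{N_{2j}}(\alpha)-m_{N_{2j-1}}(\alpha)\bigr|^2\geq k(\delta_P/2)^2>C^2,
\]
as required.

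The principal technical step is verifying the compatibility of the major-arc and minor-arc conditions across the chosen scales, which is handled by the lacunarity condition on $(N_j)$; the remaining ingredients---reduction to the scalar problem via rotations and eigenfunctions, and the Siegel--Walfisz and Vinogradov dichotomy for prime exponential sums---are classical.
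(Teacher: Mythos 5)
The construction you propose cannot work, because for a \emph{fixed} $\alpha$ the quantity $m_N(\alpha)=\mathbb{E}_{p\in[N]}e(P(p)\alpha)$ cannot alternate between being bounded below and bounded above in the manner you require. Concretely, the set of $\alpha$ for which $|m_N(\alpha)|\geq \delta_P$ is (up to negligible error) contained in a union of arcs $|\alpha-a/q|\lesssim N^{-d}$ over rationals $a/q$ of bounded denominator $q\lesssim_P 1$; as $N$ increases these arcs shrink and are \emph{nested}. So if $\alpha$ lies in such an arc at scale $N_{2j+1}$, then $\alpha$ automatically lies in the corresponding (wider) arc at the smaller scale $N_{2j}$, and $|m_{N_{2j}}(\alpha)|$ is again $\geq\delta_P$, contradicting the requirement $|m_{N_{2j}}(\alpha)|\leq\delta_P/2$. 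Your nested-interval iteration surfaces this problem explicitly: after step $1$ you restrict to an arc $B_1$ of width $\sim N_1^{-d}$ around $a_1/q_1$, then excise the sub-arc of width $\sim N_2^{-d+\eps}$ around $a_1/q_1$; but the set that remains contains \emph{no} rational of bounded denominator (two distinct rationals with denominators $O_P(1)$ are separated by a fixed constant $\gg N_1^{-d}$), so step $3$ — selecting $a_2/q_2$ with $q_2\lesssim_P 1$ inside $B_1\setminus(\text{excised arc})$ — is impossible. Equivalently: with a single eigenfunction on $\mathbb{T}$, the sequence $N\mapsto m_N(\alpha)$ has one essential transition (from the major-arc value to $0$) and bounded $V^2$ variation, so it cannot produce variation $\geq C$ for large $C$.

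This is precisely why the paper's sketch works on a higher-dimensional torus $\mathbb{T}^K$, using the equidistribution of $p\mapsto(\alpha_1 P(p),\dots,\alpha_K P(p))$ for $\mathbb{Q}$-linearly independent $\alpha_1,\dots,\alpha_K$ (established via Weyl's criterion and prime exponential sum estimates, as in~\cite[Theorem 1.3]{mat1}), following the scheme of~\cite[Proposition 12.1]{KMT0}. One needs genuinely many frequencies — each contributing a transition at a different scale — rather than a single $\alpha$ with alternating major/minor behaviour, which does not exist. A secondary issue: your argument invokes $d\geq 2$ to control the excluded neighbourhoods, but the proposition only assumes $P$ is non-constant, so the case $d=1$ must also be covered.
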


We shall leave the details of the proof of this proposition to the interested reader as it is similar to the proof of \cite[Proposition 12.1]{KMT0}. The key additional observation is the equidistribution of
\[ p \mapsto \big( \alpha_1 \cdot P(p),\dots,\alpha_K \cdot P(p) \big) \subset \mathbb{T}^K \]
over the primes whenever $\alpha_1,\dots,\alpha_K$ are $\mathbb{Q}$-linearly independent, and $P \in \Z[\n]$ is a non-constant polynomial (which follows from Weyl's criterion and a standard exponential sum estimate for polynomials of primes; see e.g.~\cite[Theorem 1.3]{mat1}). 
        
To see why this implies the sharpness of the range of the variational estimate in Theorem~\ref{main-thm}, one may employ the convexity arguments of \cite[\S 5]{MTZK}, taking into account \cite[Proposition 4.1]{MTZK}, to obtain the lower bound
\[ \| \big( \mathbb{E}_{p \in [N]} T^{P(p)} f \big)_{N \in I} \|_{L^p(X;V^2)} \leq \| \big( \mathbb{E}_{n \in [N]} \Lambda(n) \cdot T^{P(n)} f \big)_{N \in I} \|_{L^p(X;V^2)} + O(1). \]

\subsection{Continuous Extensions}
From the perspective of density, the primes are ``full\-/dimensional", with a very ``Fourier-uniform" measure, $\Lambda$. A natural question concerns establishing a continuous analogue of Theorem \ref{main-thm}, namely the existence of a measure $\nu$ supported on $[0,1]$, with (say) full Fourier dimension,
\[ |\hat{\nu}(\xi)| \lesssim (1 + |\xi|)^{o(1)-1/2}\]
so that
\[ \lim_{N \to \infty} \frac{1}{N} \int_0^N f(x-t) g(x-P(t)) \ d\nu(t), \; \; \; d = \text{deg}(P) \geq 2 \]
exists almost everywhere whenever $f \in L^{p_1}(\mathbb{R})$ and $g \in L^{p_2}(\mathbb{R})$ with $p_1,p_2 > 1$ and $\frac{1}{p_1} + \frac{1}{p_2} \leq 1$. The key point is establishing a suitable Sobolev inequality, namely
\[ \Big\| \frac{1}{N} \int_0^N f(x-t) g(x-P(t)) \ d\nu(t) \Big\|_{L^1([0,CN^d])} \lesssim \big(2^{-cl} + O_A(\langle \log N \rangle^{-A} )\big)  N^d\]
for some $c > 0$, whenever $|f|, |g| \leq 1$, and $\hat{f}$ vanishes on $\{ |\xi| \lesssim 2^{l}/N \}$ and/or $\hat{g}$ vanishes on $\{ |\xi| \lesssim 2^l/N^d\}$.

Estimates of this form in the unweighted setting go back to \cite{BRoth}, with the strongest estimates recently established by one of us as part of a much more general phenomenon, see \cite{KMPW}. This approach relies on PET induction, which suggests that certain Gowers-uniformity conditions might need be imposed on $\nu$; it is unclear how this might interact with dimension, so we leave the problem to the interested reader.

\end{document}